\documentclass[12pt,a4paper]{article}

\usepackage[nottoc]{tocbibind}
\usepackage{preamble}
\newcommand{\usualHilb}{{\ccH}ilb}
\DeclareMathOperator{\sat}{sat}

 \usepackage{vmargin}
 \setmargrb{1in}{0.7in}{1in}{0.7in} 

\title{Apolarity, border rank and multigraded Hilbert scheme}
\author{\nisiabu \and \JaBu}

\date{17th October 2020}

\newenvironment{red}{\color{red}}{}
\newcommand{\bred}{\begin{red}}
\newcommand{\ered}{\end{red}}

\newenvironment{blue}{\color{blue}}{}
\newcommand{\bblue}{\begin{blue}}
\newcommand{\eblue}{\end{blue}}

\newenvironment{green}{\color{green}}{}
\newcommand{\bgreen}{\begin{green}}
\newcommand{\egreen}{\end{green}}

\usepackage[textsize=tiny]{todonotes}

\newcommand{\bVSP}{\underline{\mathbf{VSP}}}
\newcommand{\Irrel}[1]{\operatorname {Irrel}_{#1}}
\newcommand{\rank}[2][X]{\operatorname{r}_{#1}(#2)}
\newcommand{\borderrank}[2][X]{\operatorname{br}_{#1}\!\left(#2\right)}

\newcommand{\apolar}[1][F]{\operatorname{Ann}\!\left(#1\right)}

\DeclareMathOperator{\Slip}{Slip}
\newcommand{\sH}{\operatorname{H}}
\DeclareMathOperator{\Sip}{Sip}

\DeclareMathOperator{\idealof}{I}
\DeclareMathOperator{\zeroscheme}{Z}
\DeclareMathOperator{\Lex}{Lex}

\newcommand{\grevlex}{\texttt{grevlex}}

\definecolor{DziubkaDyniowy}{RGB}{224,104,19}
\definecolor{geometria}{RGB}{170,67,6} 
\definecolor{DziubkaBrazA}{RGB}{140,64,3}
\definecolor{algebra}{RGB}{75,135,10} 

\begin{document}

\maketitle
\begin{abstract}
We introduce an elementary method to study the border rank
of polynomials and tensors, analogous to the apolarity lemma.
This can be used to describe the border rank of all cases uniformly, including those very special ones that resisted a systematic approach.
We also define a border rank version of the variety of sums of powers and analyse its usefulness in studying tensors and polynomials with large symmetries.
In particular, it can be applied to provide lower bounds for the border rank of some  interesting tensors, such as the matrix multiplication tensor.
We work in a general setting, where the base variety is not necessarily a Segre or Veronese variety, but an arbitrary smooth toric projective variety.
A critical ingredient of our work is an irreducible component of a multigraded Hilbert scheme related to the toric variety in question.
\end{abstract}


\medskip
{\footnotesize
\noindent\textbf{addresses:} \\
W.~Buczy\'nska, \eemail{wkrych@mimuw.edu.pl}, 
   Faculty of Mathematics, Computer Science and Mechanics, University of Warsaw, ul. Banacha 2, 02-097 Warsaw, Poland\\
J.~Buczy\'nski, \eemail{jabu@mimuw.edu.pl}, 
   Institute of Mathematics of the Polish Academy of Sciences, ul. \'Sniadeckich 8, 00-656 Warsaw, Poland

\medskip

\noindent\textbf{keywords:} 
Border rank, apolarity, multigraded Hilbert scheme, variety of sums of powers, invariant ideals, Cox ring.

\noindent\textbf{AMS Mathematical Subject Classification 2010:}
Primary 14C05; Secondary 14M25, 15A69, 68Q17.
}

\tableofcontents
\section{Introduction}

The majority of the techniques used in this article 
  originate in algebraic geometry.
This introduction is meant to be accessible also to researchers in applications.  
  
Tensor rank, Waring rank or their common generalisation called partially symmetric rank are of interest to mathematicians due to their many applications to computational complexity, quantum physics, and algebraic statistics, but also due to their geometric and algebraic interpretations. 
Explicitly, we consider one of the classical projective varieties $X\subset \PP^N$: the Segre variety (for tensor rank) consisting of simple tensors, the Veronese variety (for Waring rank) consisting of powers of linear forms, or the Segre-Veronese variety (for partially symmetric rank), which is a combination of the two above.
In all these cases, abstractly, 
$X \simeq \PP^{a} \times \PP^{b}\times \PP^c \times
   \dotsb$, 
   that is, $X$ is a finite product of projective spaces.
The embedding $X\subset \PP^N$ depends on the symmetries 
   of the tensor and determines a sequence of degrees 
   $L= (l_1, l_2, l_3, \dotsc)$ of the same length as the number of factors of $X$.
Then the rank of $F$, which is, respectively, a tensor, a homogeneous polynomial, or a partially symmetric tensor, is defined as the minimal integer $r$ such that $F$ is in the linear span of $r$ distinct points of $X$.

One of the methods to obtain values of rank 
  for specific tensors or polynomials that has been shown 
  to be very useful is the method of apolarity.
It exploits a multigraded polynomial ring 
  \[
    S[X]= \CC[\fromto{\alpha_0, \alpha_1}{\alpha_a},
              \fromto{\beta_0}{\beta_b},
              \fromto{\gamma_0}{\gamma_c},\dotsc
              ], 
  \]
  which is graded by as many copies of $\ZZ$ 
  as there are factors of the projective space in $X$:
  all $\alpha_i$'s have multidegree $(1,0,0,\dotsc)$,
  all $\beta_j$'s have multidegree $(0,1,0,\dotsc)$,
  and so on.
  (In the Veronese case, that is, $X\simeq \PP^a$, 
    $S[X]$ is just the standard homogeneous coordinate ring 
    of $\PP^a$.)
  The ring $S[X]$ has two dual interpretations that are illustrated
    in diagram \eqref{equ_duality_of_apolarity} below.
  The first interpretation is more geometric: 
    the elements of $S[X]$ are thought of as ``functions''
    on $X$, which makes $S[X]$ into a kind of a multigraded coordinate ring of $X$.
  Strictly speaking, those functions are sections of line bundles on $X$,
    and 
    \begin{align*}
      S[X] &= \bigoplus_{D\in \Pic(X)} H^0(X,D) = \bigoplus_{D\in \Pic(X)} S[X]_D \\
           &= \bigoplus_{(d_1, d_2,d_3,\dotsc)} H^0\left(\PP^a\times \PP^b \times \PP^c\times \dotsc,\ccO_{\PP^a\times \PP^b \times \PP^c\times \dotsc}(d_1, d_2,d_3,\dotsc)\right).
    \end{align*}
  Here $D=(d_1, d_2, d_3, \dotsc)\in \Pic(X)$ and $S[X]_D$ is the $D$-th graded piece of $S[X]$, that is, the space of polynomials of multidegree $(d_1, d_2, d_3, \dotsc)$.
  The second, more algebraic, interpretation of $S[X]$
     is in terms of derivations.
  In this interpretation
    the variables of $S[X]$ can be seen as derivations 
    of $F \in \PP^N= \PPof{S[X]_{(l_1, l_2, l_3,\dotsc)}}^*$. 
  Then $F$ determines a multihomogeneous ideal $\apolar\subset S[X]$, which is responsible for many algebraic properties 
    of $F$ and other objects constructed from $F$ 
(see for instance~\cite{iarrobino_kanev_book_Gorenstein_algebras}).
Explicitly, 
$\apolar$ is the set of all polynomial differential operators 
  with constant coefficients that annihilate $F$.
The symbol $\hook$ is used to denote the action of  derivations on tensors.
See Section~\ref{sec_apolarity_for_rank} for a more formal definition 
  and its reinterpretations.

  The very essence of this article is to expand the  apolarity theory, which in its simplest incarnation 
  can be seen as the following 
  duality between geometry and algebra.
\begin{equation}\label{equ_duality_of_apolarity}
\arraycolsep=1pt\def\arraystretch{1.4}
\begin{array}[b]{rlcrl}
    & \textbf{\color{geometria}Geometry} &&\textbf{\color{algebra}Algebra}\\
    \color{geometria}\setfromto{p_1}{p_r} &\color{geometria}\subset X \subset \PP\left(H^0(L)^*\right) & =&
     \PP{\color{algebra}\left(S[X]_L\right)^*}\ni & F\\
    \idealof(\setfromto{p_1}{p_r}) &\subset  \textstyle{\bigoplus_{D\in \Pic(X)}}  \color{geometria}H^0(D) & = & \color{algebra} S[X]   \supset& \color{algebra}\apolar \\
    \parallel \qquad \ &&&&\qquad \parallel \\
    \bigl(\Theta \mid \Theta \in H^0(D),& \forall_i  \ \Theta(p_i) =0 \bigr)   &&\color{algebra}\bigl(\Theta \mid&\color{algebra}\Theta \in S[X]_D , \Theta\hook F = 0 \bigr) \\
    \\
\quad\quad F &\in \color{geometria}\langle\{p_1,\dotsc, p_r\}\rangle &\Longleftrightarrow&\quad \idealof(\{ p_1,\dotsc, p_r\})&\subset \color{algebra}\apolar . 
\end{array}
\end{equation}
The left hand side of the diagram (in particular, the {\color{geometria}brown coloured} bits) represents geometric objects such 
  as the Segre-Veronese variety, projective space, points, line bundles and their sections, and linear span.
The right hand side (especially, the stuff {\color{algebra}in green}) contains algebraic objects: 
  the polynomial ring $S[X]$,
  the apolar ideal of differentials annihilating~$F$.
The equivalence in the bottom line is called the Apolarity Lemma 
  (see Proposition~\ref{prop_multigraded_apolarity}).

Despite the equivalence of the Apolarity Lemma, it is in general 
  difficult to 
  obtain the exact value of rank for many explicit tensors, 
  or to describe explicitly the stratification of the projective space 
  $\PP^N$ by rank.
One of the reasons is that the rank is not semicontinuous, as many standard examples show.
Thus often more natural for calculations, and also applications, are the notions of secant variety and border rank: 
\begin{itemize}
 \item the $r$-th \emph{secant variety} of $X \subset \PP^N$ is
 $
   \sigma_r(X) = \overline{\bigcup \set{ \linspan{\fromto{p_1}{p_r}} 
     \mid p_i\in X}} \subset \PP^N,
 $
 \item the \emph{border rank} of $F$ is 
$\borderrank{F} = \min \set{r \in \ZZ \mid F \in \sigma_r(X)}$.
\end{itemize}
The border rank is lower semicontinuous and the secant varieties are algebraic subsets of $\PP^N$.
One of the major problems 
  is to estimate the growth of the border rank of tensors representing 
  matrix multiplication of large matrices
 \cite{landsberg_geometry_and_complexity}.
  
One of the missing pieces in this area was the analogue of apolarity theory for border rank, and it is the topic of this article.
\begin{thm}[Weak border apolarity]
    \label{thm_border_apolarity_intro}
    Suppose a tensor or polynomial $F$ has border rank at most $r$.
    Then there exists a (multi)homogeneous ideal $I \subset S[X]$ such that:
    \begin{itemize}
     \item $I\subset \apolar$,
     \item for each multidegree $D$ the $D$-th graded piece $I_D$ of $I$
           has codimension (in $S[X]_D$) equal to $\min(r, \dim S[X]_D)$.
    \end{itemize}
    In addition, if $G$ is a group acting on $X$ and preserving $F$, 
      then there exists an $I$ as above which in addition is invariant under 
      a Borel subgroup of $G$.
\end{thm}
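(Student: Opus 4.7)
My plan is to obtain the ideal $I$ as a limit inside a suitable multigraded Hilbert scheme, starting from the vanishing ideals of $r$ general points of $X$. Set $h_{r}(D) := \min\bigl(r,\,\dim_{\CC}S[X]_D\bigr)$ for $D\in\Pic(X)$, and let $\mathcal{H}^{h_{r}}$ denote the multigraded Hilbert scheme of Haiman--Sturmfels parametrising multihomogeneous ideals $I\subset S[X]$ with $\dim(S[X]/I)_D=h_{r}(D)$ in every multidegree. The key structural input I need is that $\mathcal{H}^{h_{r}}$ is projective, not merely quasi-projective; since $h_{r}$ is bounded by $r$, this is exactly the sort of statement that the preparatory sections of the paper should establish in the toric setting, and it is precisely this properness that will permit the limit argument.

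I then form the closed correspondence
\[
\mathcal{Z} := \bigl\{\,(I,[F'])\in \mathcal{H}^{h_{r}}\times\PP(S[X]_L^{*}) \;:\; I\subset\apolar[F']\,\bigr\}.
\]
The defining inclusion is equivalent to the vanishing, in every multidegree $D$ with $L-D$ effective, of the composition $I_D\hookrightarrow S[X]_D\xrightarrow{\;\hook F'\;}S[X]_{L-D}^{*}$; this is linear in the universal family, so $\mathcal{Z}$ is closed. Properness of $\mathcal{H}^{h_{r}}$ then makes the image $V := \pi_{2}(\mathcal{Z})\subset\PP(S[X]_L^{*})$ closed.

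To show $V\supseteq\sigma_{r}(X)$, I would exhibit a dense open subset of the image of the summation map $\Sigma\colon X^{r}\dashrightarrow\PP(S[X]_L^{*})$ inside $V$. For $(p_{1},\ldots,p_{r})\in X^{r}$ general, the points are pairwise distinct and each evaluation $S[X]_D\to\CC^{r}$ attains its maximal rank $\min(r,\dim S[X]_D)$; consequently the vanishing ideal $J := I(\{p_{1},\ldots,p_{r}\})$ has Hilbert function exactly $h_{r}$ and defines a point of $\mathcal{H}^{h_{r}}$. Writing $F' := \Sigma(p_{1},\ldots,p_{r})$, the Apolarity Lemma (Proposition~\ref{prop_multigraded_apolarity}) supplies $J\subset\apolar[F']$, so $(J,[F'])\in\mathcal{Z}$ and $[F']\in V$. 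Since $\sigma_{r}(X)=\overline{\mathrm{im}(\Sigma)}$ and $V$ is closed, we get $\sigma_{r}(X)\subseteq V$, which is the first assertion. For the equivariant refinement, a $G$-action on $X$ induces compatible $G$-actions on $S[X]$, on $\mathcal{H}^{h_{r}}$, on $\PP(S[X]_L^{*})$, and hence on $\mathcal{Z}$. If $G$ stabilises $[F]$, then the fibre $\pi_{2}^{-1}([F])\subset\mathcal{H}^{h_{r}}$ is non-empty (by the previous step), closed, and $G$-invariant, hence a projective $G$-scheme; Borel's fixed point theorem applied to a Borel subgroup $B\subseteq G$ then produces a $B$-fixed point of that fibre, i.e.\ the desired Borel-invariant ideal $I\subset\apolar$.

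The hardest step I anticipate is the simultaneous maximal-rank claim: that for $r$ general points of $X$ the ideal $J$ realises the Hilbert function $h_{r}$ in \emph{every} multidegree at once. In each individual $D$ this is a standard open condition on $X^{r}$, but $\Pic(X)$ can be infinite, so one has to either exploit the fact that the bound $h_{r}\leq r$ forces stabilisation, leaving effectively finitely many multidegrees of interest, or appeal to a Baire-type argument on the irreducible variety $X^{r}$. The second delicate ingredient is the projectivity of $\mathcal{H}^{h_{r}}$: Haiman--Sturmfels only give quasi-projectivity in general, and upgrading this to properness uses the boundedness of $h_{r}$ together with the specific multigraded Cox-ring features of $S[X]$ investigated elsewhere in the paper.
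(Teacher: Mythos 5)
Your proof is correct, and it reaches Theorem~\ref{thm_border_apolarity_intro} by a genuinely more economical route than the paper. Where you work directly with the full multigraded Hilbert scheme $\mathcal{H}^{h_r}$, the paper first carves out a distinguished irreducible component $\Slip_{r,X}$ (the ``slip''), which requires Proposition~\ref{prop_motivation_for_h_r_X} to identify the unique component containing all the ideals of $r$ general points, and then runs the incidence-variety argument (Lemma~\ref{lem_strong_nonsaturated_apolarity}) over that component. The payoff of the paper's extra work is the stronger Theorem~\ref{thm_nonsaturated_apolarity}, which is an ``if and only if'': $\borderrank{F}\leqslant r$ \emph{iff} some $I\in\Slip_{r,X}$ satisfies $I\subset\apolar$. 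Your construction gives a set $V$ that may well contain points of border rank $>r$ (roughly, a border cactus variety), so the converse implication fails for it; but the weak statement only needs the forward direction, and your $\sigma_r(X)\subseteq V$ plus properness plus Borel's fixed point theorem on the fibre $\pi_2^{-1}([F])$ is a clean proof. On your two flagged worries: (i) the projectivity of $\mathcal{H}^{h_r}$ is not a gap --- Haiman--Sturmfels (Thm.~1.1 and Cor.~1.2) gives projectivity, not merely quasi-projectivity, whenever the grading is \emph{positive} (the only monomial of degree $0$ is $1$), which holds for the Cox ring of a smooth projective toric variety; boundedness of $h_r$ plays no extra role. (ii) The simultaneous maximal-rank claim over all $D\in\Pic(X)$ is exactly the paper's Lemma~\ref{lem_hilb_function_of_very_general_tuple}, and the fix is precisely the Baire-category argument you anticipated (very general over $\CC$ implies dense), so that part of your plan is sound as stated.
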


A stronger version of the first part of the theorem is presented in 
  Theorem~\ref{thm_nonsaturated_apolarity}. 
It involves more conditions on the ideal $I$, 
  and  the claim is an ``if and only if'' statement.
The second part is explained in the  Fixed Ideal Theorem
(Theorem~\ref{thm_G_invariant_bVSP}).

More explicitly, in 
  Sections~\ref{sec_multihomogeneous_coordinates} and \ref{sec_apolarity} 
  we construct a projective algebraic variety, 
  called a \emph{slip (Scheme of Limits of Ideals of Points)},
  parameterising all multigraded ideals in $S[X]$ 
  that are relevant to the construction of secant varieties.
Moreover the slip contains a dense subset of ideals representing
  $r$ distinct points of $X$.
  
The definition of the secant variety involves the closure.
It would be nice to ``get rid'' of this closure in order 
  to have a uniform description of points in the secant variety.
Such a uniform description is classical for $r=2$
(the first non-trivial secant variety), where for each point $p \in \sigma_2(X)$, either $p$ is of rank at most $2$ (that is, $p$ is on $X$ or on an honest secant line joining two distinct points of $X$), or $p$ is on a tangent line to $X$.
For tensors of border rank $3$ the situation starts
   to be  more complicated~\cite{landsberg_jabu_third_secant}.
Describing the fourth secant variety in general seemed hopeless so far, except in the case of the Veronese variety
   \cite{landsberg_teitler_ranks_and_border_ranks_of_symm_tensors}, 
   \cite{bernardi_gimigliano_ida}, \cite{ballico_bernardi_4th_secant_of_Veronese}.
Initial attempts involved spans of \emph{finite smoothable schemes}; 
  see \cite{jabu_jeliesiejew_finite_schemes_and_secants} 
  for an overview.
That is, 
\[
  \sigma_r(X) = \overline{\bigcup \set{ \linspan{R} 
     \mid R\subset  X, R \text{ is a finite smoothable scheme of length }\leqslant r}}.
\]
Roughly, a set $\setfromto{p_1}{p_r}$ of $r$ distinct points  
  is a smooth scheme of length $r$, 
  and a smoothable scheme is a limit
  (in the sense of algebraic geometry) 
  of such a collection of $r$ points.
This approach helps significantly 
  to get rid of the closure in several cases,
  but it does not work in general, as discussed 
  in \cite{bernardi_brachat_mourrain_comparison}
  and \cite{nisiabu_jabu_smoothable_rank_example}.
We briefly review two relevant examples 
  in Subsections~\ref{sec_tensor_of_border_rank_3} 
  and \ref{sec_wild_cubic}.
  
The method we propose works in all cases.
Our naturally constructed slip 
  (scheme of limits of ideals of points) 
  is a parameter space 
  for all possible limits that appear when considering 
  the closure in the definition of the secant variety.
Also, given a tensor or polynomial $F$ 
  one can define the  projective variety of all solutions 
  to the border rank problem, 
  by analogy to \emph{$\mathit{VSP}$, or Varieties of Sums of Powers} 
  (see Section~\ref{sec_VSP} for a discussion and references).
Then, again, with our approach the closure is not needed to 
  define the border version of $\mathit{VSP}$, denoted $\bVSP$.
This makes it possible to heavily exploit the group actions 
  on $\bVSP$ and in cases of tensors with large groups of symmetries, one can sometimes reduce the problem of determining the border rank to a problem of checking a finite collection
  of ideals, or several families of small dimension.

In Sections~\ref{sec_applications}--\ref{sec_monomials}
   we review applications of our method.
After an initial discussion of three previously known results 
  from the perspective of border apolarity, 
  we present two new applications.

The first application concerns tensors of minimal border rank.
It is a necessary criterion for such tensors that seems 
  to be different from existing criteria.
\begin{thm}\label{thm_minimal_border_rank_intro}
   Suppose $F\in \CC^n\otimes \CC^n\otimes \CC^n$ is a concise tensor of border rank $n$. 
   Then the multigraded ideal $\apolar \subset S[X]$
     has at least 
     $n-1$ linearly independent minimal generators 
     in degree $(1,1,1)$.
\end{thm}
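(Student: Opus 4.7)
The plan is to apply the Weak Border Apolarity Theorem (Theorem~\ref{thm_border_apolarity_intro}) with $r=n$ to obtain a multihomogeneous ideal $I \subseteq \apolar$ whose graded pieces are forced to be small, and then to compare $\dim I_D$ with $\dim \apolar_D$ in the degrees that feed into $(1,1,1)$.

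Write $V_1, V_2, V_3 \cong \CC^n$ for the three tensor factors. First I would use conciseness of $F$ to compute the relevant pieces of $\apolar$. Since $F$ is concise, each flattening $V_i^* \to V_j \otimes V_k$ is injective, so dually each contraction map $V_j^* \otimes V_k^* \to V_i$ (which is exactly $\Theta \mapsto \Theta \hook F$ in the corresponding bidegree) is surjective. Hence $\apolar_{(1,0,0)} = \apolar_{(0,1,0)} = \apolar_{(0,0,1)} = 0$, while $\dim \apolar_{(1,1,0)} = \dim \apolar_{(1,0,1)} = \dim \apolar_{(0,1,1)} = n^2 - n$. The piece $\apolar_{(1,1,1)}$ is the kernel of the single linear functional $\Theta \mapsto \Theta \hook F$ on $V_1^* \otimes V_2^* \otimes V_3^*$, hence has dimension $n^3 - 1$.

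Next, Theorem~\ref{thm_border_apolarity_intro} supplies $I \subseteq \apolar$ with $\dim I_D = \dim S[X]_D - \min(n, \dim S[X]_D)$ in every multidegree. In particular $\dim I_{(1,1,0)} = n^2 - n = \dim \apolar_{(1,1,0)}$, and because $I_{(1,1,0)} \subseteq \apolar_{(1,1,0)}$ these two spaces must coincide; likewise in the permuted bidegrees. Moreover $\dim I_{(1,1,1)} = n^3 - n$. Because $\apolar$ vanishes in the three degrees of total weight one, the subspace of $\apolar_{(1,1,1)}$ generated by elements of strictly smaller multidegree is exactly
\[
  W \;=\; S[X]_{(0,0,1)} \cdot \apolar_{(1,1,0)} + S[X]_{(0,1,0)} \cdot \apolar_{(1,0,1)} + S[X]_{(1,0,0)} \cdot \apolar_{(0,1,1)}.
\]
Using the identifications $\apolar_{(1,1,0)} = I_{(1,1,0)}$ and its permutations, together with the fact that $I$ is an ideal, we obtain $W \subseteq I_{(1,1,1)}$, so $\dim W \leq n^3 - n$. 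The number of minimal generators of $\apolar$ in degree $(1,1,1)$ equals $\dim \apolar_{(1,1,1)} - \dim W \geq (n^3-1) - (n^3-n) = n-1$, as required.

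All the genuine content sits inside Theorem~\ref{thm_border_apolarity_intro}: the bound $\dim I_{(1,1,1)} \leq n^3 - n$ is exactly what forces $W$ to be small, and without border apolarity there would be no a priori reason to expect the extra $n-1$ generators. Beyond invoking that theorem, the argument is pure dimension bookkeeping, so no further obstacle is anticipated.
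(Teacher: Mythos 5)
Your proof is correct and follows essentially the same route as the paper's: the paper specializes its Lemma~\ref{lem_minimal_border_rank} (stated there via the symmetry of the Hilbert function of $S/\apolar$, whereas you compute $\dim\apolar_{(1,1,0)} = n^2-n$ directly from the surjectivity of the concise flattening, an equivalent observation) to obtain $\apolar_{(1,1,0)} = I_{(1,1,0)}$ and its permutations, and then does precisely your codimension bookkeeping in degree $L = (1,1,1)$. The only cosmetic difference is that the paper proves the more general statement for $(\PP^a)^{\times w}$ (Theorem~\ref{thm_minimal_border_rank_product_of_Pa_s}) and deduces the $3\times 3\times 3$ case, while you argue directly in the special case.
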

This statement in a more general setting (for partially symmetric tensors) is  
  Theorem~\ref{thm_minimal_border_rank_product_of_Pa_s}, 
  and a related result in a similar direction is  
  Theorem~\ref{thm_minimal_border_rank_general_case}.

As the second application 
we calculate the border rank of monomials
 in $S^{d_1}\CC^3 \otimes S^{d_2}\CC^2 \otimes S^{d_3}\CC^2\otimes \dotsb \otimes S^{d_k}\CC^2$.
\begin{thm}\label{thm_monomials_intro}
   Suppose 
   \[
      F= x_0^{a_0}x_1^{a_1}x_2^{a_2}\otimes 
        y_0^{b_0}y_1^{b_1}\otimes \dotsm
        \otimes z_0^{c_0}z_1^{c_1} \in 
      S^{d_1}\CC^3 \otimes S^{d_2}\CC^2 \otimes 
        \dotsb \otimes S^{d_k}\CC^2
   \]
   for $a_0\geqslant a_1\geqslant a_2$ and $b_0\geqslant b_1$, \dots,  
        $c_0\geqslant c_1$.
   Then
   \[
     \borderrank[\PP^2\times \PP^1 \times \dotsb \times \PP^1]{F}= 
      (a_1 +1)(a_2+1)(b_1+1)\dotsm(c_1+1).
   \]
\end{thm}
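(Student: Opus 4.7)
The plan is to prove matching upper and lower bounds on $\borderrank[X]{F}$. The upper bound is constructive, while the lower bound uses the $G$-invariant form of weak border apolarity from Theorem~\ref{thm_border_apolarity_intro} to reduce to a combinatorial problem about monomial ideals in $\apolar$.

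For the upper bound, factor $F$ as $F = F_0 \otimes F_1 \otimes \dotsm \otimes F_{k-1}$, where $F_0 = x_0^{a_0}x_1^{a_1}x_2^{a_2}$ and each $F_i$ ($i \geq 1$) is a two-variable monomial. Border rank is submultiplicative under the Segre embedding: writing $X_0 = \nu_{d_1}\PP^2$ and $X_i = \nu_{d_{i+1}}\PP^1$ for the individual Veronese factors,
$$
    \borderrank[X]{F} \leq \borderrank[X_0]{F_0} \cdot \prod_{i\geq 1}\borderrank[X_i]{F_i}.
$$
For each $\PP^1$ factor, a Sylvester-style one-parameter family of $b_1+1$ distinct $(b_0{+}b_1)$-th powers of linear forms degenerates to $y_0^{b_0}y_1^{b_1}$ after rescaling, giving $\borderrank[X_i]{F_i} \leq b_1+1$. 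For the $\PP^2$ factor, an explicit family of $(a_1+1)(a_2+1)$ rank-one terms indexed by a grid $(i,j) \in [0,a_1]\times[0,a_2]$, with $t$-dependent weights, limits to $F_0$ and gives $\borderrank[X_0]{F_0} \leq (a_1+1)(a_2+1)$. Multiplying these one-parameter families yields the claimed upper bound.

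For the lower bound, suppose $\borderrank[X]{F} = r$ and apply Theorem~\ref{thm_border_apolarity_intro}: there is a multigraded ideal $I \subset \apolar$ with $\operatorname{codim}_{S[X]_D} I_D = \min(r, \dim S[X]_D)$ for every $D$, invariant under a Borel subgroup of the stabilizer of $[F]$. Since $F$ is a monomial, this stabilizer contains the diagonal torus $T \simeq (\CC^*)^{k+1}$; as $T$ is abelian, $T$ is its own Borel, so $I$ is $T$-invariant, hence a monomial ideal in $S[X] = \CC[\alpha_0, \alpha_1, \alpha_2, \beta_0, \beta_1, \dotsc, \zeta_0, \zeta_1]$. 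Moreover $\apolar$ is itself the monomial ideal $(\alpha_0^{a_0+1}, \alpha_1^{a_1+1}, \alpha_2^{a_2+1}, \beta_0^{b_0+1}, \beta_1^{b_1+1}, \dotsc, \zeta_0^{c_0+1}, \zeta_1^{c_1+1})$. The lower bound then reduces to the purely combinatorial statement: any monomial ideal $I \subset \apolar$ satisfying the border-apolarity Hilbert condition forces $r \geq (a_1+1)(a_2+1)(b_1+1)\dotsm(c_1+1)$.

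The main obstacle is this combinatorial statement. For each $\PP^1$ factor, the catalecticant at the single degree $b_1$ already contributes the factor $b_1+1$. The delicate case is the $\PP^2$ factor: when $a_0 < a_1+a_2$, no single graded piece of $S[X]/\apolar$ attains dimension $(a_1+1)(a_2+1)$, so a one-degree argument does not suffice (cf.\ the classical case $F_0 = x_0x_1x_2$, where the border rank is $4$ while every catalecticant has rank at most $3$). The plan is to exploit the ideal-theoretic growth $I_{d+1} \supseteq (\alpha_0, \alpha_1, \alpha_2) I_d$ together with $I \subset \apolar$ to propagate constraints across degrees and show that the stabilized Hilbert polynomial of $S[X]/I$ along the $\PP^2$ factor is at least $(a_1+1)(a_2+1)$. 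Combining with the $\PP^1$ contributions via the multigraded Hilbert function evaluated at $D^* = (N, b_1, \dotsc, c_1)$ with $N \gg 0$, where the monomial ideal structure enables the needed product decomposition, yields $\operatorname{codim}_{S[X]_{D^*}} I_{D^*} \geq (a_1+1)(a_2+1)(b_1+1)\dotsm(c_1+1)$, whence $r$ is at least this product.
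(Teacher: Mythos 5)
Your skeleton matches the paper's: an upper bound by exhibiting $F$ as a limit (the paper uses the smoothable scheme $\zeroscheme(\alpha_1^{a_1+1},\alpha_2^{a_2+1},\beta_1^{b_1+1},\dotsc)$ in Lemma~\ref{lem_monomial_upper_bound_on_general_toric_variety}, and explicitly notes your submultiplicativity route as an alternative), and a lower bound by using the torus symmetry of the monomial to produce a \emph{monomial} ideal $I\subset\apolar$ with $\dim(S/I)_D=\min(r,\dim S_D)$ for all $D$ (this is exactly move-fit, Corollary~\ref{cor_monomials}). Up to that point your reduction is correct.

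The genuine gap is the step you yourself flag as ``the main obstacle'': you reduce to the combinatorial statement that no monomial ideal $I\subset\apolar$ with the generic Hilbert function exists when $r<(a_1+1)(a_2+1)(b_1+1)\dotsm(c_1+1)$, but you do not prove it -- you only announce a plan to ``propagate constraints across degrees'' and read off a contradiction at $D^*=(N,b_1,\dotsc,c_1)$ with $N\gg 0$. Two problems. First, at such $D^*$ the constraint $I\subset\apolar$ is vacuous: once the $\alpha$-degree exceeds $a_0+a_1+a_2$, the piece $\apolar_{D^*}$ is all of $S[X]_{D^*}$, so the Hilbert-function condition there just says $\operatorname{codim}I_{D^*}=r$ and yields no contradiction by itself; asserting that the ``stabilized Hilbert polynomial along the $\PP^2$ factor is at least $(a_1+1)(a_2+1)$'' is logically equivalent to the lower bound you are trying to prove, not a lemma you can invoke. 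Second, the propagation mechanism is precisely the hard content, and the naive use of $I_{D+1}\supseteq I_D\cdot S_1$ goes the wrong way (it bounds the codimension of $I_{D+1}$ from \emph{above} by the growth of $I_D$, so one needs a sharp \emph{lower} bound on that growth). The paper supplies this via a Macaulay-type minimal-growth bound for monomial subspaces of a direct sum of graded pieces (Proposition~\ref{prop_Macaulay_bound_for_disjoint_modules}, built from Lemmas~\ref{lem_moving_ideals_with_lots_of_entries} and~\ref{lem_moving_ideals_with_few_entries}), exploiting that $\apolar_{(a_1+a_2,D)}$ splits as $\alpha_0^{a_0+1}S_{a_1+a_2-a_0-1}\otimes H^0(Y,D)\oplus\alpha_1^{a_1+1}S_{a_2-1}\otimes H^0(Y,D)\oplus\alpha_2^{a_2+1}S_{a_1-1}\otimes H^0(Y,D)$, and the contradiction is extracted at the single pair of consecutive multidegrees $(a_1+a_2,D)$ and $(a_1+a_2+1,D)$ with $D=(b_1,\dotsc,c_1)$ (Theorem~\ref{thm_monomials}, specialised in Example~\ref{ex_monomials_P2xP1xP1___}): the required codimension $r$ at degree $(a_1+a_2,D)$ forces $\operatorname{codim}(I\cdot S[\PP^2]_1)\leqslant r-2$ one degree up, contradicting the Hilbert function there. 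Without this (or an equivalent) quantitative growth estimate, your lower bound is not established; note also that a single-degree catalecticant argument cannot work when $a_0<a_1+a_2$, as you correctly observe, which is exactly why the two-degree Macaulay analysis is unavoidable.
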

This theorem is shown as Example~\ref{ex_monomials_P2xP1xP1___}, 
   which is a consequence of a more general 
   Theorem~\ref{thm_monomials}.
We also calculate or provide new lower bounds 
  for many other monomials 
  (Examples~\ref{ex_mono_222}--\ref{ex_almost_unbalanced}, \ref{ex_br_mono_4443_lower_bound}), 
  focusing on the Veronese case.

In addition to the applications presented in this article, 
  Theorem~\ref{thm_border_apolarity_intro} 
  is applied in \cite{conner_harper_landsberg_border_apolarity_I}
  to provide lower bounds for (and in some cases, calculate the exact value of)
  the border ranks of matrix multiplication tensors, 
  the polynomial which is the determinant of
  a generic square matrix, 
  and other tensors with large symmetry groups.
   
\subsection{Overview}

Throughout  the article we work in 
  the  general setting of a (smooth projective) 
  toric variety $X$ embedded equivariantly 
  into a projective space via
  a complete linear system.
In particular, this approach includes all Segre-Veronese varieties.
In Section~\ref{sec_multihomogeneous_coordinates}
  we review the main language of this article including 
  the Cox ring $S[X]$, multigraded ideals and corresponding subschemes
  (or subvarieties), families of ideals, 
  and multigraded Hilbert schemes.

In Section~\ref{sec_apolarity} we first recall
  multigraded apolarity and explain in detail 
  the objects appearing in \eqref{equ_duality_of_apolarity}.
Then for each $r$ we distinguish a single irreducible component
  of the multigraded Hilbert scheme and call it a slip (Scheme of Limits of Ideals of Points).
We show its relation to the secant varieties and prove the central result of this article, that is, the border apolarity 
(Theorem~\ref{thm_nonsaturated_apolarity}).
In Section~\ref{sec_VSP} we turn our attention to the set
  of solutions to the border rank problem and we show that 
  it forms a nice projective variety $\bVSP$, allowing one to exploit invariant theory to simplify the search for such solutions.
In Sections~\ref{sec_applications}--\ref{sec_monomials}
  we discuss examples
  and applications.

Several statements in this article can be strengthened 
  and generalised at the cost of becoming more technical.
Section~\ref{sec_further_research} adumbrates these claims,
  while the details will be 
  explained in a separate paper in preparation.
Moreover, introducing the theory of border apolarity 
  opens a path to a series of new problems
  as summarised in Section~\ref{sec_further_research}.

\subsection*{Acknowledgements}

We are  grateful to Joachim Jelisiejew and 
  Mateusz Micha{\l}ek for  numerous technical hints,
  to Joseph Landsberg and Zach Teitler for discussions 
  and numerous comments, and to Maciej Ga{\l}{\k a}zka, 
  Giorgio Ottaviani and Kristian Ranestad for remarks on 
  the initial version of this article.
We thank Jerzy Trzeciak for his comments on English grammar and style.
In addition, the article was developed in parallel to \cite{conner_harper_landsberg_border_apolarity_I}.
In our work we principally build the theoretical framework,
while \cite{conner_harper_landsberg_border_apolarity_I} 
provides important and relevant applications 
and a down-to-earth algorithm using the theory.
We are grateful to the authors 
  of \cite{conner_harper_landsberg_border_apolarity_I}
  for sharing the subsequent versions of their results 
  and the initial drafts of the article.
The computer algebra program Magma \cite{magma} was helpful in the calculation of explicit examples.

W.~Buczy{\'n}ska is supported by the Polish National Science Centre project ``Algebraic Geometry: Varieties and Structures'', 2013/08/A/ST1/00804.
W.~Buczy{\'n}ska and J.~Buczy{\'n}ski are also supported by the Polish National Science Centre project ``Complex contact manifolds and geometry of secants'', 2017/26/E/ST1/00231.
The article is also a part of the activities of the AGATES research group.

\section{Multihomogeneous coordinates}\label{sec_multihomogeneous_coordinates}

In this article we  assume 
  for the sake of  clarity that the base field $\kk$ 
  is the field of complex numbers $\CC$.
In Subsection~\ref{sec_other_base_fields} we briefly discuss
   generalisations to other base fields.

\subsection{Toric varieties and multihomogeneous ideals}
   \label{sec_setting_toric_vars_and_mutligr_Hilb}
Let $X$ be a smooth toric projective variety of dimension $n$ with Picard group $\Pic(X) \simeq \ZZ^w$.
We denote by $S=S[X] =  \bigoplus_{D \in \Pic(X)}H^0(\ccO_X(D))$ its Cox ring,
which is naturally a  $\Pic(X)$-graded polynomial ring  $\kk[\fromto{\alpha_1}{\alpha_{n+w}}]$, see~\cite[Prop.~5.3.7a]{cox_book}.
This grading is \emph{positive} in the sense that there is  only one monomial of degree zero, namely $1$.
We fix a very ample line bundle $L$ and from now on we consider  $X \subset \PP(H^0(L)^*)$ as an embedded projective variety.
We denote by $\Irrel{X}=\sqrt{(H^0(L))}$  the \emph{irrelevant ideal} of $S[X]$.

\begin{example}\label{ex_Veronese}
  Consider $X\simeq \PP^n$ and $L=\ccO_{\PP^n}(d)$.
  Thus $\Pic(X)=\ZZ$ and $X$ is embedded via the degree $d$ Veronese map in
  $\PP^N= \PP(H^0(\ccO_{\PP^n}(d))^*) = \PPof{S^d \CC^{n+1}}$.
  The Cox ring of $X$ is the $\ZZ$-graded polynomial ring
  $    S[\PP^n]=\CC[\alpha_0,\ldots,\alpha_n] $
  with $\deg(\alpha_i)=1$.
  Here $\Irrel{X} =(\alpha_0,\ldots,\alpha_n)$ is the unique homogeneous maximal ideal.
\end{example}

\begin{example}\label{ex_Segre}
  Consider $X\simeq \PP^a\times \PP^b \times \PP^c$ and
  $L=\ccO_X(1,1,1):=\ccO_{\PP^a}(1)\boxtimes \ccO_{\PP^b}(1)\boxtimes \ccO_{\PP^c}(1)$.
  Thus $\Pic(X)=\ZZ^3$ and $X$ is embedded via the Segre map in
  $\PP^N= \PP(H^0(\ccO_X(1,1,1))^*) =
              \PPof{ \CC^{a+1}\otimes \CC^{b+1}\otimes\CC^{c+1}}$.
  The Cox ring of $X$ is the $\ZZ^3$-graded polynomial
  ring
  \[
    S[\PP^a\times \PP^b \times \PP^c]=\CC[\alpha_0,\ldots,\alpha_a,\beta_0,\dotsc,\beta_b, \gamma_0,\dotsc,\gamma_c ]
    \]
  with $\deg(\alpha_i)=(1,0,0)$, $\deg(\beta_i)=(0,1,0)$, $\deg(\gamma_i)=(0,0,1)$.
  Here 
  \begin{align*}
     \Irrel{X} &=(\alpha_0,\ldots,\alpha_a) 
     \cap  (\beta_0,\dotsc,\beta_b)
     \cap (\gamma_0,\dotsc,\gamma_c)\\ 
     &= (\alpha_i \beta_j \gamma_k \mid 
     i \in \setfromto{0}{a}, j \in \setfromto{0}{b},k \in \setfromto{0}{c}).
  \end{align*}
\end{example}

\begin{notation}\label{not_graded_piece}
   Whenever we  consider a ring $S$ graded by $\Pic (X)$, 
      a homogeneous ideal $I \subset S$, 
      or a graded $S$-module $M$,
      and $D\in \Pic (X)$,
      we denote by $S_D$, $I_D$, $M_D$  the $D$-th graded piece of $S$, $I$, or $M$, respectively.
      Analogously, for sheaves of graded rings, homogeneous ideals or graded modules,
       $\cdot_D$ will also mean the $D$-th graded piece. 
\end{notation} 
Note that $\Irrel{X}$ is a homogeneous ideal and it does not depend on the choice of $L$.
A classical quotient interpretation of the Cox ring is that
\[
 X=\left.\bigl(\AAA^{n+w}\setminus \Spec(S/\Irrel{X})\bigr)\right/(\CC^*)^w
\]
(see \cite[Thm~5.1.11]{cox_book}).
Here $\AAA^{n+w}=\Spec S$,  
$\Spec(S/\Irrel{X})$ is the zero locus of $\Irrel{X}$ in $\AAA^{n+w}$,
and $(\CC^*)^w= \Hom(\Pic(X),\CC^*)$ 
is the torus acting diagonally on 
  $\AAA^{n+w}=\Spec S$ with weights corresponding to the degrees of the variables.
This quotient construction gives rise to the toric ideal-subscheme correspondence, which we briefly describe now.

Any homogeneous ideal $I\subset S$ defines its zero scheme
$\zeroscheme(I) \subset X$ \cite[paragraph before Thm~3.7]{cox_homogeneous}.
In particular,  $\zeroscheme(\Irrel{X}) = \emptyset$.
We say that a homogeneous ideal $I$ is \emph{saturated} if  $I=(I:\Irrel{X})$ or equivalently 
$I_D= \set{s\in H^0(D) \mid s_{|_{\zeroscheme(I)}}=0}$
for any line bundle $D\in\Pic(X)$
where $I_D$ is as in Notation~\ref{not_graded_piece}.
There is a one-to-one correspondence between saturated homogeneous ideals in $S$ and subschemes of $X$.
In particular, the saturated ideal corresponding to the subscheme $R \subset X$ is denoted by $\idealof(R)$.
Let $I^{\sat}$ be the saturation of $I$,
  that is, the smallest saturated ideal containing $I$,
  also obtained by successively replacing $I$ with $(I:\Irrel{X})$, until it stabilises.  
See~\cite[\S2.1]{brown_jabu_maps_of_toric_varieties} for a more general situation and more details. Note that since here we assume $X$ is smooth and projective, 
the divisor class group used in
\cite{brown_jabu_maps_of_toric_varieties}
 is torsion free and equal to $\Pic(X)$.

Similarly, in the relative setting, if $B$ is another variety (or scheme) over $\CC$,
  then we have a correspondence (not bijective) between 
  subschemes $\ccZ \subset X\times B$ and homogeneous ideal sheaves $\ccI\subset S\otimes \ccO_B$: 
  to a subscheme $\ccZ$ in grading $D\in \Pic(X)$ 
  we assign the sheaf 
  $\idealof(\ccZ)_D= \set{s\in H^0(D)\otimes \ccO_B \mid s_{|_{\ccZ}}=0}$, and then $\idealof(\ccZ)= \bigoplus_{D\in \Pic(X)} \idealof(\ccZ)_D \subset S\otimes \ccO_B$.
  In the other direction, 
     to a sheaf of ideals $\ccI$ we assign 
     the scheme 
  \[
    \zeroscheme(\ccI) = \left.\Bigl(\SheafySpec (S\otimes \ccO_B/ \ccI) 
      \setminus \bigl(\Spec (S/ \Irrel{X})\times B\bigr)\Bigr) \right/ \Hom(\Pic(X),\CC^*).
  \]
Again, as above, we say that the family of ideals $\ccI$ is \emph{saturated} if  
$\ccI = \bigl(\ccI: (\Irrel{X}\otimes \ccO_B)\bigr)$.   
For a closed point $b\in B$ 
   we set $\ccI_b \subset S$ to be the fibre ideal,
   that is, $\ccI_b =\ccI \otimes_{\ccO_B} \ccO_b$.
\begin{prop}\label{prop_semicontinuities_for_families_of_ideals}
   Let $\ccZ\subset X \times B$ be a subscheme 
   and $\ccI\subset S\otimes \ccO_B$ be
   a homogeneous ideal sheaf, as above. Pick any $D\in \Pic(X)$. Then
   \begin{enumerate}
    \item\label{item_ideal_of_family_is_saturated}
          $\idealof(\ccZ)$ is saturated.
    \item\label{item_Hilb_func_of_family_of_ideals_is_upper_semicont}
          If $B$ is a variety, 
             then $\dim \,(S/\ccI_b)_D$ is an upper semicontinuous function of $b$, 
             that is, for each integer $r$ the set 
             $\set{b\in B\mid \dim\,(S/\ccI_b)_D \geqslant r}$ is a closed subset of $B$.
    \item\label{item_Hilb_func_of_saturations_is_lower_semicont}
     If $B$ is a variety and 
             $\ccZ$ is flat over $B$,
             then $\dim \left(S/\left(\idealof(\ccZ)_b\right)^{\sat}\right)_D$, which is equal to $\dim \bigl(S/\idealof(\ccZ_b)\bigr)_D$, 
             is a lower semicontinuous function of $b$, that is, for each integer $r$ the set 
             $\set{b\in B \mid \dim \left(S/\left(\idealof(\ccZ)_b\right)^{\sat}\right)_D \leqslant r}$ is a closed subset of $B$.
   \end{enumerate}
\end{prop}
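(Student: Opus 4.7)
The plan is to handle the three parts separately using the Cox-ring ideal-subscheme correspondence reviewed above. For \eqref{item_ideal_of_family_is_saturated}, I would check saturation directly from the definition. If $s\cdot \Irrel{X}^N\subset \idealof(\ccZ)$ for some $N$, then at any closed point $x\in X$ some $\sigma\in \Irrel{X}^N$ is non-vanishing because $\Irrel{X}$ cuts out the empty subscheme of $X$. On the neighbourhood of $\{x\}\times B$ where $\sigma$ is a unit, $(s\sigma)|_{\ccZ}=0$ forces $s|_{\ccZ}=0$; varying $x$ yields $s\in \idealof(\ccZ)$.

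For \eqref{item_Hilb_func_of_family_of_ideals_is_upper_semicont}, I would observe that $S_D=H^0(X,\ccO_X(D))$ is finite-dimensional (as $X$ is projective), so $S_D\otimes\ccO_B$ is free of finite rank and its coherent subsheaf $\ccI_D$ has coherent quotient $(S/\ccI)_D$. By right exactness of $-\otimes_{\ccO_B}\ccO_b$, the fibre of this sheaf at $b$ is $(S/\ccI_b)_D$, and the claim reduces to the standard upper semicontinuity of fibre dimensions of a coherent sheaf on a variety.

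For \eqref{item_Hilb_func_of_saturations_is_lower_semicont}, I would first establish the equality: both $(\idealof(\ccZ)_b)^{\sat}$ and $\idealof(\ccZ_b)$ are saturated ideals in $S$ whose zero scheme is the scheme-theoretic fibre $\ccZ_b\subset X$ (by applying the Cox-ring formula for $\zeroscheme$ to the fibre), so they coincide. For the semicontinuity I would work with the restriction morphism of coherent sheaves on $B$,
\[
    \rho_D\colon S_D\otimes \ccO_B \to \pi_*(L_D|_{\ccZ}),\qquad \pi\colon \ccZ\to B,
\]
whose fibre at $b$ (via cohomology and base change, using flatness of $\ccZ$) is the restriction $S_D\to H^0(\ccZ_b,L_D|_{\ccZ_b})$ with image of dimension $\dim(S/\idealof(\ccZ_b))_D$. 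In the finite-fibre case central to the paper, $\pi_*(L_D|_{\ccZ})$ is locally free of rank $\ell=\mathrm{length}(\ccZ_b)$, so $\rho_D$ is a morphism of locally free sheaves and the lower semicontinuity of its fibre rank is classical; equivalently, $\dim(S/\idealof(\ccZ_b))_D = \ell - \dim H^1(X,\idealof(\ccZ_b)\otimes L_D)$ (using the vanishing of $H^1$ on a zero-dimensional scheme), and Grauert's theorem applied to the flat family of ideal sheaves renders the second term upper semicontinuous in $b$.

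The main technical point I foresee is the cohomology-and-base-change identification $\pi_*(L_D|_{\ccZ})\otimes \ccO_b\cong H^0(\ccZ_b,L_D|_{\ccZ_b})$: it is automatic when the fibres of $\ccZ\to B$ are finite (the case relevant to border rank), but for general flat $\ccZ$ one must argue via Serre vanishing for $D$ sufficiently positive or control the higher direct images. Once this identification is in place, the argument is a mechanical application of standard semicontinuity results.
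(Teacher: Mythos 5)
Your arguments for parts \ref{item_ideal_of_family_is_saturated} and \ref{item_Hilb_func_of_family_of_ideals_is_upper_semicont} are correct and in the same spirit as the paper's (which simply cites \cite[Ex.~II.5.8(a)]{hartshorne} for \ref{item_Hilb_func_of_family_of_ideals_is_upper_semicont}); your fibrewise computation $(S/\ccI)_D\otimes\kappa(b)\cong(S/\ccI_b)_D$ is exactly the reduction needed.

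For part \ref{item_Hilb_func_of_saturations_is_lower_semicont} you take a route that is different from the paper's, and your main argument as written is incomplete precisely where you flag it. The paper disposes of it in one line via \cite[Ex.~II.5.8(b)]{hartshorne}, whereas you reach for $\rho_D\colon S_D\otimes\ccO_B\to\pi_*(L_D|_\ccZ)$ and then need cohomology and base change, which you only have for finite fibres (the case you need for the rest of the paper, but not the generality claimed by the proposition). The cleanest way to close the gap — and essentially what your parenthetical ``Grauert's theorem applied to the flat family of ideal sheaves'' is reaching for, but without the Euler-characteristic detour and without the finite-fibre restriction — is this: since $\ccO_\ccZ$ is flat over $B$ and $\ccO_{X\times B}$ is flat over $B$, the ideal sheaf $\ccI_{\ccZ/X\times B}$ is flat over $B$; flatness of $\ccO_\ccZ$ also gives $\ccI_{\ccZ/X\times B}\otimes\kappa(b)\cong\ccI_{\ccZ_b/X}$. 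Apply the Semicontinuity Theorem \cite[Thm~III.12.8]{hartshorne} to the $B$-flat sheaf $\ccI_{\ccZ/X\times B}\otimes p_X^*\ccO_X(D)$ to get that $b\mapsto h^0\bigl(X,\ccI_{\ccZ_b}(D)\bigr)=\dim\idealof(\ccZ_b)_D$ is upper semicontinuous, i.e.\ $\dim(S/\idealof(\ccZ_b))_D$ is lower semicontinuous, for arbitrary flat $\ccZ$ and arbitrary $D$, with no base-change input. Your identification $(\idealof(\ccZ)_b)^{\sat}=\idealof(\ccZ_b)$ (two saturated ideals cutting out the same fibre subscheme) is correct and is the same observation the paper makes implicitly. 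What your approach buys, compared to the paper's bare citation, is an explicit sheaf-theoretic mechanism that makes the flatness hypothesis visibly do work; what it costs, if you insist on going through $\pi_*(L_D|_\ccZ)$, is the cohomology-and-base-change bookkeeping — which the ideal-sheaf version sidesteps entirely.
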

\begin{proof}
   The first item is straightforward. 
   The second item is \cite[Exercise~II.5.8(a)]{hartshorne}.
   The third item follows from part (b) of the same exercise, which shows that 
   the dimension of 
   $\left(S/\left(\idealof(\ccZ)_b\right)\right)_D$
   is constant (independent of $b$).
   Then use $\idealof(\ccZ)_b \subset \left(\idealof(\ccZ)_b\right)^{\sat}$ to conclude.
\end{proof}

\begin{example}
   A classical case of the dichotomy of semicontinuity 
   as in Proposition~\ref{prop_semicontinuities_for_families_of_ideals}\ref{item_Hilb_func_of_family_of_ideals_is_upper_semicont} 
   and \ref{item_Hilb_func_of_saturations_is_lower_semicont}
   is the case of four points moving on a projective plane.
   So let $B=\AAA^2 = \Spec\CC[s,t]$, $X = \PP^2$, 
     $S=S[X]=\CC[\alpha_0,\alpha_1,\alpha_2]$, $D=\ccO_{\PP^2}(2)$.
   Consider four distinct points of $X$ parameterised by~$B$:
   \[
           \chi_1 = [1,0,0], \quad \chi_2 = [0,1,0], 
     \quad \chi_3 = [1,1,s], \quad \chi_4 = [1,-1,t].
   \]
   Here $\chi_1$ and $\chi_2$ are independent of $s$ and $t$. 
    For any $s,t$ the four points are distinct, and for $s=t=0$, they are collinear, while for all other parameters, they are linearly nondegenerate.
   Let $\ccZ= \set{\chi_1, \chi_2, \chi_3, \chi_4}$. 
   Then $\ccZ\to B$ is flat, and 
   \begin{align*}
      \dim\Bigl(S/(\idealof\bigl(\ccZ)_{s,t}\bigr)\Bigr)_2 &=
                                \begin{cases}
                                   4 & \text{for } (s,t) \ne (0,0),\\
                                   5 & \text{for } (s,t) = (0,0),
                                \end{cases} \text{ and}\\
      \dim\Bigl(S/\bigl(\idealof(\ccZ_{s,t})\bigr)\Bigr)_2 = 
      \dim \Bigl(S/\bigl(\idealof(\ccZ)_{s,t}\bigr)^{\sat}\Bigr)_2 &=
                               \begin{cases}
                                   4 & \text{for } (s,t) \ne (0,0),\\
                                   3 & \text{for } (s,t) = (0,0).
                                \end{cases}
   \end{align*}
   That is, the two numbers agree for generic $(s,t)$, but at the special point $(s,t)=(0,0)$ the top number goes up, while the bottom one goes down.
   As the difference in the layout of parentheses
     might be hard to spot, 
   we stress that to calculate the top number
   we specialise the ideal of the whole family $\ccZ$ to $(s,t)$,
   while in the bottom number we specialise $\ccZ$ to $(s,t)$, and only then do we calculate the ideal.
\end{example}

We also mention a flatness condition 
for families of homogeneous ideals, which is easy to apply: 
essentially the flatness is equivalent
to the Hilbert function being independent of the ideal in the family.
This is analogous to \cite[Thm~III.9.9]{hartshorne}, 
  where for families of projective schemes 
  the flatness is equivalent to the Hilbert polynomial being independent of the ideal.

\begin{lemma}\label{lem_flatness_is_constant_Hilert_function}
    Let $B$ be a variety, and $\ccI\subset S\otimes \ccO_B$ a family of homogeneous ideals.
    Then the following conditions are equivalent:
    \begin{itemize}
     \item $\ccI$ is flat over $B$,
     \item $S\otimes \ccO_B/ \ccI$ is flat over $B$,
     \item for each degree $D\in \Pic(X)$,
     $\dim (\ccI_b)_D$ does not depend on the choice of the point $b\in B$,
     \item for each degree $D\in \Pic(X)$, 
           $\dim ((S\otimes \ccO_B/ \ccI)_b)_D$ 
           does not depend on the choice of the point 
           $b\in B$.
    \end{itemize} 
\end{lemma}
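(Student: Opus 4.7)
The plan is to reduce everything to a statement about coherent sheaves on $B$ by exploiting the grading. Write $\ccI = \bigoplus_{D \in \Pic(X)} \ccI_D$ and $\ccQ := (S \otimes \ccO_B)/\ccI = \bigoplus_{D \in \Pic(X)} \ccQ_D$. Because $S$ is positively graded, each $S_D$ is a finite-dimensional vector space, so $S_D \otimes \ccO_B$ is a locally free coherent $\ccO_B$-module of rank $\dim_\kk S_D$, and hence both $\ccI_D$ and $\ccQ_D$ are coherent $\ccO_B$-modules. Since $\operatorname{Tor}^{\ccO_B}_{\bullet}$ commutes with arbitrary direct sums in each variable, $\ccI$ (respectively $\ccQ$) is flat over $\ccO_B$ if and only if each graded piece $\ccI_D$ (respectively $\ccQ_D$) is flat. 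Hence the problem reduces, $D$ by $D$, to a question about individual coherent sheaves on $B$.

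The next step is to establish the equivalence of (1) and (2). From the short exact sequence
\[
0 \longrightarrow \ccI_D \longrightarrow S_D \otimes \ccO_B \longrightarrow \ccQ_D \longrightarrow 0
\]
and the fact that the middle term is already $\ccO_B$-flat, the long exact sequence of $\operatorname{Tor}$ immediately gives $\operatorname{Tor}^{\ccO_B}_1(\ccQ_D,-) \cong \operatorname{Tor}^{\ccO_B}_2(\ccQ_D,-)$-shifts that show $\ccI_D$ is flat if and only if $\ccQ_D$ is, for every $D$.

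For the equivalences with the fibre-dimension conditions (3) and (4) I would invoke two classical facts about a coherent sheaf $\ccF$ on a variety $B$: (a) $\ccF$ is flat over $\ccO_B$ if and only if $\ccF$ is locally free, and (b) a coherent sheaf is locally free if and only if the function $b \mapsto \dim_{\kappa(b)}(\ccF \otimes_{\ccO_B} \kappa(b))$ is locally constant on $B$. Because $B$ is a variety, and in particular connected, locally constant coincides with constant. Applying this to $\ccF = \ccI_D$ gives the equivalence (1) $\Leftrightarrow$ (3), and applying it to $\ccF = \ccQ_D$ gives (2) $\Leftrightarrow$ (4); the four conditions thus coincide graded piece by graded piece, and so coincide globally.

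The only mildly delicate point is the graded reduction itself: the ideal sheaf $\ccI$ is not coherent (it has infinitely many nonzero graded pieces in general), so the criterion ``flat $\Leftrightarrow$ locally free'' cannot be applied to $\ccI$ directly, and it is essential to first split off the individual coherent pieces $\ccI_D$ and use that flatness of a direct sum is equivalent to flatness of each summand. Once this reduction is in place, the rest is a routine assembly of standard facts, with no real obstacle.
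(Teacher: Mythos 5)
Your graded reduction to coherent pieces and the equivalences $(1)\Leftrightarrow(3)$, $(2)\Leftrightarrow(4)$ via "coherent sheaf on a reduced connected Noetherian scheme is flat iff locally free iff constant fibre dimension" are clean and correct. The problem is the claimed equivalence $(1)\Leftrightarrow(2)$. From the short exact sequence $0\to\ccI_D\to S_D\otimes\ccO_B\to (S\otimes\ccO_B/\ccI)_D\to 0$ with locally free middle term, the dimension shift gives $\operatorname{Tor}_i(\ccI_D,-)\cong\operatorname{Tor}_{i+1}\bigl((S\otimes\ccO_B/\ccI)_D,-\bigr)$ only for $i\ge 1$; it controls $\operatorname{Tor}_{\ge 2}$ of the quotient but says nothing about $\operatorname{Tor}_1$. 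Hence flatness of the quotient forces flatness of $\ccI_D$, but not conversely — the familiar one-sidedness of flatness in short exact sequences (compare $0\to\ZZ\xrightarrow{\,2\,}\ZZ\to\ZZ/2\to 0$).

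In fact $(1)\Rightarrow(2)$ (equivalently $(3)\Rightarrow(4)$) fails at the stated level of generality, so the gap cannot be patched by a better Tor argument. Take $X=\PP^1$, $S=\CC[\alpha_0,\alpha_1]$, $B=\Spec\CC[t]$, and the homogeneous ideal $\ccI=(t\alpha_0)\subset S\otimes\CC[t]$. Each graded piece $\ccI_D=t\alpha_0\cdot\bigl(S_{D-1}\otimes\CC[t]\bigr)$ is a free $\CC[t]$-module, so $\ccI$ is flat over $B$ and $\dim(\ccI_b)_D$ is constant in $b$; but $(S\otimes\CC[t]/\ccI)_1\cong\bigl(\CC[t]/(t)\bigr)\alpha_0\oplus\CC[t]\alpha_1$ has $t$-torsion, so the quotient is not flat and its degree-$1$ fibre dimension jumps from $1$ to $2$ at $t=0$. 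The source of the failure is that $\ccI_0\to S$ is not injective, i.e.\ the fibre at the origin is not an ideal of $S$. What you have actually proved is the chain $(2)\Leftrightarrow(4)\Rightarrow(1)\Leftrightarrow(3)$ with the middle arrow irreversible; to make the four conditions genuinely equivalent one must add the hypothesis that every fibre $\ccI_b$ injects into $S$ — which, via the local criterion of flatness applied to each coherent piece of the quotient, is itself already equivalent to condition $(2)$.
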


\subsection{Multigraded Hilbert scheme}

Consider a function $h\colon \Pic(X) \to \NN$, where $\NN$ is the set of non-negative integers. 
We assume $h$ is non-zero only on effective divisors, that is, on those  $D\in\Pic(X)$  for which $S_D\ne 0$.
Let $\Hilb^h_S$ be \emph{the multigraded Hilbert scheme}, which parameterises \emph{all} the homogeneous ideals $I \subset S$
such that the Hilbert function of $S/I$ is $h$. 
We stress that in general $\Hilb^h_S$ contains points that represent both saturated and non-saturated ideals, 
hence it is not necessarily equal to any (standard) Hilbert scheme, even in the standard  case, when $X=\PP^n$ is a projective space. 
See \cite{haiman_sturmfels_multigraded_Hilb} for more on the definition and properties of the multigraded Hilbert scheme.
In particular, by \cite[Thm~1.1 and Cor.~1.2]{haiman_sturmfels_multigraded_Hilb} the scheme $\Hilb^h_S$ is projective, since the grading is positive in our setting.    

\begin{rmk}
   Note that the name \emph{multigraded} Hilbert scheme
   proposed by Haiman and Sturmfels
   might be a little confusing, because you could expect that
   if you specialise the multigraded case to the single graded case,
   you obtain the standard Hilbert scheme, while this is not the case.
   The main difference coming from the adjective ``multigraded'' is that the multigraded Hilbert scheme
   parameterises \emph{ideals} with a fixed Hilbert \emph{function}, as opposed to the standard Hilbert scheme,
   which parameterises \emph{subschemes} 
   with a fixed Hilbert \emph{polynomial}.
\end{rmk}

Note that depending on the grading of $S$ 
  and on the Hilbert function $h$, 
  the multigraded Hilbert scheme $\Hilb_S^h$ might be (non-)empty, (ir)reducible, (dis)connected, \mbox{(non-)}re\-du\-ced. 
In this article we only consider the reduced structure of 
 $\Hilb_S^h$,
  that is, we think of $\Hilb^h_S$ 
  as a finite union of projective varieties 
  (sometimes also called a reducible variety).
We  denote this (possibly reducible) variety 
   by $\reduced{(\Hilb^h_S)}$.
Thus each closed point of $\Hilb^h_S$ 
   or $\reduced{(\Hilb^h_S)}$
   represents a homogeneous ideal $I\subset S$
   and in such a situation we simply write $I\in \Hilb^h_S$.
   
We consider the subset 
$\Hilb_S^{h, \sat} \subset \Hilb^h_S$, 
consisting of the closed points 
representing saturated ideals.
This set may be empty, dense, or neither. 
It can be shown that this is (the set of closed points of) a Zariski open subscheme 
 (see Subsection~\ref{sec_saturation_open} for a brief discussion).
 Here we prove a weaker statement which is sufficient for the results of the article:
 in each irreducible component of $\Hilb^h_S$ the subset of saturated ideals is either empty or dense. 
For this purpose we need the following definition.

\begin{defin}
  For an irreducible variety $Y$, we say that a property $\ccP$ is satisfied for a \emph{very general} point of $Y$ if it is satisfied 
  for every point outside of a countable union of proper Zariski closed subsets of $Y$.
\end{defin}

Since we work over $\CC$, 
by the Baire category theorem, 
if $\ccP$ holds for a very general point of $Y$, 
then the set of points in $Y$ that satisfy $\ccP$ is dense in $Y$ 
in the analytic topology of $Y$, and therefore also in the Zariski topology of $Y$.

\begin{prop}\label{prop_saturated_is_open}
Suppose $\sH \subset \reduced{(\Hilb^h_S)}$ is an irreducible component.
Then either $\sH\cap \Hilb_S^{h, \sat}$ is empty or it contains a very general point of $\sH$
(in particular, in the latter case, the intersection is dense in $\sH$). 
\end{prop}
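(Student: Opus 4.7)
\emph{Strategy.} The plan is to realise the saturated locus inside $\sH$ as a countable intersection of Zariski open subsets of $\sH$. If every open set in the collection is nonempty, then each is dense in $\sH$ by irreducibility, and the intersection contains every very general point by definition; if some one of them is empty, the intersection is empty and we are in the first alternative.

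\emph{Reduction and semicontinuity.} First I would reduce ``$I$ is saturated'' to the single-step condition $(I:\Irrel{X}) = I$: the nontrivial implication follows by a short induction on $k$, showing $(I:\Irrel{X}^k) = I$ for all $k\geqslant 1$ and hence $I^{\sat} = I$. This condition is checked one $\Pic(X)$-degree at a time. Writing $\ccI \subset S\otimes \ccO_\sH$ for the universal family on $\sH$, I set
\[
    U_D := \bigl\{ b\in\sH \mid ((\ccI_b):\Irrel{X})_D = (\ccI_b)_D \bigr\} \quad\text{for each } D\in\Pic(X),
\]
and aim to show each $U_D$ is open. Picking homogeneous generators $g_1,\dots,g_m$ of $\Irrel{X}$ of degrees $E_1,\dots,E_m$, the space $((\ccI_b):\Irrel{X})_D$ is exactly the kernel of the multiplication map
\[
    \Phi_{D,b} \colon S_D \longrightarrow \bigoplus_{j=1}^{m} (S/\ccI_b)_{D+E_j}, \qquad f\longmapsto (g_j f \bmod \ccI_b)_j.
\]
Since every point of $\sH$ represents an ideal with the same Hilbert function $h$, Lemma~\ref{lem_flatness_is_constant_Hilert_function} guarantees that $\ccI$ is flat over $\sH$ and each $(S/\ccI)_{D+E_j}$ is a vector bundle of rank $h(D+E_j)$. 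Hence $\Phi_{D,b}$ varies as a morphism of trivial vector bundles over $\sH$, its kernel has upper semicontinuous dimension, and since $\dim (\ccI_b)_D$ is constant on $\sH$, the inclusion $(\ccI_b)_D \subset ((\ccI_b):\Irrel{X})_D$ becomes equality exactly on the open locus where this kernel attains its minimum dimension. Thus $U_D$ is open.

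\emph{Conclusion.} The saturated locus in $\sH$ is $\bigcap_{D\in\Pic(X)} U_D$, a countable intersection since $\Pic(X)\simeq \ZZ^w$. If some $U_D$ is empty, the intersection is empty and we land in the first alternative. Otherwise every $U_D$ is a nonempty open subset of the irreducible variety $\sH$, hence dense, and the complement $\bigcup_{D}(\sH\setminus U_D)$ is a countable union of proper Zariski closed subsets, so by definition the intersection contains every very general point of $\sH$. The only technical step is the semicontinuity one: converting ``$(I:\Irrel{X})_D = I_D$'' into an open condition via a family of multiplication maps. The flatness input for that step comes essentially for free from the Hilbert function being prescribed across $\Hilb^h_S$, which is exactly why restricting to a single component of the \emph{multigraded} Hilbert scheme (rather than, say, the ordinary Hilbert scheme) is so convenient here.
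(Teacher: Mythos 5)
Your proof is correct and is a genuine, though closely related, variant of the one in the paper. Both arguments write the saturated locus in $\sH$ as a countable intersection, indexed by $D\in\Pic(X)\simeq\ZZ^w$, of Zariski open subsets, and both finish by observing that the complement is a countable union of proper closed subsets (handling the empty alternative as the degenerate case). The difference is in how one certifies openness in each degree. The paper describes the bad locus $\sH_D=\set{I\in\sH\mid \dim(S/I^{\sat})_D<h(D)}$ and appeals to Proposition~\ref{prop_semicontinuities_for_families_of_ideals}\ref{item_Hilb_func_of_saturations_is_lower_semicont} — lower semicontinuity of the saturated Hilbert function, ultimately resting on Hartshorne's exercise II.5.8 — and then uses an explicitly given saturated $J\in\sH$ to see $\sH_D\subsetneq\sH$. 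You instead describe the good locus $U_D$ via the single-step condition $(I:\Irrel{X})_D=I_D$, realise the colon ideal in degree $D$ as the kernel of a morphism of vector bundles (flatness and local freeness come from the constant Hilbert function on $\sH$ via Lemma~\ref{lem_flatness_is_constant_Hilert_function}), and invoke only the elementary upper semicontinuity of kernel dimension for a bundle map, comparing against the constant rank $\dim(\ccI_b)_D$. Your version buys a more self-contained and concrete semicontinuity step: it works directly with the flat family of ideals on $\sH$ and does not require passing through a family of associated subschemes, whereas the cited Proposition~\ref{prop_semicontinuities_for_families_of_ideals}\ref{item_Hilb_func_of_saturations_is_lower_semicont} is phrased for flat families of subschemes. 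The paper's version is a bit shorter because it can quote an already-proved semicontinuity statement. One minor economy you could have made: the equivalence between $I=(I:\Irrel{X})$ and $I=I^{\sat}$ is how the paper \emph{defines} saturation, so your inductive reduction, while correct, is not strictly needed.
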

\begin{prf}
   Suppose  $\sH\cap \Hilb_S^{h, \sat}$ 
      is non-empty and take a saturated ideal $J\in \sH$.
   Pick any $D\in \Pic(X)$ and denote
   \[
      \sH_{D} := \set{I \in \sH \mid \dim (S/I^{\sat})_D < h(D)}.
   \]
   This is a Zariski closed subset of $\sH$ by
   Proposition~\ref{prop_semicontinuities_for_families_of_ideals}\ref{item_Hilb_func_of_saturations_is_lower_semicont}. 
   Since $\dim (S/J^{\sat})_D = \dim (S/J)_D = h(D)$, thus  $\sH_D \ne \sH$.
   We have
   \[
     H\cap \Hilb_S^{h, \sat} = \sH \setminus \bigcup_{D\in \Pic(X)} \sH_{D}
   \]
   and thus $H \cap \Hilb_S^{h, \sat}$ is the complement of a union of countably many  Zariski closed strict subsets.
\end{prf}
Note that it is not enough to remove only $H_D$ 
   for $D$ in some minimal generating set of $\Pic(X)$ 
   or of the effective cone.
Already in the case of the Veronese threefold $X= \PP^3$ as in 
   Example~\ref{ex_Veronese},
   if $h = (1,4,6,6,6,\dotsc)$,
   then for a component $\sH \subset \Hilb_S^{h}$ we have 
   $\sH\subset \Hilb_S^{h, \sat} = \sH \setminus (\sH_{1} \cup \sH_{2})$,
   with $\sH_{1}$ being the set of ideals  whose saturation has a linear form,
   while $\sH_{2}$ is the set of ideals whose saturation has at least five independent quadrics.
At the other extreme, if $h=(1,2,2,2,\dotsc)$,
   then all the homogeneous ideals in $S[\PP^n]$ with Hilbert function $h$ 
   are saturated, so we do not need to take out any $\sH_D$, in particular, the set of such $D$'s does not generate $\Pic(X)$.

\section{Apolarity theory on toric varieties}\label{sec_apolarity}

\subsection{Rank, border rank and multigraded apolarity}
\label{sec_apolarity_for_rank}
Following Ga{\l}{\k a}zka~\cite{galazka_mgr}, we recall the setting for multigraded apolarity on $X$.
Recall that the Cox ring $S$ is $\kk[\fromto{\alpha_1}{\alpha_{n+w}}]$, where $\alpha_i$ are homogeneous generators of $S$ 
   which correspond to primitive torus invariant divisors of $X$.
We let $\widetilde{S}:=\kk[\fromto{x_1}{x_{n+w}}]$ be the dual graded polynomial ring, which we consider as a divided power algebra (with $x_i^{(d)} = \frac{1}{d!}x_i^{d}$).
It is also a graded $S$-module with the following action:
\begin{equation}\label{equ_apolarity_hook_definition}
   \alpha_i \hook \left(x_1^{(a_1)}\cdot x_2^{(a_2)} \dotsm x_{n+w}^{(a_{n+w})}\right) = 
   \begin{cases}
           x_1^{(a_1)}\dotsm x_i^{(a_i-1)} \cdots x_{n+w}^{(a_{n+w})} & \text{if } a_i>0,\\
           0 & \text{otherwise.} 
   \end{cases}
\end{equation}
The grading in $\widetilde{S}$ is given by writing 
\[
  \widetilde{S}= \bigoplus_{D\in \Pic(X)} H^0(D)^*
\]
where the duality is given by~\eqref{equ_apolarity_hook_definition}.
Thus the coordinate free expression of the apolarity action $\hook$ is the following.
  Let 
  $F\in H^0(D_1)^* = \widetilde{S}_{D_1}$ 
  and $\Theta\in H^0(D_2)= S_{D_2}$
  for some $D_1 ,D_2 \in \Pic(X)$. 
Then $\Theta\hook F \in \widetilde{S}_{D_1-D_2}$
  is defined as the functional
  $H^0(D_1-D_2)\to \CC$  given by
\[
   (\Theta\hook F) (\Psi) = F(\Theta\cdot \Psi),
\]
 where $\Psi \in H^0(D_1-D_2)$ is arbitrary, $\Theta\cdot \Psi \in S_{D_1}$
   is the product in the ring $S$, 
   and $F(\ldots)$ is the evaluation 
   of the functional $F$. 
In particular, we have the following natural property of $\hook$:
   
\begin{prop}[Apolarity fixes $X$]
  Suppose $D_1$ and $D_2$ are two effective divisors.
  Denote by $\varphi_{|D_i|}\colon X \dashrightarrow \PPof{H^0(D_i)^*}$ the rational map determined by the complete linear system of $D_i$.
    Let $\hat{X}_i\subset H^0(D_i)^*$ 
    be the affine cone of the closure of the image of $X$ under $\varphi_{|D_i|}$.
  The apolarity action
        \[
          \hook \colon S_{D_1-D_2} \otimes \widetilde{S}_{D_1} \to \widetilde{S}_{D_2}
        \]
      preserves $X$,
      that is, for all $\Theta\in S_{D_1-D_2}$ and $p\in \hat{X}_1$
      we have $\Theta \hook p\in\hat{X}_2$.
      Moreover, if $\chi\in X$ is such that 
         $[p]=\varphi_{|D_1|}(\chi)$, 
         $\chi$ is outside of the base locus 
         of both divisors $D_1$ and $D_2$,
         and $\Theta \hook ( \varphi_{|D_1|}(\chi)) \neq 0$,
         then
         $\varphi_{|D_2|}(\chi)=[\Theta \hook p]$.
\end{prop}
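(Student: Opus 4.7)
The plan is to identify the point $\varphi_{|D_i|}(\chi) \in \mathbb{P}(H^0(D_i)^*)$ (for $\chi$ outside the base locus of $D_i$) with the line spanned by the evaluation functional $\operatorname{ev}_\chi \colon H^0(D_i)\to \CC$, $s\mapsto s(\chi)$ (defined up to a common scalar after a local trivialisation of $D_i$ near $\chi$), and then to evaluate $\hook$ on evaluation functionals directly. Once this is done on a dense subset of $\hat X_1$, a closedness argument upgrades it to all of $\hat X_1$.

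The key computation is local and uses only the defining formula $(\Theta\hook p)(\Psi)=p(\Theta\cdot\Psi)$ together with the fact that $\operatorname{ev}_\chi$ is multiplicative: for $\Theta\in S_{D_1-D_2}=H^0(D_1-D_2)$, $\Psi\in H^0(D_2)$, and $\chi \in X$ outside $\mathrm{Bs}(D_1)\cup \mathrm{Bs}(D_2)$, one has
\[
(\Theta\hook \operatorname{ev}_\chi)(\Psi)=\operatorname{ev}_\chi(\Theta\cdot \Psi)=(\Theta\cdot\Psi)(\chi)=\Theta(\chi)\,\Psi(\chi)=\Theta(\chi)\,\operatorname{ev}_\chi(\Psi).
\]
Hence $\Theta\hook \operatorname{ev}_\chi=\Theta(\chi)\cdot \operatorname{ev}_\chi|_{H^0(D_2)}$. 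This immediately yields the ``moreover'' part: if $\chi$ avoids both base loci and $\Theta(\chi)\ne 0$, then $\Theta\hook p$ (for any lift $p$ of $\varphi_{|D_1|}(\chi)$) is a nonzero scalar multiple of $\operatorname{ev}_\chi|_{H^0(D_2)}$, a representative of $\varphi_{|D_2|}(\chi)$, and so $[\Theta\hook p]=\varphi_{|D_2|}(\chi)$ in $\mathbb{P}(H^0(D_2)^*)$.

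For the first assertion, I would then argue by density. The set of $\chi$ outside $\mathrm{Bs}(D_1)\cup\mathrm{Bs}(D_2)$ is Zariski dense in $X$, and its image under $\varphi_{|D_1|}$ has $\hat X_1$ as the affine cone over its closure; in particular, the union $V$ of lines $\CC\cdot \operatorname{ev}_\chi$ over such $\chi$ is Zariski dense in $\hat X_1$. The previous computation shows that $\Theta\hook V\subset \hat X_2\cup\{0\}\subset \hat X_2$ (the case $\chi\in \mathrm{Bs}(D_2)$ gives $\operatorname{ev}_\chi|_{H^0(D_2)}=0$, and $\Theta(\chi)=0$ gives $0$ as well). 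Since $\Theta\hook(-)$ is a linear (in particular, Zariski continuous) map $H^0(D_1)^*\to H^0(D_2)^*$ and $\hat X_2$ is Zariski closed, the inclusion $\Theta\hook V\subset \hat X_2$ extends to $\Theta\hook \overline{V}=\Theta\hook \hat X_1\subset \hat X_2$.

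I do not expect a real obstacle: the whole argument is essentially the observation that $\operatorname{ev}_\chi$ is a ring homomorphism together with a density/closedness extension. The only points requiring care are the bookkeeping with base loci (so that $\operatorname{ev}_\chi$ is well defined up to a common nonzero scalar), and the distinction between the rational map $\varphi_{|D_i|}$ and the affine cone $\hat X_i$ of the closure of its image, which is handled once by the density argument in the last paragraph.
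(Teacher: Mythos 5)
Your proposal is correct and follows essentially the same route as the paper: both prove the statement at points $[p]=\varphi_{|D_1|}(\chi)$ with $\chi$ off the base loci and off $\zeroscheme(\Theta)$, and then extend to all of $\hat X_1$ by density together with Zariski-continuity of the linear map $\Theta\hook(-)$ and closedness of $\hat X_2$. The only difference is presentational: you compute $\Theta\hook\operatorname{ev}_\chi=\Theta(\chi)\,\operatorname{ev}_\chi|_{H^0(D_2)}$ directly via local trivialisations, whereas the paper phrases the same local fact dually, in terms of the hyperplane $p^{\perp}$ of sections vanishing at $\chi$ and its preimage under multiplication by $\Theta$, thereby avoiding the choice of trivialisations.
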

   Perhaps it is easier to understand
   the above statement when both $D_1$ and $D_2$ are very ample, so that both $\varphi_{|D_i|}$ 
   are embeddings of $X$ into different projective spaces
   and $\Theta$ is non-zero.
   Then the rational (linear) map  
    $\PPof{H^0(D_1)^*} \dashrightarrow \PPof{H^0(D_2)^*}$ of projective spaces
    determined by $\Theta\hook\cdot$ restricts to the identity map on 
    $X \dashrightarrow X$ (wherever defined).
    Note that this expression of the restriction exploits both embeddings
    $\varphi_{|D_i|}$.
\begin{prf}
   If $\Theta=0$, there is nothing to prove, 
      thus we assume $\Theta\ne 0$.
      Since the map $\Theta \hook \cdot$ is linear, it is continuous in the Zariski topology,
      and so is its restriction to $\hat{X}_1$.
   Thus it is enough to prove the claim for general 
     $p\in \hat{X}_1$. 
   More precisely, we will assume that
   \begin{enumerate}
    \item \label{item_proof_hook_preserves_X_point_in_the_image}
       there exists a point $\chi \in X$ such that $[p]\in \PPof{H^0(D_1)^*}$ is the image of $\chi$, 
    \item \label{item_proof_hook_preserves_X_Theta_does_not_vanish}
        the section $\Theta\in H^0(\ccO_X(D_1-D_2))$ does not vanish at $\chi$, and
    \item  \label{item_proof_hook_preserves_X_not_all_sections_vanish}
        $\chi$ is not in the base locus of $D_2$.
   \end{enumerate}
   Each of 
   \ref{item_proof_hook_preserves_X_point_in_the_image}--\ref{item_proof_hook_preserves_X_not_all_sections_vanish}
   is a non-empty and Zariski open condition on $\chi\in X$, 
   and thus the image of the intersection of these conditions
      is dense in $\hat{X}_1$.
   Equivalently to \ref{item_proof_hook_preserves_X_point_in_the_image}, 
      the hyperplane $p^{\perp} \subset H^0(D_1)$
      consists of sections vanishing at $\chi$.
   Thus 
   $\set{\Psi\in H^0(\ccO_X(D_2)) \mid \Theta\Psi \in p^{\perp}}$ 
    is a linear preimage of a hyperplane, hence either a hyperplane or the whole $H^0(\ccO_X(D_2))$.
   That is, it is equal to $(p')^{\perp}$
     for some $p' \in \tilde{S}_{D_2}$ (well defined up to rescaling).
   Moreover,
     \ref{item_proof_hook_preserves_X_Theta_does_not_vanish}
     guarantees that all sections in $(p')^{\perp}$ 
     vanish at $\chi$.
     If $p'=0$, then all sections of $D_2$ vanish at $\chi$,
     a contradiction with \ref{item_proof_hook_preserves_X_not_all_sections_vanish}.
   Thus $p'\ne 0$ and the image of $\chi$ in $\PPof{H^0(D_2)^*}$
     is equal to $[p']$. In particular, $p'\in \hat{X}_2$.
     It remains to observe that by the construction of $p'$,
     and the coordinate free description of $\hook$, $p' = \Theta \hook p$
     up to a non-zero rescaling of~$p'$.  
\end{prf}

\begin{defin}\label{def_all_sorts_of_things} We recall the following notions.
  \begin{enumerate}
  \item  \label{item_def_proj_linear_span} For a scheme $R$ in a projective space $\PP(V)$
    its \emph{projective linear span}, denoted $\linspan{R}$, is  the
    smallest  projective linear subspace of $\PP V$ containing~$R$.
   \item The \emph{$X$-rank} of $F$ is the minimal integer $r=\rank F$ such that $[F] \in \linspan{\fromto{p_1}{p_r}}$, 
     where $p_i$ are points in 
     $X \subset \PP(\widetilde{S}_L)$.
   \item \label{item_def_apolar_ideal}
     Let $F \in \widetilde S_L = H^0(L)^*$ and denote by $[F] \in \PP(\widetilde{S}_L)$ the corresponding point in the projective space.
     The \emph{apolar ideal} of $F$ is the homogeneous ideal $\apolar$ of $S$ annihilating $F$. 
     Explicitly, 
         \[
         \apolar=\set{\Theta\in S \mid   \Theta \hook F=0}.
         \]
   \item\label{item_def_perp}
      For a linear subspace $W \subset V$
      we denote by $W^{\perp} \subset V^*$ 
      the perpendicular space. 
  \end{enumerate}
\end{defin}
\begin{rmk}\label{rem_interactions_between_all_sorts_of_things}
  Typically, we will use~Definition~\ref{def_all_sorts_of_things}\ref{item_def_perp} for 
    a specific degree $L$ of a homogeneous ideal $I\subset S$. 
  Then  $I_L^{\perp} \subset H^0(L)^* = \widetilde{S}_L$ is the perpendicular space with respect to the duality action \eqref{equ_apolarity_hook_definition}. 
  We note the following interactions between items~\ref{item_def_proj_linear_span}, \ref{item_def_apolar_ideal}, \ref{item_def_perp}
    of Definition~\ref{def_all_sorts_of_things}: 
    \begin{itemize}
    \item For a subscheme $R\subset X \subset \PPof {H^0(L)^*}$ we have $\linspan{R} = \PPof {\idealof(R)_L^{\perp}}$. 
    \item Let $L, D \in \Pic(X)$ and $F\in S_L$. Then $\apolar_D^{\perp}=S_{L-D}\hook F \subset H^0(D)^*$. 
       In particular,  $\apolar_L^{\perp}$ is the linear span of $F$.
    \end{itemize}
\end{rmk}

The following property of apolarity is well known in the single graded setting.
\begin{prop}\label{prop_apolar_in_degree_deg_F_vs_apolar}
    Suppose $F \in \widetilde S_L$ and $I\subset S$ is a homogeneous ideal. Then
    \[
      I \subset \apolar \iff I_L \subset \apolar_L.
    \]
\end{prop}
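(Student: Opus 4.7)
The forward direction ``$\Longrightarrow$'' is immediate from the definition of the graded pieces.

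For the converse, the plan is to unpack the coordinate-free description of the apolarity action and exploit the fact that $I$ is an ideal. Fix an arbitrary $D\in\Pic(X)$ and an element $\Theta\in I_D$; the goal is to show $\Theta\hook F=0$ as an element of $\widetilde{S}_{L-D}=H^0(L-D)^*$. I would split into two cases according to whether $L-D$ is effective.

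If $L-D$ is not in the effective cone of $X$, then $S_{L-D}=0$ and $\widetilde{S}_{L-D}=0$, so $\Theta\hook F=0$ trivially and hence $\Theta\in\apolar_D$. Otherwise, pick an arbitrary $\Psi\in S_{L-D}$. Because $I$ is a homogeneous ideal, $\Theta\cdot\Psi\in I_L$, and by hypothesis $I_L\subset\apolar_L$, so $\Theta\cdot\Psi$ annihilates $F$. The coordinate-free formula $(\Theta\hook F)(\Psi)=F(\Theta\cdot\Psi)$ (stated immediately after~\eqref{equ_apolarity_hook_definition}) then yields $(\Theta\hook F)(\Psi)=0$. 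Since $\Psi\in S_{L-D}$ was arbitrary, the functional $\Theta\hook F\in\widetilde{S}_{L-D}$ is zero, so $\Theta\in\apolar_D$. Hence $I_D\subset\apolar_D$ for every $D$, i.e., $I\subset\apolar$.

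There is essentially no obstacle: the whole argument is a direct unpacking of the adjunction between multiplication in $S$ and the contraction $\hook$ on $\widetilde{S}$. The only point that deserves mention is the bookkeeping for multidegrees $D$ with $L-D$ outside the effective cone, which is handled trivially since the relevant graded pieces vanish.
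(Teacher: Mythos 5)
Your proof is correct and is the standard direct unpacking of the adjunction $(\Theta\hook F)(\Psi)=F(\Theta\cdot\Psi)$. The paper itself gives no proof but instead cites \cite[Prop.~3.4(iii)]{nisiabu_jabu_cactus}, \cite[proof of Thm~1.1]{galazka_mgr}, and \cite[Lem.~1.3]{gallet_ranestad_villamizar}, where essentially this same argument appears; your handling of the degenerate case ($L-D$ not effective, so $\widetilde S_{L-D}=0$) is the right bit of bookkeeping for the multigraded setting.
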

The single graded proof~\cite[Prop.~3.4(iii)]{nisiabu_jabu_cactus} 
   works also for the multigraded case.
See also~\cite[proof of Thm~1.1]{galazka_mgr}
or \cite[Lem.~1.3]{gallet_ranestad_villamizar}.
The multigraded apolarity is the following proposition.
\begin{prop}[multigraded apolarity]\label{prop_multigraded_apolarity}
   Consider a smooth toric projective variety $X$ with Cox ring $S$ and embedded in $\PP( H^0(L)^*)=\PPof{\widetilde{S}_L}$.
   Suppose $F \in \widetilde{S}_L$ 
      and pick any subscheme $R \subset X$.
   Then $ [F] \in \linspan{R}$ if and only if $\idealof(R)\subset \apolar$. 
   In particular,  
   \begin{itemize}
       \item the $X$-rank $\rank F$ is at most $r$ if and only if there exists a radical saturated ideal $I \subset S$ 
             such that $I \subset \apolar$ and $I$ is an ideal of $r$ points. 
   \end{itemize}
\end{prop}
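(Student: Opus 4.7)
The plan is to assemble the statement directly from the two bullet points of Remark~\ref{rem_interactions_between_all_sorts_of_things} together with Proposition~\ref{prop_apolar_in_degree_deg_F_vs_apolar}, by chasing the duality between linear subspaces of $\widetilde{S}_L$ and linear subspaces of $S_L$ given by the apolarity pairing \eqref{equ_apolarity_hook_definition}. Everything reduces to a single graded piece, namely $L$.

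For the main equivalence, I would begin by unwinding $[F]\in\linspan{R}$. By the first bullet of Remark~\ref{rem_interactions_between_all_sorts_of_things}, $\linspan{R}=\PPof{\idealof(R)_L^{\perp}}$, so $[F]\in\linspan{R}$ is the same as $F\in\idealof(R)_L^{\perp}\subset\widetilde{S}_L$. By Definition~\ref{def_all_sorts_of_things}\ref{item_def_perp} and the pairing \eqref{equ_apolarity_hook_definition}, this is equivalent to $\Theta\hook F=0$ for every $\Theta\in\idealof(R)_L$, i.e.\ $\idealof(R)_L\subset\apolar_L$. Finally, Proposition~\ref{prop_apolar_in_degree_deg_F_vs_apolar} upgrades this condition in the single degree $L$ to the full inclusion of homogeneous ideals $\idealof(R)\subset\apolar$. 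Chaining these equivalences gives the first part.

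For the "in particular" clause, I argue in both directions. If $\rank F \leqslant r$, then by Definition~\ref{def_all_sorts_of_things} there exist $r$ points $p_1,\dots,p_r \in X$ with $[F]\in\linspan{\{p_1,\dots,p_r\}}$; the ideal $I := \idealof(\{p_1,\dots,p_r\})$ is radical and saturated by construction, cuts out $r$ (possibly fewer, if some $p_i$ coincide, in which case I can pad with extra general points to make exactly $r$ distinct ones without leaving the linear span), and lies in $\apolar$ by the first part. Conversely, given a radical saturated ideal $I\subset\apolar$ of $r$ points, set $R:=\zeroscheme(I)\subset X$, so that $I=\idealof(R)$; by the first part $[F]\in\linspan{R}$, and since $R$ is a union of $r$ reduced points, this yields $\rank F\leqslant r$.

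The argument is essentially bookkeeping, and I do not expect a real obstacle: the two bullets in Remark~\ref{rem_interactions_between_all_sorts_of_things} have already done the geometric and algebraic identifications, and Proposition~\ref{prop_apolar_in_degree_deg_F_vs_apolar} takes care of passing between a single-degree statement and the full ideal-theoretic statement. The only slightly delicate point is making sure that, when proving the forward direction of the "in particular" part, the $r$ witnessing points can be chosen distinct (so that $R$ is a smooth scheme of length exactly $r$); this is handled by the padding argument above, using that adding generic points of $X$ to a spanning set does not decrease the span.
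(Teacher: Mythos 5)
Your proposal is correct, and it is notable that the paper itself does not prove Proposition~\ref{prop_multigraded_apolarity}: after the statement the authors only cite external references (Iarrobino--Kanev, Teitler, Ga{\l}{\k a}zka, Gallet--Ranestad--Villamizar) rather than supplying an argument. What you have done is reassemble the proof from pieces the paper already records: the identification $\linspan{R}=\PPof{\idealof(R)_L^{\perp}}$ from the first bullet of Remark~\ref{rem_interactions_between_all_sorts_of_things}, the unwinding of the perpendicular via the pairing~\eqref{equ_apolarity_hook_definition} (so $F\in\idealof(R)_L^{\perp}$ exactly when $\idealof(R)_L\subset\apolar_L$), and finally Proposition~\ref{prop_apolar_in_degree_deg_F_vs_apolar} to promote the single-degree inclusion $\idealof(R)_L\subset\apolar_L$ to $\idealof(R)\subset\apolar$. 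This is exactly the intended content, only made explicit. The argument does lean on two items stated but not proved in the paper --- Remark~\ref{rem_interactions_between_all_sorts_of_things} (stated as folklore) and Proposition~\ref{prop_apolar_in_degree_deg_F_vs_apolar} (with a pointer to the single-graded proof) --- but that is the same dependency the authors implicitly assume. Your handling of the ``in particular'' clause is also fine; the padding step is the standard device to guarantee $r$ distinct points, and it works because enlarging the point set only enlarges the span while $X$ (being a positive-dimensional variety over $\CC$) always provides fresh points. In short: you supply the proof the paper delegates to references, and it is sound.
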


\begin{rmk}
The statement of multigraded apolarity  coincides with the standard 
  apolarity~\cite[Thm~5.3.B]{iarrobino_kanev_book_Gorenstein_algebras}
  in the case where $X$ is projective space in its Veronese embedding
  (Example~\ref{ex_Veronese}). 
For $X$ isomorphic to a product of projective spaces 
  (Segre-Veronese varieties)
  it appeared in 
  \cite[Thm~4.10]{teitler_lower_bound_for_generalized_ranks}.
Then it was shown in the Master Thesis of Ga{\l}{\k a}zka 
\cite[Thm~1.1]{galazka_mgr} for any $\QQ$-factorial toric projective variety, 
and later in \cite[Lem.~1.3]{gallet_ranestad_villamizar}, where  it was  again proved (and used) for smooth projective toric varieties.
It seems plausible that the analogous statement 
   can also be proven for any projective Mori Dream Space 
   (that is, a projective variety with a reasonable analogue 
    of the Cox ring, or total coordinate ring).
\end{rmk}

We now define the secant varieties and the border variant of rank. 
Our goal in Section~\ref{sec_apolarity_for_border_rank} is to generalise apolarity (Proposition~\ref{prop_multigraded_apolarity}) to the border rank.

\begin{defin}
  The \emph{$r$\textsuperscript{th} secant variety} of $X\subset \PP( H^0(L)^*)$ is the following subvariety of $\PP( H^0(L)^*)$:
\[
\sigma_r(X)=\overline{\set{[F]\in \PP( H^0(L)^*) \mid \rank F \leq r  }}.
\]
The \emph{$X$-border rank} of ${F} \in \PP( H^0(L)^*)$, denoted $\borderrank{F}$, is the smallest $r$ such that $[F] \in \sigma_r(X)$.
\end{defin}

\subsection{Apolarity for border rank}\label{sec_apolarity_for_border_rank}

For a non-negative integer $r$ define $h_{r,X}\colon \Pic(X) \to \NN$ as
\[
  h_{r,X}(D): = \min \left( r, \dim H^0(D)\right).
\]
\begin{lemma}\label{lem_hilb_function_of_very_general_tuple}
   For any tuple
   $\bar{\chi}=(\fromto{\chi_1}{\chi_r}) \in X^{\times r} = 
   \underbrace{X\times  \dotsb \times X}_{r 
   \text{ times}}$
    let $R_{\bar{\chi}}$ be the corresponding finite collection of points 
      $\setfromto{\chi_1}{\chi_r} \subset X$ 
      (ignoring  possible repetitions). 
   Then $\dim(S/\idealof(R_{\bar{\chi}}))_D \leqslant h_{r,X}(D)$ for any $D\in \Pic(X)$. 
   Moreover, for a very general tuple
   $\bar{\chi} \in X^{\times r}$ and for all $D$ 
   we have equality:
   $\dim(S/\idealof(R_{\bar{\chi}}))_D = h_{r,X}(D)$. 
\end{lemma}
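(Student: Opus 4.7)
The plan is to identify $\dim(S/\idealof(R_{\bar\chi}))_D$ with the rank of an evaluation map and then invoke standard semicontinuity. Define
\[
   \mathrm{ev}_{\bar\chi, D} \colon S_D = H^0(\ccO_X(D)) \longrightarrow \bigoplus_{i=1}^{r}\ccO_X(D)|_{\chi_i},\qquad s \longmapsto (s(\chi_1), \dots, s(\chi_r))
\]
(after choosing trivialisations of the fibres). Its kernel equals $\idealof(R_{\bar\chi})_D$, since a section vanishes at each $\chi_i$ iff it vanishes on the set $R_{\bar\chi}$. Therefore $\dim(S/\idealof(R_{\bar\chi}))_D$ is the rank of $\mathrm{ev}_{\bar\chi, D}$, which is bounded above by $\dim S_D$ (the rank of the source) and by $r$ (the rank of the target), giving the first inequality.

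For the equality statement, I would fix $D \in \Pic(X)$ and argue that $V_D := \{\bar\chi \in X^{\times r} \mid \mathrm{rk}(\mathrm{ev}_{\bar\chi, D}) = h_{r,X}(D)\}$ is a non-empty Zariski open subset of $X^{\times r}$. Openness follows from lower semicontinuity of the rank of a morphism of vector bundles, once the $\mathrm{ev}_{\bar\chi,D}$ are assembled into the single morphism $S_D \otimes \ccO_{X^{\times r}} \to \bigoplus_{i=1}^{r} p_i^{*}\ccO_X(D)$ on $X^{\times r}$ (with $p_i$ the $i$-th projection). For non-emptyness I would construct a tuple realising $h_{r,X}(D)$ inductively: assuming $\chi_1, \dots, \chi_k$ impose $k$ independent conditions on $|D|$ and $k < \min(r, \dim S_D)$, the space of sections of $\ccO_X(D)$ vanishing at these points has dimension $\dim S_D - k \geqslant 1$, and any non-zero such section has a proper zero locus in the irreducible variety $X$; a point $\chi_{k+1}$ outside this zero locus imposes one new independent condition. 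Once $k = \min(r, \dim S_D)$ is reached, any remaining $\chi_{k+1}, \dots, \chi_r$ work. (If $S_D = 0$, there is nothing to check since $h_{r,X}(D) = 0$.)

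Finally, to combine the multidegrees, I would use that $\Pic(X) \simeq \ZZ^w$ is countable and that $X^{\times r}$ is irreducible (as $X$ is a toric variety, hence irreducible). Then $X^{\times r} \setminus \bigcap_{D \in \Pic(X)} V_D$ is a countable union of proper Zariski closed subsets, and any $\bar\chi$ avoiding this union, i.e.\ a very general tuple, satisfies equality in all multidegrees simultaneously.

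The one subtlety I anticipate is the semicontinuity argument when some $\chi_i$ coincide, so that $|R_{\bar\chi}| < r$. The key observation is that repeated target summands factor through the distinct ones, so the rank of the ``unreduced'' evaluation map $S_D \to \bigoplus_{i=1}^r\ccO_X(D)|_{\chi_i}$ still agrees with $\dim(S/\idealof(R_{\bar\chi}))_D$. This lets one apply vector bundle semicontinuity uniformly on $X^{\times r}$ without stratifying by the combinatorial type of the tuple.
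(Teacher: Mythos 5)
Your proof is correct, and while it follows the same skeleton as the paper's argument -- for each fixed $D$ show that the equality locus in $X^{\times r}$ is open and non-empty, then intersect the countably many conditions as $D$ runs over $\Pic(X)$ -- the two key steps are implemented differently. The paper obtains openness from its general semicontinuity statement for flat families (Proposition~\ref{prop_semicontinuities_for_families_of_ideals}\ref{item_Hilb_func_of_saturations_is_lower_semicont}) and only asserts the existence of a configuration achieving $h_{r,X}(D)$, whereas you recast $\dim(S/\idealof(R_{\bar{\chi}}))_D$ as the rank of the bundle morphism $S_D\otimes\ccO_{X^{\times r}}\to \bigoplus_{i=1}^{r}p_i^{*}\ccO_X(D)$, get openness from determinantal (rank) semicontinuity, and produce a witness tuple by the explicit induction ``pick the next point off the zero locus of a surviving section'' (which needs only that $X$ is integral). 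Your route is more elementary and self-contained, and your final remark -- that the unreduced evaluation map still computes $\dim(S/\idealof(R_{\bar{\chi}}))_D$ when points collide -- lets you work uniformly over all of $X^{\times r}$; this neatly sidesteps the fact that the universal family of $r$-tuples is not flat along the big diagonal, which is the one delicate point if one wants to invoke the paper's flatness-based proposition on the whole of $X^{\times r}$ (the paper's conclusion is unaffected, since openness of the equality locus restricted to the locus of pairwise distinct tuples already yields a non-empty open subset, whose closed complement contains all bad tuples). What the paper's choice buys is brevity and economy of tools: the same semicontinuity proposition also drives Propositions~\ref{prop_saturated_is_open} and~\ref{prop_motivation_for_h_r_X}, so the lemma reduces to a two-line citation there, while your argument supplies the existence step that the paper leaves implicit.
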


It is also true that a general configuration of points 
also has the  Hilbert function $h_{r,X}$, 
not only a very general one: see Subsection~\ref{sec_saturation_open}.

\begin{prf}
   The first claim of the lemma (inequality) is clear.

   Similarly to the proof of 
   Proposition~\ref{prop_saturated_is_open},
   for each $D\in \Pic(X)$
     we can find a configuration $R\subset X$  of $r$ 
     points with $\dim\,(S/\idealof(R))_D= h_{r,X}(D)$
     and this is an open condition on $X^{\times r}$ by 
     Proposition~\ref{prop_semicontinuities_for_families_of_ideals}\ref{item_Hilb_func_of_saturations_is_lower_semicont}.
   Intersecting (countably many of) these open conditions (for all $D$) we obtain  of the second claim (equality).
\end{prf}

\begin{example}
When $X=\PP^{n}=\PPof{V}$ and $i\in \NN \subset \Pic(X)$ we get
\[
  h_{r,\PP^{n}}(i) = \min \left( r, \binom{n+i}{n}\right) = \min \left( r, \dim S^i V \right).
\]
In this case it is well known that
(in the proof of Lemma~\ref{lem_hilb_function_of_very_general_tuple})
it suffices to intersect finitely many open conditions 
(for $i=\fromto{1}{r-2}$) to obtain $h_{r,\PP^{n}}$ 
as the Hilbert function, thus the condition is not only dense,
  but also open.
\end{example}

\begin{example}
If $X=\PP^{a}\times \PP^{b}\times \PP^{c}=\PPof{A}\times \PPof{B}\times\PPof{C}$ and $(i,j,k)\in \NN^3 
\subset \Pic(X)$ 
then the generic Hilbert function of $r$ points is
\begin{align*}
  h_{r,\PP^{a}\times \PP^{b}\times \PP^{c}}(i,j,k) &= \min \left( r, \binom{a+i}{a}\binom{b+j}{b}\binom{c+k}{c} \right)
  \\
  &= \min \Bigl( r, \dim 
  \left(S^i A\otimes S^j B\otimes S^k C\right) \Bigr).
\end{align*}
\end{example}

\begin{lemma}\label{lem_set_theoretical_union}
   Suppose $Y$ is a set and $Y= Y_1  \cup \dotsb \cup Y_k$
     for some subsets $Y_i \subset Y$.
   Suppose $Z\subset Y$ is a subset such that for any  
     $z, z'\in Z$ there exists  $i\in \setfromto{1}{k}$ 
       such that both $z$ and $z'$ are in $Y_i$.
  Then $Z\subset Y_j$ for some $j\in \setfromto{1}{k}$.
\end{lemma}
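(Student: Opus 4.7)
The plan is to proceed by induction on $k$, the number of pieces in the cover. The base case $k = 1$ is immediate, since $Y = Y_1$ forces $Z \subset Y_1$.

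For the inductive step I would distinguish two cases. First, if there exists $z_0 \in Z$ with $z_0 \notin Y_1 \cup \dotsb \cup Y_{k-1}$, then $z_0 \in Y_k$ and $z_0$ lies in no earlier piece of the cover. Applying the pairwise containment hypothesis to the pair $\{z_0, z\}$ for an arbitrary $z \in Z$ produces an index $i$ with $z_0, z \in Y_i$; since $z_0 \notin Y_i$ for $i < k$, this index is forced to equal $k$, so $z \in Y_k$. This yields $Z \subset Y_k$ and settles the case.

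In the complementary case $Z \subset Y_1 \cup \dotsb \cup Y_{k-1}$, the natural move is to apply the inductive hypothesis to the restricted cover $Y_1, \dotsc, Y_{k-1}$ of $Z$. The main obstacle here — and what I expect to be the technical heart of the argument — is to verify that the pairwise containment condition persists after discarding $Y_k$: given $z, z' \in Z$, one needs a common container $Y_i$ with index $i < k$, and this is not automatic from the original hypothesis. The cleanest way to address this is to sharpen the dichotomy in advance so that the anchor argument of the previous paragraph already disposes of any pair whose only joint container is $Y_k$; the remaining instances then fall within the scope of the reduced cover and induction applies. Once this case split is set up carefully, the remainder is formal bookkeeping, with no algebraic or geometric input required — the statement is purely combinatorial.
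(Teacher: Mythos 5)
Your base case and your first case are sound: if some $z_0\in Z$ lies in $Y_k$ but in none of $Y_1,\dotsc,Y_{k-1}$, then every joint container of $z_0$ with an arbitrary $z\in Z$ must be $Y_k$, so $Z\subset Y_k$. But the obstacle you flag in the complementary case is not bookkeeping that a sharper case split will absorb --- it is fatal, because the statement as written is false. Take $Y=Z=\{1,2,3\}$ with $Y_1=\{1,2\}$, $Y_2=\{2,3\}$, $Y_3=\{1,3\}$: every pair of elements of $Z$ lies in a common $Y_i$, yet $Z$ is contained in no single $Y_j$. In this example each point of $Z$ lies in exactly two of the three sets, so your anchor case never triggers no matter how the indices are ordered, and the pair $\{1,3\}$ has $Y_3$ as its only joint container --- precisely the configuration you admit you cannot push into the reduced cover. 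No ``sharpening of the dichotomy'' can dispose of it: for $k\geqslant 3$ the pairwise hypothesis simply does not imply the conclusion, so the step you left open cannot be closed.

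For comparison, the paper's own argument has the same defect: it inducts on $k$ by replacing the cover with $\{Y_i \mid i\in\operatorname{comp}(z_0)\}$, where $z_0\in Z$ lies in the fewest sets, and tacitly assumes that the pairwise hypothesis persists for this smaller cover; the example above defeats that step as well. The lemma does become true under a stronger hypothesis, namely that every finite subset of $Z$ lies in a common $Y_i$: if $Z\not\subset Y_i$ for every $i$, choose $z_i\in Z\setminus Y_i$ for each $i$, and note that the finite set $\{z_1,\dotsc,z_k\}$ lies in some $Y_j$, contradicting $z_j\notin Y_j$. In the one place the lemma is invoked, Proposition~\ref{prop_motivation_for_h_r_X}, the intended conclusion --- that $\Sip_{r,X}$ lies in a single irreducible component --- is better obtained by an irreducibility argument: $\Sip_{r,X}$ is the image of the irreducible locus in $X^{\times r}$ of tuples whose ideal has Hilbert function $h_{r,X}$, under the morphism to the multigraded Hilbert scheme induced by the flat family of those ideals, and the image of an irreducible set is irreducible, hence contained in a unique component. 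So your instinct that something nontrivial was being swept under the rug in the second case was exactly right; the fix is to change the statement (or the application), not the bookkeeping.
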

\begin{proof}
  If $k=1$, then there is nothing to prove.
  For each $z\in Z$ let $\operatorname{comp}(z)\in 2^{\setfromto{1}{k}}$
    be  $\set{i \mid z\in Y_i}$.
  Define $l := \min\set{ \# \operatorname{comp}(z) : z \in Z}$.
  If $l=k$, then the claim is proved.

  Otherwise, pick $z_0$ such that $\# \operatorname{comp}(z_0) = l$. 
  Let $Y' := \bigcup \set{Y_i \mid i \in \operatorname{comp}(z_0)}$.
  By the assumptions of the lemma, $Z\subset Y'$.
  Thus, we can replace $Y$ with $Y'$ which is a union of fewer subsets 
   $Y_i$, and argue by induction on $k$.
\end{proof}

Let $\Sip_{r,X} \subset \reduced{(\Hilb^{h_{r,X}}_S)}$ 
  be the subset consisting of saturated ideals of 
  $r$ distinct points in $X$.

\begin{prop}\label{prop_motivation_for_h_r_X}
There is a unique component of the multigraded Hilbert scheme 
   $\reduced{(\Hilb^{h_{r,X}}_S)}$, 
   that contains  $\Sip_{r,X}$ as a dense subset.
\end{prop}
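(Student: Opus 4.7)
My approach is to exhibit $\Sip_{r,X}$ as the image of an irreducible parameter space and then identify its closure $\overline{\Sip_{r,X}}$ with an irreducible component of $\reduced{(\Hilb^{h_{r,X}}_S)}$. Uniqueness will then follow immediately, since any component $\sH'$ in which $\Sip_{r,X}$ is dense must coincide with $\overline{\Sip_{r,X}}$.

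For the parameterization, let $U \subset X^{\times r}$ be the subset of tuples $\bar\chi = (\chi_1, \ldots, \chi_r)$ with pairwise distinct entries such that $\dim(S/\idealof(R_{\bar\chi}))_D = h_{r,X}(D)$ for every $D \in \Pic(X)$. By Lemma~\ref{lem_hilb_function_of_very_general_tuple} every very general tuple of $X^{\times r}$ lies in $U$, so $U$ is dense in $X^{\times r}$. Since $X$ is smooth projective toric, hence irreducible, both $X^{\times r}$ and $U$ are irreducible. The universal property of the multigraded Hilbert scheme produces a morphism $\varphi \colon U \to \reduced{(\Hilb^{h_{r,X}}_S)}$, $\bar\chi \mapsto \idealof(R_{\bar\chi})$, whose image is precisely $\Sip_{r,X}$. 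Hence $\Sip_{r,X}$ is irreducible, and so $\overline{\Sip_{r,X}}$ is irreducible closed in $\reduced{(\Hilb^{h_{r,X}}_S)}$. Decomposing the reduced Hilbert scheme into its finitely many irreducible components $\sH_1, \ldots, \sH_k$, the irreducibility of $\overline{\Sip_{r,X}}$ applied to the closed decomposition $\overline{\Sip_{r,X}} = \bigcup_i \bigl(\overline{\Sip_{r,X}} \cap \sH_i\bigr)$ forces $\overline{\Sip_{r,X}} \subseteq \sH_j$ for some single $j$; alternatively one can invoke Lemma~\ref{lem_set_theoretical_union} after connecting any two points of $\Sip_{r,X}$ by the image of an irreducible curve in $U$.

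The \emph{main obstacle} is the reverse inclusion $\sH_j \subseteq \overline{\Sip_{r,X}}$. I would attack it via a Zariski tangent space calculation at a sufficiently general $I = \idealof(R) \in \Sip_{r,X}$, with $R$ consisting of $r$ distinct smooth points of $X$. Since $\dim U = r \dim X$, it suffices to show that $\dim T_I \reduced{(\Hilb^{h_{r,X}}_S)} = r \dim X$, for then $\sH_j$ is smooth of that dimension at $I$ and must equal $\overline{\Sip_{r,X}}$. The delicate point is to rule out first-order deformations of $I$ that are non-saturated yet preserve the multigraded Hilbert function $h_{r,X}$; for this I would invoke Proposition~\ref{prop_saturated_is_open}, which guarantees that saturated ideals are dense in $\sH_j$, thereby reducing the infinitesimal analysis to deformations of the underlying length-$r$ subscheme $R \subset X$ inside the classical Hilbert scheme of points, where the dimension count $r \dim X$ is standard for a smooth ambient variety.
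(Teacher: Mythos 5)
Your first step---producing $\Sip_{r,X}$ as the continuous image of an irreducible parameter space---has a gap exactly where the paper is careful. The locus $U\subset X^{\times r}$ of tuples whose ideal has Hilbert function $h_{r,X}$ is, by Lemma~\ref{lem_hilb_function_of_very_general_tuple} and Proposition~\ref{prop_semicontinuities_for_families_of_ideals}\ref{item_Hilb_func_of_saturations_is_lower_semicont}, only a countable intersection of dense open subsets (``very general''); within this paper it is \emph{not} known to be Zariski open (that strengthening is deliberately deferred to Subsection~\ref{sec_saturation_open}). Hence $U$ is not a scheme and the universal property of the multigraded Hilbert scheme cannot be invoked to produce $\varphi\colon U\to\reduced{(\Hilb^{h_{r,X}}_S)}$; nor can you replace $U$ by the open locus of distinct tuples, since there the family of ideals $\idealof(\ccZ)$ is not flat (its fibrewise Hilbert function jumps, cf.\ the four-points example and Lemma~\ref{lem_flatness_is_constant_Hilert_function}), so no morphism to $\Hilb^{h_{r,X}}_S$ exists on it. Your parenthetical alternative is essentially the paper's actual argument: connect two given configurations by maps from a curve $B$ (rational connectedness of $X$), check that $S\otimes\ccO_B/\ccJ$ is flat over $B$ for the ideal $\ccJ$ of the union of the graphs (\cite[Prop.~III.9.7]{hartshorne}), and deduce from flatness plus the two endpoint conditions that the constant Hilbert function is $h_{r,X}$; then Lemma~\ref{lem_set_theoretical_union} puts all of $\Sip_{r,X}$ in one component. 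The flatness/constancy verification you omit is the mathematical content here, and the curve need not (and should not be required to) lie inside the very general locus $U$.

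The more serious gap is the reverse inclusion $\sH_j\subseteq\overline{\Sip_{r,X}}$. Your plan hinges on the unproved claim that $\dim T_I\reduced{(\Hilb^{h_{r,X}}_S)}=r\dim X$ at a general $I\in\Sip_{r,X}$, and the justification offered---invoking Proposition~\ref{prop_saturated_is_open} to ``rule out'' non-saturated first-order deformations---is a non sequitur: density of saturated ideals in a component is a global statement with no infinitesimal content; it neither bounds $\Hom_S(I,S/I)_0$ nor identifies it with the space of deformations of $R$ inside $X$, and it does not exclude excess tangent directions or other components of the (possibly non-reduced) Hilbert scheme through $I$. Note also that saturated ideals with Hilbert function $h_{r,X}$ may define non-reduced schemes, so even a reduction to saturated deformations would not by itself land you in the standard smooth locus of the Hilbert scheme of points; and your final identification needs $\dim\overline{\Sip_{r,X}}=r\dim X$, which again leans on the morphism from the first gap. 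The paper's density argument is much softer and needs none of this: inside the irreducible component containing $\Sip_{r,X}$, the conditions $\dim(S/I^{\sat})_D=h_{r,X}(D)$ for all $D$ (each open by Proposition~\ref{prop_semicontinuities_for_families_of_ideals}\ref{item_Hilb_func_of_saturations_is_lower_semicont}) together with reducedness of $\zeroscheme(I)$ cut out exactly $\Sip_{r,X}$; each condition is non-empty because it contains $\Sip_{r,X}$, so the countable intersection contains a very general point of the component and is therefore dense over $\CC$. As written, your proposal replaces this two-line Baire-type argument by a deformation-theoretic statement that is neither proved nor reducible to the propositions you cite.
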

\begin{proof}
   We must show that
   \begin{itemize}
     \item $\Sip_{r,X}$ is contained in 
            a single irreducible component 
            of the multigraded Hilbert scheme, and
     \item $\Sip_{r,X}$ is dense in that component.
   \end{itemize}
   To prove the first item, pick two points
      $I,I'\in \Sip_{r,X}$, that is, two saturated ideals of $r$-tuples of points, 
      $I= \idealof(\setfromto{\chi_1}{\chi_r})$,
      $I'= \idealof(\setfromto{\chi'_1}{\chi'_r})$, having the Hilbert function $h_{r,X}$.
   Pick a smooth integral curve $B$ 
      which can be used to connect $\chi_i$ 
      to $\chi'_i$ for any $i$.
   That is, pick morphisms $\psi_i\colon B \to X$ 
      and two points $b, b'\in B$  
      such that $\psi_i(b)=\chi_i$ 
      and $\psi_i(b')=\chi'_i$.
   (Since $X$ is a toric variety, 
      it is rationally connected, 
      so it is enough to take $B=\AAA^1=\Spec \CC[t]$.)
   
   Consider the sheaves of homogeneous ideals 
      $\ccJ_i\subset S\otimes \ccO_B$ defining 
      $(\psi_i \times \id_B)(B) \subset X\times B$.
   Each $S\otimes \ccO_B /\ccJ_i$ is flat 
     and has Hilbert function $h_{1,X}$.
   The affine zero-set of $\ccJ_i$ in $\Spec S \times B$
     is reduced and irreducible.
   Let 
   \[
      \ccJ = \textstyle\bigcap_{i=1}^r \ccJ_i 
      = \idealof\Bigl(\textstyle\bigcup_{i=1}^{r} 
        (\psi_i \times \id_B)(B)\Bigr).
   \]
   In particular, $\Spec (S\otimes \ccO_B /\ccJ)$
     is flat over $B$ by \cite[Prop.~III.9.7]{hartshorne}.
   Therefore, the algebra of each fibre of 
     $\Spec (S\otimes A / \ccJ) \to B$
     has a constant Hilbert function $h$ (see Lemma~\ref{lem_flatness_is_constant_Hilert_function}).
   By construction ($\ccJ$ is an intersection of $r$ ideals, 
     each has codimension at most $1$ in each degree)
     we must have $h \leqslant h_{r,X}$ (for each argument $D\in \Pic(X)$)
     and on the other hand 
     there are fibres (over $b$ and $b'$) that have 
     Hilbert function at least $h_{r,X}$ 
     (their saturations have Hilbert function $h_{r,X}$). 
   Thus $h=h_{r,X}$, 
      and the flat family of ideals determines 
      a morphism $B \to \reduced{(\Hilb^{h_{r,X}}_S)}$ 
      connecting $I$ and $I'$, 
      and showing that they are in 
      the same irreducible component.
  Since the choice of $I$ and $I'$ was arbitrary, 
      it follows that all of $\Sip_{r,X}$ is contained in a single irreducible component $\ccH$ by Lemma~\ref{lem_set_theoretical_union}.
      
   The proof of the second item is again similar to the proof of Proposition~\ref{prop_saturated_is_open}:
   $\Sip_{r,X}$ is non-empty by 
   Lemma~\ref{lem_hilb_function_of_very_general_tuple}.
   Moreover, the condition defining $\Sip_{r,X}$ in the component constructed above is the intersection of countably many open conditions on the Hilbert function of the saturation and an additional one on the reducedness of $\zeroscheme(I)$. 
\end{proof}

\begin{notation}
  We will use the following notation motivated by Lemma~\ref{lem_hilb_function_of_very_general_tuple} and 
  Proposition~\ref{prop_motivation_for_h_r_X}:
  \begin{itemize}
    \item The function $h_{r,X}$ is the 
          \emph{generic Hilbert function of $r$ points on $X$}.
    \item Any ideal $I\in \Hilb^{h_{r,X}}_S$
          that belongs to the set $\Sip_{r,X}$  
          from Proposition~\ref{prop_motivation_for_h_r_X}
          (that is, $I$ is saturated and defines $r$ points of $X$)
          is called an \emph{``ip''} 
          (which stands for an \emph{ideal of points}, 
            implicitly, \emph{with a generic Hilbert function}).
    \item The abbreviation \emph{``sip''} in $\Sip_{r,X}$ stands 
            for \emph{set of ideals of points}.
    \item Any ideal in the (unique) irreducible component 
          of the multigraded Hilbert scheme $\Hilb^{h_{r,X}}_S$
          containing $\Sip_{r,X}$ is a \emph{lip} (\emph{limit of ideals of points}).
    \item The component containing $\Sip_{r,X}$ 
          is called the \emph{slip} (\emph{scheme of limits of ideals of points})
          and denoted $\Slip_{r,X}$. 
    \end{itemize}
\end{notation}

Thus $\Slip_{r,X}=\overline{\Sip_{r,X}}$ and it is an irreducible component of the multigraded Hilbert scheme. 
Very roughly, this component parameterises ideals of $r$-tuples of points in $X$ (ip) together with the limits of such ideals (lip).

The following is the analogue of the multigraded apolarity 
   (Proposition~\ref{prop_multigraded_apolarity}) for border rank.

\begin{thm}[Border apolarity]\label{thm_nonsaturated_apolarity}
Consider a smooth toric projective variety $X$ embedded in $\PP (H^0(L)^*)=\PP (\widetilde{S}_L)$.
   Suppose $F \in \widetilde{S}_L$ is a homogeneous polynomial of degree $L$.
   Then the border rank $\borderrank F$ is at most $r$ if and only if there exists a lip     $I \in \Slip_{r,X}$ such that $I \subset \apolar$. 
\end{thm}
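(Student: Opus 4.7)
The plan is to realise $\sigma_r(X)$ as the image of a single projective incidence variety sitting over $\Slip_{r,X}$, and to read off both directions of the equivalence from this.

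First, I would introduce the incidence
\[
  W := \set{(I,[F]) \in \Slip_{r,X} \times \PPof{\widetilde{S}_L} \mid F \in I_L^{\perp}}.
\]
Every $I\in\Slip_{r,X}$ has the same Hilbert function $h_{r,X}$, so by Lemma~\ref{lem_flatness_is_constant_Hilert_function} the universal family of ideals is flat over $\Slip_{r,X}$; hence $I_L \subset S_L$ forms a subbundle of the trivial bundle $S_L\otimes\ccO_{\Slip_{r,X}}$, and its perpendicular $I_L^{\perp}$ is a subbundle of $\widetilde{S}_L\otimes\ccO_{\Slip_{r,X}}$ of constant rank $h_{r,X}(L)$. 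Therefore $W$ is the projectivisation of this bundle and is itself projective. By Proposition~\ref{prop_apolar_in_degree_deg_F_vs_apolar}, the defining condition $F\in I_L^{\perp}$ is equivalent to $I\subset\apolar$. Writing $\pi$ for the projection to $\PPof{\widetilde{S}_L}$, the theorem then reduces to proving $\pi(W)=\sigma_r(X)$.

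For $\sigma_r(X)\subset \pi(W)$, I would use that Lemma~\ref{lem_hilb_function_of_very_general_tuple} gives, for a very general tuple $\bar\chi\in X^{\times r}$, an ideal $I_{\bar\chi}:=\idealof(R_{\bar\chi}) \in \Sip_{r,X}\subset \Slip_{r,X}$; and by Remark~\ref{rem_interactions_between_all_sorts_of_things} its perpendicular $I_{\bar\chi,L}^{\perp}$ is exactly $\linspan{R_{\bar\chi}}$ inside $\widetilde{S}_L$. Hence each span $\linspan{R_{\bar\chi}}$ lies pointwise in $\pi(W)$, and these sweep out a dense subset of $\sigma_r(X)$; since $\pi(W)$ is closed (being the image of a proper map of projective varieties), $\sigma_r(X)\subset \pi(W)$. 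Conversely, given $(I_0,[F_0])\in W$, density of $\Sip_{r,X}$ in $\Slip_{r,X}$ (Proposition~\ref{prop_motivation_for_h_r_X}) yields a smooth pointed curve $(B,0)$ and a morphism $\phi\colon B\to \Slip_{r,X}$ with $\phi(0)=I_0$ and $\phi(B\setminus\{0\})\subset \Sip_{r,X}$. The pulled back projective bundle $\phi^{*}W\to B$ admits a local section through $(0,[F_0])$; projecting it to $\PPof{\widetilde{S}_L}$ produces a curve $[F_t]$ tending to $[F_0]$. For $t\neq 0$, $I_{\phi(t)}\in \Sip_{r,X}$ is the saturated ideal of an $r$-tuple $R_t\subset X$, and $F_t\in I_{\phi(t),L}^{\perp}=\linspan{R_t}$, so $\rank F_t\leqslant r$; passing to the limit $t\to 0$ gives $[F_0]\in \sigma_r(X)$.

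The technical heart of the argument is the section-extension step in the converse direction: it forces us to work with a stratum of the multigraded Hilbert scheme on which the Hilbert function is literally constant, so that the perpendiculars $I_L^{\perp}$ genuinely form a vector bundle rather than a merely coherent sheaf, and a local section through a prescribed point exists. This is the structural reason for packaging border rank through the particular Hilbert function $h_{r,X}$ and the distinguished component $\Slip_{r,X}$; every other ingredient (the Apolarity Lemma~\ref{prop_multigraded_apolarity}, Proposition~\ref{prop_apolar_in_degree_deg_F_vs_apolar}, and density of $\Sip_{r,X}$ in $\Slip_{r,X}$) is already available.
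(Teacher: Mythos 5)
Your proposal follows essentially the same route as the paper: your incidence $W$ is exactly the $\ccU$ of Lemma~\ref{lem_strong_nonsaturated_apolarity}, you use the constancy of the Hilbert function on $\Slip_{r,X}$ to make $I\mapsto I_L^{\perp}$ a regular map into a Grassmannian (so $W$ is a projective $\PP^{r'-1}$-bundle over $\Slip_{r,X}$), you translate the fibre condition via Proposition~\ref{prop_apolar_in_degree_deg_F_vs_apolar}, and you establish $\pi(W)=\sigma_r(X)$ by (i) very general $r$-tuples give a dense family of spans and $\pi(W)$ is closed, and (ii) a limiting construction over $\Slip_{r,X}$ for the reverse inclusion. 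That is structurally identical to the paper's proof of Lemma~\ref{lem_strong_nonsaturated_apolarity} together with the deduction of the theorem.

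The one place you go beyond what the paper establishes is the sentence asserting a smooth pointed curve $(B,0)\to\Slip_{r,X}$ with $\phi(0)=I_0$ and $\phi(B\setminus\{0\})\subset\Sip_{r,X}$. The paper only shows that $\Sip_{r,X}$ contains the \emph{very general} point of $\Slip_{r,X}$, i.e.\ its complement is a countable union of proper Zariski closed subsets; the Zariski openness of $\Sip_{r,X}$ is deliberately deferred to Subsection~\ref{sec_saturation_open}. A curve through $I_0$ that lies in none of those countably many closed subsets still meets their union in a countable set of points of $B$, so you cannot in general claim the \emph{whole} punctured curve maps into $\Sip_{r,X}$. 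This is easy to repair: you only need a sequence $t_k\to 0$ with $\phi(t_k)\in\Sip_{r,X}$, and the countable complement of a countable union of proper Zariski closed subsets is analytically dense in $B$ (Baire), so such $t_k$ exist; passing to the limit of $[F_{t_k}]$ gives $[F_0]\in\sigma_r(X)$. The paper avoids the issue entirely by arguing with an analytic sequence $I^k\to I_0$ in $\Slip_{r,X}$ directly rather than through a curve. Either fix is fine; the rest of your argument is correct.
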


The theorem is a corollary from the following statement.

\begin{lemma}\label{lem_strong_nonsaturated_apolarity}
   Fix a positive integer $r$ and a line bundle $L\in \Pic(X)$ and  set $r': = h_{r,X}(L)$.
   Let $\Slip_{r,X}$ be the irreducible component of the multigraded Hilbert scheme as above.
   Let $\sigma_r:=\sigma_r(X)\subset \PP(H^0(L)^*)$ be the secant variety of $X$  embedded via $L$.
   Denote by $Gr:= Gr\left(\PP^{r'-1}, \PP (H^0(L)^*)\right)$ the Grassmannian of projective linear subspaces $\PP^{r'-1}$ in  $\PP (H^0(L)^*)$.
   Then:
   \begin{itemize}
    \item The natural map $\rho \colon \Slip_{r,X} \to Gr$ 
            taking a homogeneous ideal $I$ to  $I_L^{\perp}\subset H^0(L)^*$ is regular.
    \item Define $\ccU \subset \Slip_{r,X} \times \PP(H^0(L)^*)$ to be the pullback via $\rho$ 
            of the universal subbundle:
            \[
               \ccU = \set{(I, [F]) \mid I \in \Slip_{r,X}, [F] \in \PP (H^0(L)^*) , [F] \in \rho(I)}.
            \]
           Then the secant variety $\sigma_r(X)$ is equal to the image of $\ccU$ under the projection $\ccU \to \PP (H^0(L)^*)$ on the second factor:
           \[
              \sigma_r(X) = \set{[F]\in \PP(H^0(L)^*) \mid \exists I \in \Slip_{r,X} \text{ such that } [F] \in \rho(I)}.
           \]
   \end{itemize}
\end{lemma}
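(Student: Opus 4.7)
The approach is to derive $\rho$ from the universal family on the multigraded Hilbert scheme, observe that the incidence variety $\ccU$ is then a projective bundle over $\Slip_{r,X}$, and finally compare $\pi_2(\ccU)$ with $\sigma_r(X)$ through the dense open locus $W := \pi_1^{-1}(\Sip_{r,X})\subset\ccU$. For the regularity of $\rho$, I would start with the universal homogeneous ideal sheaf $\ccI^{\mathrm{univ}}\subset S\otimes\ccO_{\Hilb^{h_{r,X}}_S}$. Because the Hilbert function of $S/\ccI^{\mathrm{univ}}$ is identically $h_{r,X}$, Lemma~\ref{lem_flatness_is_constant_Hilert_function} forces $\ccI^{\mathrm{univ}}_L$ to be a vector subbundle of $S_L\otimes\ccO$ of constant corank $r'$, whose annihilator $(\ccI^{\mathrm{univ}}_L)^{\perp}$ is a rank-$r'$ subbundle of $H^0(L)^*\otimes\ccO$. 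The universal property of the Grassmannian then produces the desired morphism to $Gr$; restricting to $\Slip_{r,X}$ (an irreducible component of $\reduced{(\Hilb^{h_{r,X}}_S)}$, hence reduced) yields $\rho$.

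Consequently $\ccU$, the pullback along $\rho$ of the tautological $\PP^{r'-1}$-bundle over $Gr$, is a projective bundle over the irreducible projective variety $\Slip_{r,X}$, so is itself projective and irreducible. In particular $\pi_2$ is proper, and $\pi_2(\ccU)$ is a closed irreducible subset of $\PP(H^0(L)^*)$ equal to $\overline{\pi_2(W)}$. By Proposition~\ref{prop_motivation_for_h_r_X}, $\Sip_{r,X}$ is dense in $\Slip_{r,X}$, so $W$ is dense in $\ccU$. For $I=\idealof(R)\in\Sip_{r,X}$, Remark~\ref{rem_interactions_between_all_sorts_of_things} identifies the projective subspace $\rho(I)\subset\PP(H^0(L)^*)$ with $\linspan{R}$, the span of a configuration of $r$ distinct points of $X$; every element of this span has $X$-rank at most $r$. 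Thus $\pi_2(W)\subset\sigma_r(X)$, and hence $\pi_2(\ccU)=\overline{\pi_2(W)}\subset\sigma_r(X)$.

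For the reverse inclusion, since $\pi_2(\ccU)$ is closed and $\sigma_r(X)$ is by definition the closure of $\set{[F]:\rank F\leq r}$, it suffices to place every rank-$\leq r$ element $[F]$ into $\pi_2(\ccU)$. Writing $F=\sum_{i=1}^{s}\lambda_i p_i$ with distinct $p_i\in X$ and $s\leq r$, I would pad by arbitrary $p_{s+1},\ldots,p_r\in X$ (with zero coefficient) to obtain $R:=(p_1,\ldots,p_r)$ with $[F]\in\linspan{R}$, and invoke Lemma~\ref{lem_hilb_function_of_very_general_tuple} (whose conclusion holds on a dense subset of $X^{\times r}$) to pick a family $R_t=(p_i^{(t)})_i$ with $R_t\to R$ as $t\to 0$ and $R_t\in\Sip_{r,X}$ for $t\ne 0$. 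Setting $F_t:=\sum_{i=1}^{s}\lambda_i p_i^{(t)}\in\linspan{R_t}$ gives $[F_t]\in\pi_2(W)$ with $F_t\to F$, so $[F]\in\overline{\pi_2(W)}=\pi_2(\ccU)$. The delicate step here is that one cannot in general complete a special $s$-tuple to an $r$-tuple of generic Hilbert function merely by appending further points — five collinear points in $\PP^2$ cannot be extended to six points achieving $h_{6,\PP^2}(2)=6$, for instance — so it is essential that the perturbation be allowed to move the original witnessing points as well, and this flexibility is precisely what the density statement of Lemma~\ref{lem_hilb_function_of_very_general_tuple} licenses.
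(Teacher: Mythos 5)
Your proposal is correct and follows essentially the same structure as the paper's argument: regularity of $\rho$ via the universal property, projectivity of $\ccU$ to conclude the image is closed, and density of the ideals of points in $\Slip_{r,X}$ to obtain both containments. The one place you diverge is the reverse inclusion $\sigma_r(X)\subset\pi_2(\ccU)$: the paper picks a \emph{very general} point of $\sigma_r(X)$, observes that it lies in the span of $r$ points in very general position, and concludes by density plus closedness of the image, whereas you instead show directly that \emph{every} point of rank at most $r$ is a limit of points of $\pi_2(W)$ by perturbing the whole witnessing $r$-tuple into the very general locus of $X^{\times r}$; both succeed, and your argument is slightly more explicit (it places all finite-rank points, not just very general ones, in the image before taking closures), at the cost of having to track lifts to the affine cone and note that the perturbation must move the original witnessing points, not merely append new ones — your five-collinear-points example is exactly the right illustration of why padding alone is insufficient.
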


We stress that there is no closure in the final equation of 
Lemma~\ref{lem_strong_nonsaturated_apolarity}.
We also note that $\ccU$ of the second item is a natural extension of 
the concept of abstract secant variety;
see for instance \cite[\S4.7.3]{landsberg_geometry_and_complexity},  \cite{chiantini_lectures_on_structure_of_projective_embeddings}, 
\cite[p.144]{harris} (where the abstract secant variety is called an 
   \emph{incidence correspondence}),
or \cite[V.1]{zak_tangents} (where it is denoted $S^{r-1}_{X}$).

\begin{prf}
  The natural map in the first item exists and is regular by the universal properties of
  the Grassmannian $Gr$ and of the multigraded Hilbert scheme.
    To prove the second item note that $\Slip_{r,X}$ is projective
       by \cite[Cor.~1.2]{haiman_sturmfels_multigraded_Hilb}, 
       thus $\ccU$ is projective and therefore the image of $\ccU$ under the projection is also closed in $\PP(H^0(L)^*)$.
    Moreover, 
       by Proposition~\ref{prop_motivation_for_h_r_X} 
       a very general lip $I\in \Slip_{r,X}$ 
       is the saturated ideal of $r$ distinct points 
       $\setfromto{p_1}{p_r} \subset X$.
    The fibre 
    $\PP^{r'-1} = \ccU_{I} \subset \set{I}\times \PPof{H^0(L)^*}$
       is the linear span 
       $\linspan{\fromto{p_1}{p_r}} = \PPof{I_L^{\perp}}$.
    That is, $\PP^{r'-1} \subset \sigma_r$, 
       and the image of $\ccU \to \PPof{H^0(L)^*}$
       is contained in $\sigma_r$.
    On the other hand, reversing the above argument, 
       we pick a very general point of 
       $[F]\in \sigma_r$.
    It is contained in the span of $r$ points 
       $\setfromto{p_1}{p_r} \subset X$ in very general position,
       and $I = \idealof(\setfromto{p_1}{p_r})$ 
       is a saturated ideal with Hilbert function $h_{r,X}$
       (Lemma~\ref{lem_hilb_function_of_very_general_tuple}).
    Thus $I\in \Slip_{r,X}$ and $[F]\in \PPof{I_L^{\perp}}$ 
       by the usual apolarity 
       (Proposition~\ref{prop_multigraded_apolarity}) 
       and therefore $[F]$ is in the image of 
       $\ccU\to \PPof{H^0(L)^*}$. 
    Therefore the image of $\ccU\to \PPof{H^0(L)^*}$
       is dense in $\sigma_r$.
 \end{prf}

Lemma~\ref{lem_strong_nonsaturated_apolarity} generalises analogous statements 
   that relate the Hilbert scheme and secant varieties (or cactus varieties) to high degree Veronese varieties
   (see \cite[Prop.~11]{bernardi_gimigliano_ida} or \cite[Prop.~2.5]{nisiabu_jabu_cactus}). 
Here we replace the Hilbert scheme by $\Slip_{r,X}$ and we avoid restrictions on $r$ and the embedding $X\subset \PP(H^0(L)^*)$.
The map $\rho$ is the analogue of the linear span of the zero locus $\linspan {\zeroscheme(I)}$
(see Remark~\ref{rem_interactions_between_all_sorts_of_things}), 
   and it agrees with the (scheme-theoretic) linear span for saturated ideals $I$.
In general, the span $\linspan{\zeroscheme(I)}$ is contained in the linear space $\rho (I)$.

The main advantage of this approach is the lack of ``closure'' in the expression for the secant variety. 
To some extent this is illusory, as we use closure to define the component $\Slip_{r,X}$. 
Nevertheless, an analogous approach turned out to be highly efficient in the setting of 
the Hilbert scheme and $X$ a projective space embedded via the Veronese map of degree $d$ 
with $r\leqslant d-1$.
Subsequent research shows this is also a useful method 
to estimate the border rank of points with large groups of symmetries, such as matrix multiplication tensors
(see~\cite{conner_harper_landsberg_border_apolarity_I}).

\begin{rmk}
   In all the interesting situations one can assume $r'=r$ in the statement of Lemma~\ref{lem_strong_nonsaturated_apolarity}.
   Otherwise, if $r'\ne r$, then $r' = \dim H^0(L)$, so $Gr = \set{\PP(H^0(L)^*)}$ is a single point and the secant variety $\sigma_r$
    is equal to $\PP (H^0(L)^*)$.
   Moreover, any point $[F]\in \PP(H^0(L)^*)$ is in the span of  any sufficiently generic configuration of $r$ points in $X$.
   In other words, any general enough $r$ points span $\PP(H^0(L)^*)$.
   Thus the situation $r' \ne r$ occurs only in very boring cases.
\end{rmk}

\begin{prf}[ of~Theorem~\ref{thm_nonsaturated_apolarity}]
  First suppose that $\borderrank F \leqslant r$. 
  In the setting of Lemma~\ref{lem_strong_nonsaturated_apolarity},
      pick $u \in \ccU$ such that $u = (I, [F])$ 
      with $I \in \Slip_{r, X}$.
  Let $E = \rho (I)$, so that 
      $[F] \in E \simeq \PP^{r'-1} \subset \PPof{H^0(L)^*} $.
  Then $E = \PPof{(I_L)^{\perp}}$, and 
   \[
     [F] \in \PPof{(I_L)^{\perp}} \subset \PPof{H^0(L)^*}.
   \]
   Equivalently, $I_L \subset (\apolar)_L \subset H^0(L)$. 
   By Proposition~\ref{prop_apolar_in_degree_deg_F_vs_apolar}  we must have
      $I \subset \apolar$ as claimed.
 
   Now suppose $I \in \Slip_{r, X}$ is such that $I \subset \apolar$.
   Since $\Sip_{r,X}$ is dense in $\Slip_{r,X}$ 
      in the analytic topology by Proposition~\ref{prop_motivation_for_h_r_X},
      there exists a sequence $I^k\in \Sip_{r,X}$
      such that $I^k \stackrel{k\to \infty}\to I$.
   Suppose that 
      $\zeroscheme(I^k)= \setfromto{p_1^k}{p_r^k} \subset X \subset \PPof{H^0(L)^*}$ 
      (we view $p_i^k$ as elements of $\PPof{H^0(L)^*}$).
   By the assumption $[F] \in \PP(I_{L}^{\perp}) = \rho(I)$, 
       and $\lim_{k\to \infty} \rho(I^k) = \rho (I)$.
   Thus
   \[
      [F] \in \rho(I)  = \lim_{k\to \infty} \rho\big(I^k\big) 
                       = \lim_{k\to \infty} \linspan {\zeroscheme\!\left(I^k\right)} 
= \lim_{k \to \infty} \linspan{\fromto{p_1^k}{p_r^k}}.
   \]
   Hence $F$ can be obtained as a limit of points of rank at most $r$ as claimed.
\end{prf}

\section{Ideals calculating border rank}\label{sec_VSP}

Suppose for a while that $X=\PP^n$ and $L= \ccO_{\PP^n}(d)$ so that $\varphi_{|L|}$ is the $d$-th Veronese embedding.
For a homogeneous polynomial 
   $F \in \widetilde{S}_L =H^0(L)^*\simeq S^d\CC^{n+1}$ 
   and an integer $r$, 
   the \emph{variety of sums of powers} $\mathit{VSP}(F, r)$ is defined as the closure 
     in the standard Hilbert scheme\footnote{For 
     readers not familiar with the notion of standard Hilbert scheme,
     we mention that here it is enough to think of the standard Hilbert scheme
     as a sufficiently nice compactification of the set of unordered tuples of $r$ distinct points of $X$.}
     $\usualHilb_X^r$
     of the set of $r$-tuples 
     $\setfromto{\left[\ell_1\right]}{\left[\ell_r\right]}$ of points in $\PP^n$ 
     such that $[F] \in \linspan{\fromto{\left[\varphi_{|L|}(\ell_1)\right]}{\left[\varphi_{|L|}(\ell_r)\right]}}$.
In other words, $\mathit{VSP}(F,r)$ is responsible for all the solutions to the decomposition problem for $F$ into $r$ simple summands 
   and $\mathit{VSP}(F,r) \ne \emptyset$ if and only if $r(F) \leqslant r$. $\mathit{VSP}$ has been intensively studied in 
   (for instance)
   \cite{ranestad_schreyer_VSP}, \cite{iliev_ranestad_K3_of_genus_8_and_VSP},
   \cite{ranestad_voisin_VSP_and_divisors_in_the_moduli_of_cubic_fourfolds}, 
   \cite{nisiabu_jabu_teitler_Waring_decompositions_of_monomials}.
More generally, for any smooth projective toric variety $X$, an analogue of $\mathit{VSP}$ 
   is considered in \cite{gallet_ranestad_villamizar}.
   
In this section we introduce a border version of $\mathit{VSP}$, 
  which is responsible for the set of solutions to border rank decompositions.

\subsection{Border VSP}

Back to the general situation, 
   let $X$ be a smooth toric projective variety,
   $L$ a very ample line bundle, $r\in \ZZ$, and $F\in H^0(L)^*$.
As recalled above, the $\mathit{VSP}$ is traditionally considered 
   as a subset of the standard Hilbert scheme,
   which  works fine for the rank decompositions, 
   but not so well when considering solutions to the border rank problem.
Instead, we propose to look at the analogue of $\mathit{VSP}$ inside the 
   multigraded Hilbert scheme, and more specifically, inside $\Slip_{r,X}$.
We define the \emph{border $\mathit{VSP}$}:
\[
  \bVSP (F, r):=\set{ I \in  \Slip_{r,X} \mid I \subset \apolar}.
\]
\begin{prop}\label{prop_basics_of_border_VSP}
   In the notation and assumptions above we have:
   \begin{itemize}
   \item $\bVSP (F, r) \subset \Slip_{r,X}$ is a Zariski closed subset, in particular,
     it has the structure of a projective (possibly reducible) variety.
     \item $\bVSP (F, r) \ne \emptyset \iff \borderrank{F} \leqslant r$,
     \item $\bVSP (F, r) \cap \Sip_{r,X} \ne \emptyset \Longrightarrow \rank{F} \leqslant r$.
   \end{itemize}
\end{prop}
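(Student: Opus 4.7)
The plan is to assemble all three claims as direct consequences of the earlier apolarity results: once the right reformulations are in place, each item is a one-liner. I would treat them in the order given, handling (1) with a Schubert-variety / regular-map argument and reducing (2) and (3) to the border and classical apolarity lemmas respectively.

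For item~(1), the key idea is that $\bVSP(F,r)$ is the preimage of a Schubert subvariety of the Grassmannian under a regular map. Proposition~\ref{prop_apolar_in_degree_deg_F_vs_apolar} reduces the condition $I\subset \apolar$ to the single degree condition $I_L\subset \apolar_L$. Since we may assume $F\ne 0$ (else the claim is trivial), $\apolar_L\subset S_L$ is the hyperplane perpendicular to $F$ under the pairing \eqref{equ_apolarity_hook_definition}, so $I_L\subset \apolar_L$ is equivalent to $F\in I_L^{\perp}$, i.e.\ to $[F]\in \rho(I)\subset \PPof{H^0(L)^*}$, where $\rho$ is the morphism $\Slip_{r,X}\to Gr$ from Lemma~\ref{lem_strong_nonsaturated_apolarity}. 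The subset of $Gr$ consisting of $(r'-1)$-planes through the fixed point $[F]$ is a Schubert subvariety (a sub-Grassmannian), and $\bVSP(F,r)=\rho^{-1}$ of this subvariety. Equivalently, $\bVSP(F,r)$ is the scheme-theoretic fibre over $[F]$ of the projection $\ccU\to \PPof{H^0(L)^*}$ from the second bullet of Lemma~\ref{lem_strong_nonsaturated_apolarity}. Either description shows that $\bVSP(F,r)$ is Zariski closed in the projective variety $\Slip_{r,X}$, and hence is itself projective.

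For item~(2), non-emptiness of $\bVSP(F,r)$ unpacks to the existence of some $I\in \Slip_{r,X}$ with $I\subset \apolar$, which by border apolarity (Theorem~\ref{thm_nonsaturated_apolarity}) is equivalent to $\borderrank{F}\leqslant r$. For item~(3), if $I\in \bVSP(F,r)\cap \Sip_{r,X}$, then by the definition of $\Sip_{r,X}$ the ideal $I$ is the saturated ideal of a reduced configuration $R\subset X$ of $r$ distinct points, and still $I\subset \apolar$; multigraded apolarity (Proposition~\ref{prop_multigraded_apolarity}) then gives $[F]\in \linspan{R}$, so $\rank F\leqslant r$.

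The only step with any content is the closedness in item~(1); everything else is a straight transcription of the two apolarity lemmas already proved. I do not foresee a substantive obstacle, beyond checking that the map $I\mapsto I_L^\perp$ really is regular (already done in Lemma~\ref{lem_strong_nonsaturated_apolarity}) and that the Schubert condition ``containing a fixed point'' is indeed Zariski closed, which is standard.
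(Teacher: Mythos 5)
Your proposal is correct and follows essentially the same route as the paper: closedness comes from Lemma~\ref{lem_strong_nonsaturated_apolarity} (you phrase it as $\rho^{-1}$ of the Schubert condition $[F]\in\rho(I)$, the paper as the image in $\Slip_{r,X}$ of the preimage of $[F]$ under $\ccU\to\PPof{H^0(L)^*}$ -- the same incidence picture), the second item is exactly Theorem~\ref{thm_nonsaturated_apolarity}, and the third is Proposition~\ref{prop_multigraded_apolarity}. The only cosmetic slip is calling $\bVSP(F,r)$ the fibre of $\ccU\to\PPof{H^0(L)^*}$ itself: that fibre lives in $\ccU\subset\Slip_{r,X}\times\PPof{H^0(L)^*}$ and must still be carried (isomorphically, since it sits in $\Slip_{r,X}\times\set{[F]}$) to $\Slip_{r,X}$, which is what the paper does via the projection $\ccU\to\Slip_{r,X}$.
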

\begin{proof}
   The closedness follows from 
   Lemma~\ref{lem_strong_nonsaturated_apolarity}, 
    because $\bVSP (F, r)$  is equal to the image 
    under the (projective) map $\ccU \to \Slip_{r,V}$
    of the preimage of the point  $[F]$ under the map 
    $\ccU \to \PP(H^0(L)^*)$.
   The border $\mathit{VSP}$ is non-empty if and only if $\borderrk(F) \leqslant r$ 
      by Theorem~\ref{thm_nonsaturated_apolarity}, 
      and the final item is clear. 
\end{proof}

The converse implication in the last item of 
Proposition~\ref{prop_basics_of_border_VSP} is false, as indicated 
in Example~\ref{ex_three_points_on_a_line} below. However, it is possible to reformulate the left hand side of this implication to make it into a necessary and sufficient condition.

In other words, we find the border $\mathit{VSP}$ a convenient expression 
  for the set of solutions to the approximate decomposition problem.
It would be hard to write all possible ways in which a 
   given polynomial $F\in \widetilde{S}_L$ can be approximated 
   using $r$ simple terms, as such a space would be infinite-dimensional.
Instead, the border $\mathit{VSP}$ expresses all possible limiting ideals 
   without bothering to write each ideal as a limit,
   and thus it gets rid of this infinite-dimensional 
   part of the problem.
Nevertheless the border $\mathit{VSP}$ has more information than, for instance, 
   just the limiting linear span 
   (which can be recovered from $\bVSP$ using the map $\rho$ from
     Lemma~\ref{lem_strong_nonsaturated_apolarity}).

\begin{example}\label{ex_three_points_on_a_line}
   Suppose $X=\PP^2$, $L=\ccO(d)$ for $d\geqslant 4$ and 
   $F=x_1^{(d)}+ x_2^{(d)} + (x_1 + x_2)^{(d)}$. 
   That is, $F$ depends only on two out of three variables, 
   $\rank{F} =\borderrank{F}=3$, and the above expression is the unique 
   (up to order) decomposition of $F$ into three simple summands.
   In this case, $\bVSP(F,3) = \set I $, 
      where $I \subset \CC[\alpha_0, \alpha_1, \alpha_2]$ is 
   $I=(\alpha_0^2, \alpha_0\alpha_1,\alpha_0\alpha_2, \alpha_1^2\alpha_2 - \alpha_1\alpha_2^2)$.
   Note that $I \notin \Sip_{3,X}$ because its saturation is equal to 
     $I^{\sat}=(\alpha_0, \alpha_1^2\alpha_2 - \alpha_1\alpha_2^2)$,
     which has the Hilbert function
     \[h(i) = 
\begin{cases}
                                                0 &\text{if }\   i<0,\\
                                                i+1 &  \text{if } -1 \leqslant i \leqslant 2, \\
                                                3  &\text{if }\  i \geqslant 2,                                     \end{cases}  \                                           
\]
   which we briefly write as $h = (1,2,3,3,\dotsc)$, whereas $h_{3,\PP^2} = (1,3,3,\dotsc)$.
\end{example}

There are a multitude of other examples where all elements of $\bVSP(F,r)$ are non-saturated.
The monomial $x_0^{(d)} x_1^{(2)} x_2$ (for $d\ge 3$) has border rank $6$
\cite[Thm~11.3]{landsberg_teitler_ranks_and_border_ranks_of_symm_tensors}, and $\bVSP\left(x_0^{(d)} x_1^{(2)} x_2,6\right)=\set{(\alpha_1^3, \alpha_0\alpha_2^2, \alpha_1\alpha_2^2,\alpha_2^3)}$. 
Similarly, if $\fromto{p_1}{p_6}\in \CC^3$ 
  are non-zero vectors whose projectivisations 
  are distinct and lie on a smooth conic curve, 
  and $F=p_1^{(d)}+ \dotsb + p_6^{(d)}$ 
  for $d \gg 0$,
  then also $\bVSP(F,6)$ consists of 
  a single non-saturated ideal.
Any wild polynomial or tensor $F$
  (see \cite[\S1]{nisiabu_jabu_smoothable_rank_example}, 
  and Subsection~\ref{sec_wild_cubic})
  also has only non-saturated ideals in  
  $\bVSP(F,\borderrank{F})$, by definition.

\subsection{Automorphism group action}
   
The action of the automorphism group $\Aut(X)$ on $X$ induces a natural action of $\Aut(X)$ on $\Slip_{r, X}$. 
Below we exploit an advantage of $\bVSP$ over the usual $\mathit{VSP}$: to define it we do not need to use closure and still we get a projective (in particular compact) variety. Thus the group action is largely determined by projective orbits, and in some cases fixed ideals 
are sufficient to study the solutions to border rank decompositions.

\begin{thm}[Fixed Ideal Theorem]\label{thm_G_invariant_bVSP}
   Suppose $F\in H^0(L)^*$ and $G \subset \Aut(X)$ is a subgroup that preserves $[F] \in \PPof{H^0(L^*)}$.
   Then $\bVSP(F,r)$ is a $G$-invariant subset of $\Slip_{r,V}$.
   In particular, 
      \begin{itemize}
       \item  $\br(F) \leqslant r$ 
      if and only if there exists a projective orbit of $G$
      contained in $\bVSP(F,r)$, 
      \item if $B\subset G$ is a connected solvable subgroup (for example, an algebraic torus, or a Borel subgroup), 
            then $\br(F) \leqslant r$ 
              if and only if there exists a $B$-fixed point 
              in $\bVSP(F,r)$.
      \end{itemize}
\end{thm}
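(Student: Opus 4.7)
The plan is to establish that $\bVSP(F,r)$ is a $G$-invariant closed subvariety of the projective variety $\Slip_{r,X}$, and then invoke two standard facts about algebraic group actions: every action on a non-empty projective variety admits a closed (hence projective) orbit, and Borel's fixed point theorem for connected solvable groups.

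First I would produce the $G$-action on $\Slip_{r,X}$ and verify that $\bVSP(F,r)$ is preserved. The $\Aut(X)$-action on $X$ induces a natural action on the Cox ring $S$ (and dually on $\widetilde{S}$), and hence on the multigraded Hilbert scheme $\Hilb_S^{h_{r,X}}$, permuting its irreducible components. Any $g \in \Aut(X)$ sends an $r$-tuple of distinct points of $X$ to another such tuple, so the subset $\Sip_{r,X}$ is preserved setwise. By the uniqueness characterisation of Proposition~\ref{prop_motivation_for_h_r_X}, the component $\Slip_{r,X}$ in which $\Sip_{r,X}$ is dense must be mapped to itself by every $g \in \Aut(X)$, hence by every $g \in G$. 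Since $G$ fixes $[F]$, for each $g\in G$ there is a scalar $\lambda_g$ with $g \cdot F = \lambda_g F$, and the equivariance of the apolarity pairing $\hook$ yields $(g \cdot \Theta) \hook F = \lambda_g^{-1}\, g \cdot (\Theta \hook F)$, so $\Theta \in \apolar \Leftrightarrow g \cdot \Theta \in \apolar$, that is, $g \cdot \apolar = \apolar$. Combining these, for $I \in \bVSP(F,r)$ we have $g \cdot I \in \Slip_{r,X}$ and $g \cdot I \subset g \cdot \apolar = \apolar$, so $g \cdot I \in \bVSP(F,r)$. Closedness of $\bVSP(F,r)$ in $\Slip_{r,X}$ is already part of Proposition~\ref{prop_basics_of_border_VSP}, so $\bVSP(F,r)$ is itself a $G$-invariant projective variety.

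The two refined characterisations now follow directly. By Proposition~\ref{prop_basics_of_border_VSP}, $\borderrank{F} \leqslant r$ is equivalent to $\bVSP(F,r) \ne \emptyset$. In that case, any $G$-orbit of minimal dimension in the $G$-invariant projective variety $\bVSP(F,r)$ is automatically closed, hence yields a projective $G$-orbit contained in $\bVSP(F,r)$; conversely, the existence of such an orbit makes $\bVSP(F,r)$ non-empty. For a connected solvable $B \subset G$, Borel's fixed point theorem applied to the $B$-action on the non-empty projective variety $\bVSP(F,r)$ produces a $B$-fixed ideal, and conversely a $B$-fixed ideal witnesses non-emptiness. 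The only genuinely non-routine step is verifying that $\Aut(X)$ preserves the irreducible component $\Slip_{r,X}$ rather than permuting components of $\Hilb_S^{h_{r,X}}$; this rests on the uniqueness clause in Proposition~\ref{prop_motivation_for_h_r_X}, after which the argument reduces to the apolarity manipulation above and to standard invocations of Borel's fixed point theorem and the existence of closed orbits.
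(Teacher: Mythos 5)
Your proof is correct and follows essentially the same route as the paper: establish $G$-invariance of $\bVSP(F,r)$ (via $G$-invariance of $\apolar$), then apply existence of closed orbits in non-empty projective $G$-varieties and the Borel fixed point theorem. You go a step further than the paper in explicitly justifying (via the uniqueness clause of Proposition~\ref{prop_motivation_for_h_r_X}) that $\Aut(X)$ preserves the component $\Slip_{r,X}$ rather than merely permuting components of the multigraded Hilbert scheme, a point the paper leaves implicit.
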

A weaker version of this theorem
  appears as the Normal Form Lemma in 
  \cite[Lem.~3.1]{landsberg_michalek_geometry_of_border_rank_decompositions}.
\begin{prf}
   The $G$-invariance follows from the definition of $\bVSP(F,r)$,
      since $\apolar$ is $G$-invariant. 
   Alternatively, one can use the fact that the maps 
      $\rho \colon \ccU \to   \PPof{H^0(L^*)}$ 
      and $\ccU \to \Slip_{r,X}$ from 
      Lemma~\ref{lem_strong_nonsaturated_apolarity} 
      are $\Aut(X)$-equivariant 
      (in particular $G$-equivariant, 
      and thus $\bVSP(F,r)$ is $G$-invariant). 
 
   The first item follows from Proposition~\ref{prop_basics_of_border_VSP}, 
      because in particular $\bVSP(F,r)$ is non-empty if and only if it admits 
      a closed (hence projective) orbit.
   The second item follows from the Borel Fixed Point Theorem (or the Lie-Kolchin Theorem) \cite[Thm~III.10.4, Cor.~III.10.5]{borel}.
\end{prf}

\begin{rmk}
    Note that one cannot hope for similar statements for rank 
    (or even cactus or smoothable ranks), 
    as the set of solutions to such decompositions is not necessarily compact in any sense.
    In contrast, 
      \cite{derksen_teitler_lower_bounds_for_ranks_of_invariant_forms}
      gives lower bounds for rank, smoothable rank 
      and cactus rank (but not border rank) 
      of invariant polynomials using different methods 
      that also use apolarity.
\end{rmk}

Due to significance of the Fixed Ideal Theorem 
(or \emph{fit}; Theorem~\ref{thm_G_invariant_bVSP}), 
   we decrypt it to get rid of most of the notation.

\begin{cor} \label{cor_FIT_decrypted}
    Suppose $X\subset \PPof{H^0(L)^*}$ is a smooth 
      toric projective variety embedded into the projective space via the complete linear system of a very ample line bundle $L$, and suppose $S$ is the Cox ring of $X$.
    Assume $[F]\in \PPof{H^0(L)^*}$ is $B$-invariant 
       for a connected solvable group $B\subset \Aut(X)$.
    Then the border rank of $F$ is at most $r$ if and only if there exists a homogeneous ideal $I \subset S$ such that 
    \begin{enumerate}
     \item \label{item_FIT_I_fixed}
            $I$ is $B$-invariant, in particular each graded piece $I_D$ is 
               a $B$-invariant linear subspace of $S_D = H^0(D)$,
     \item  \label{item_FIT_I_subset_F_perp}
            $I  \subset \apolar$,
     \item  \label{item_FIT_Hilbert_function}
            $\dim \, (S/I)_D = \min (r, \dim S_D)$ for all $D\in \Pic(X)$, and 
     \item  \label{item_FIT_I_is_limit_of_pts}
            $I$ is a flat limit of saturated ideals of $r$ distinct points in $X$ which are in very general position. 
    \end{enumerate}
\end{cor}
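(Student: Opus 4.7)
The plan is to show that this corollary is essentially just a repackaging of the Fixed Ideal Theorem (Theorem~\ref{thm_G_invariant_bVSP}) once one unwinds the definition of $\bVSP(F,r)$ and the definition of $\Slip_{r,X}$. So both implications will be handled by translating between the intrinsic language of the multigraded Hilbert scheme and the concrete list of conditions~\ref{item_FIT_I_fixed}--\ref{item_FIT_I_is_limit_of_pts}.

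First, for the direction ($\Leftarrow$), I would start with an ideal $I$ satisfying \ref{item_FIT_I_fixed}--\ref{item_FIT_I_is_limit_of_pts} and argue that $I \in \bVSP(F,r)$, so that $\borderrank{F} \leqslant r$ by Proposition~\ref{prop_basics_of_border_VSP}. Condition~\ref{item_FIT_Hilbert_function} says that the Hilbert function of $S/I$ equals $h_{r,X}$, so $I \in \Hilb^{h_{r,X}}_S$. Condition~\ref{item_FIT_I_is_limit_of_pts} then promotes this to membership in the specific component $\Slip_{r,X} = \overline{\Sip_{r,X}}$, because being a flat limit of points of $\Sip_{r,X}$ in the multigraded Hilbert scheme is exactly what defines the slip. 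Finally, condition~\ref{item_FIT_I_subset_F_perp} gives $I \subset \apolar$, and so $I \in \bVSP(F,r)$. Invariance (condition~\ref{item_FIT_I_fixed}) is not needed for this direction.

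For the direction ($\Rightarrow$), I would invoke Theorem~\ref{thm_G_invariant_bVSP} directly. The assumption $\borderrank{F} \leqslant r$ makes $\bVSP(F,r)$ non-empty (Proposition~\ref{prop_basics_of_border_VSP}), and since $B$ preserves $[F]$ and $\apolar$, the $\Aut(X)$-equivariance of the maps in Lemma~\ref{lem_strong_nonsaturated_apolarity} implies $\bVSP(F,r)$ is $B$-invariant. It is also a projective variety (Proposition~\ref{prop_basics_of_border_VSP}), so the Borel Fixed Point Theorem provides a $B$-fixed point $I \in \bVSP(F,r)$. This $I$ automatically satisfies conditions~\ref{item_FIT_I_fixed} and~\ref{item_FIT_I_subset_F_perp}; condition~\ref{item_FIT_Hilbert_function} holds because $I \in \Slip_{r,X} \subset \Hilb^{h_{r,X}}_S$; and condition~\ref{item_FIT_I_is_limit_of_pts} holds because $\Slip_{r,X}$ is the closure of $\Sip_{r,X}$, with the density of ideals of points in very general position guaranteed by Lemma~\ref{lem_hilb_function_of_very_general_tuple} together with Proposition~\ref{prop_motivation_for_h_r_X}.

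The only genuinely non-formal step is justifying that condition~\ref{item_FIT_I_is_limit_of_pts} is equivalent to $I \in \Slip_{r,X}$ (as opposed to merely $I \in \Hilb^{h_{r,X}}_S$); this is where the choice of the slip component matters, and where the phrase ``in very general position'' in the statement pulls its weight. I expect this to be the only place a reader might pause: one must note that the component $\Slip_{r,X}$ is defined precisely so that the ideals of very general $r$-tuples of points are dense in it, and conversely every lip is by definition such a flat limit. Once this identification is made explicit, the two directions above each collapse to a one-line verification.
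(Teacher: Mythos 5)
Your proof is correct and takes essentially the same route as the paper, which does not supply an explicit argument for Corollary~\ref{cor_FIT_decrypted} but presents it as a ``decryption'' of Theorem~\ref{thm_G_invariant_bVSP}: the content is exactly to unwind the definition of $\bVSP(F,r) \subset \Slip_{r,X}$ into conditions~\ref{item_FIT_I_subset_F_perp}--\ref{item_FIT_I_is_limit_of_pts}, apply the Borel Fixed Point Theorem to get condition~\ref{item_FIT_I_fixed} in the forward direction, and observe that condition~\ref{item_FIT_I_fixed} is superfluous in the reverse direction. You also correctly identify the one place where a pause is warranted — the equivalence between condition~\ref{item_FIT_I_is_limit_of_pts} and membership in $\Slip_{r,X}$ rather than merely in $\Hilb^{h_{r,X}}_S$ — and point to the right inputs (Lemma~\ref{lem_hilb_function_of_very_general_tuple} and Proposition~\ref{prop_motivation_for_h_r_X}) to resolve it.
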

   
Already conditions 
\ref{item_FIT_I_fixed}--\ref{item_FIT_Hilbert_function}
are often very restrictive: if the group of automorphisms of $F$ is large enough, then at least for small degrees $D$ there might be only finitely many subspaces fixed by $B$, and thus verifying these conditions
boils down to checking those finitely many cases. 
Condition~\ref{item_FIT_I_is_limit_of_pts} is more demanding to check. 
See Subsection~\ref{sec_smoothable_ideals}  for a brief discussion.

\section{Examples and applications}\label{sec_applications}

In this section we discuss three
  previously known examples and express them in terms of
  the border apolarity presented in this article.
Next we explain an application to characterise
   tensors of minimal border rank.

\subsection{Polynomials in the tangent space}

Suppose $X=\PP^n$ and $L=\ccO_{\PP^n}(d)$ for $d\geqslant 3$,
   so that $X\subset \PPof{\widetilde{S}_L}$ 
   is the $n$-dimensional $d$-th Veronese variety.
Let $F = x_0 ^{(d-1)}x_1$. 
Then it is well known that $\borderrank{F} =2$, while $\rank{F}=d>2$,
   and this is one of the first examples of this phenomenon discussed
   in textbooks. 
In our language, the lip (limit of ideals of points) $I\in \Slip_{r,X}$
   that arises from Theorem~\ref{thm_nonsaturated_apolarity},
   that is, $I\subset \apolar$ is
   \[
    I = (\alpha_1^2, \alpha_2,\dotsc, \alpha_n),
   \]
   and moreover it is the unique such ideal, 
   that is, $\bVSP(x_0 ^{(d-1)}x_1, 2) = \set{I}$.
   In particular, $I$ is saturated,
   and it also calculates the cactus and smoothable ranks 
     (see for instance 
      \cite{nisiabu_jabu_smoothable_rank_example} 
      or \cite{bernardi_brachat_mourrain_comparison}).
   
The reader will easily generalise this example 
   to border rank $2$ points over other toric varieties. 
However, the uniqueness of $I$ does not hold in general.
For instance, if $X=\PP^1$ and $L=\ccO_{\PP^1}(2)$, 
   or if $X=\PP^1\times \PP^1$ and $L=\ccO_{\PP^1\times \PP^1}(1, 1)$,
   then the lip calculating the border rank is not unique.

\subsection{A tensor of border rank \texorpdfstring{$3$}{3}}\label{sec_tensor_of_border_rank_3}

This example is based on case (iv) 
of \cite[Thm~1.2]{landsberg_jabu_third_secant}.
Suppose $X=\PP^2\times\PP^2\times \PP^2$ 
   and $L=\ccO_{\PP^2\times\PP^2\times \PP^2}(1,1,1)$.
   Using  the notation of Example~\ref{ex_Segre}, let 
   \newcommand{\shortplus}{\!+\!}\newcommand{\shortminus}{\!-\!}
\begin{alignat*}{1}
F &= x_0\otimes y_0 \otimes z_0 
  + x_1\otimes y_0 \otimes z_1 
  + x_1\otimes y_1 \otimes z_0
  + x_2\otimes y_0 \otimes z_2
  + x_2\otimes y_2 \otimes z_0\\
=&\! \lim_{t\to 0} 
    \tfrac{1}{t}\bigl(\!(t x_0 \shortminus x_1\shortminus x_2)\otimes y_0 \otimes z_0 +x_1\otimes(y_0\shortplus t y_1)\otimes(z_0\shortplus t z_1) 
                 + x_2\otimes(y_0\shortplus t y_2)\otimes(z_0\shortplus t z_2)\!\bigr).
\end{alignat*}
This tensor has border rank $3$. 
The expression of $F$ as a limit enables an explicit calculation 
  of the ideal $I \in \Slip_{3,X}$ such that $I\subset \apolar$:
\[ I =  \left(
     \begin{array}{ccc}
       \alpha_0^2,\  \alpha_0\alpha_1,  \ \alpha_0\alpha_2,\\
      \beta_1^2,\
      \beta_1\beta_2,\
      \beta_2^2,\\
      \gamma_1^2,\
      \gamma_1\gamma_2,\
      \gamma_2^2,\\
       \alpha_1\alpha_2(\alpha_1 - \alpha_2),\\
       \alpha_0\beta_0-\alpha_1\beta_1,\ 
       \alpha_0\beta_0-\alpha_2\beta_2,\ 
       \alpha_0\beta_1,\ 
       \alpha_0\beta_2,\ 
      \alpha_1\beta_2,\ 
      \alpha_2\beta_1,\\
      \alpha_0\gamma_0-\alpha_1\gamma_1,\ 
      \alpha_0\gamma_0-\alpha_2\gamma_2,\ 
      \alpha_0\gamma_1,\
      \alpha_0\gamma_2,\
      \alpha_1\gamma_2,\
      \alpha_2\gamma_1,\\
      \beta_0\gamma_1 - \beta_1\gamma_0,\
      \beta_0\gamma_2 - \beta_2\gamma_0,\      
      \beta_1\gamma_1,\
      \beta_1\gamma_2,\
      \beta_2\gamma_1,\
      \beta_2\gamma_2\\
   \end{array}\right).
   \]
Thus $I$ has $28$ minimal generators:
   three in each of the multidegrees 
   $(2,0,0)$, $(0,2,0)$ and $(0,0,2)$,
one in multidegree $(3,0,0)$, and
six in each of the multidegrees $(1,1,0)$, $(1,0,1)$, and $(0,1,1)$.
This ideal is \emph{not} saturated: 
   for example, $\alpha_0\notin I$, 
   but $\alpha_0\cdot (\alpha_0,\alpha_1,\alpha_2) \subset I$,
   hence $\alpha_0$ is in the saturation of $I$.
The zero set of $I$ in $\PP^2\times \PP^2\times \PP^2$ 
   is three distinct points 
   $\set{[x_1],[x_2],[x_1+x_2]}\times [y_0]\times[z_0]$.
   
In this case $\bVSP(F, 3) \ne \set{I}$. 
This can be recovered from the proof 
   of \cite[Thm~1.11]{landsberg_jabu_third_secant},
   where it is shown that the same $F$ can be obtained 
   by making three points converge to a single point, 
   and thus the limiting ideal could also be an ideal 
   that has only one point of support.
It would be an interesting follow-up project 
   to determine if there is any interesting
   geometry in the variety (or scheme) $\bVSP(F,3)$,
   analogously to other $\mathit{VSP}$'s studied for instance in
   \cite{gallet_ranestad_villamizar}.

\subsection{Wild cubic in five variables}\label{sec_wild_cubic}

Let $X=\PP^4$ and $L=\ccO_{\PP^4}(3)$.
In \cite{nisiabu_jabu_smoothable_rank_example} we studied
  the following cubic polynomial:
  \begin{align*}
     F &= x_0^{(2)} x_2 - (x_0+x_1)^{(2)} x_3 + x_1^{(2)} x_4\\
       &= \lim_{t\to 0} \frac{1}{t}\Bigl( (x_0 + t x_2)^{(3)} - 
                        (x_0 + x_1 +  t x_3)^{(3)} 
                        - \tfrac{1}{4} (2x_1 - t x_4)^{(3)} \\
                       &\phantom{=\lim \frac{1}{t} \Bigl(} - \tfrac{1}{3} ( x_0 -  x_1)^{(3)} 
                        + \tfrac{1}{3} ( x_0 + 2x_1)^{(3)}\Bigr).
  \end{align*}
The results of \cite{nisiabu_jabu_smoothable_rank_example} 
   show that this is an example of a ``wild'' polynomial in 
   the language of \cite[\S1]{nisiabu_jabu_smoothable_rank_example},
   and this implies that its border rank is more difficult to analyse.
In fact, the example discussed in 
   Subsection~\ref{sec_tensor_of_border_rank_3} is also ``wild'' in this sense (see \cite[\S2.3]{nisiabu_jabu_smoothable_rank_example}). 
In the context of the present article, 
   ``wildness'' of $F$ can be phrased as
   the fact that there is no saturated ideal in $\bVSP(F, \borderrank{F})$.
   
\begin{rmk}
   The cubic polynomial $F$ is concise 
      (strictly depends on all variables), 
      but has vanishing hessian
\cite[Example~7.1.5]{russo_geometry_of_special_varieties}.
   Thus it would be interesting to investigate further the relation between such special polynomials (or hypersurfaces) and wild examples of polynomials.
We thank Giorgio Ottaviani for this remark.
\end{rmk}

Here we present an explicit expression for 
   the lip $I\in \bVSP(F,5)$,
   arising from the presentation of $F$ as a limit:
   \begin{align*}
    I:=  \biggl(&
   \alpha_0\alpha_2 + \alpha_1\alpha_3 + \alpha_1\alpha_4, \
    \alpha_0\alpha_3 - \alpha_1\alpha_3, \
    \alpha_0\alpha_4, \
    \alpha_1\alpha_2, \
    \alpha_2^2, \
    \alpha_2\alpha_3, \
    \alpha_2\alpha_4, \
    \alpha_3^2, \
    \alpha_3\alpha_4, \
    \alpha_4^2,\\
   & \alpha_0^4\alpha_1 - \half \alpha_0^3\alpha_1^2 
        - \alpha_0^2\alpha_1^3 + \half\alpha_0\alpha_1^4
\biggr).
\end{align*}
This ideal is thus generated by ten quadrics coinciding with the ten quadrics in $\apolar$, and also a quintic.
Moreover $I$ is not saturated: 

\[
  I^{\sat} = \left(\alpha_2,\alpha_3, \alpha_4,\alpha_0^4\alpha_1 - \half \alpha_0^3\alpha_1^2 - \alpha_0^2\alpha_1^3 + \half\alpha_0\alpha_1^4 \right).
\]
The ideal $I$ is not a unique member of $\bVSP(F, 5)$
   and it would be interesting to understand the geometry 
   of $\bVSP(F, 5)$.\footnote{%
   Recently, \cite[Thm~6.6]{huang_michalek_ventura_wild_forms} has solved this problem, as a special case of a much more general discussion.}

\subsection{Tensors of minimal border rank}

We commence by recalling the definition of conciseness.
\begin{defin}\label{def_concise}
   Suppose $S = \CC[\fromto{\alpha_1}{\alpha_{n+w}}]$
     is the Cox ring of $X$ and 
     $F\in \widetilde{S}=\CC[\fromto{x_1}{x_{n+w}}]$ 
     is homogeneous.
   We say $F$ is \emph{concise} if 
   $\apolar_{\deg{\alpha_i}} = 0$ for all $i$.
\end{defin}

This definition coincides with the standard notion of conciseness 
   for tensors and polynomials:
\begin{itemize}
 \item if $F$ is concise, then $F$ strictly depends 
         on all variables $x_i$, even after any automorphism of $X$, and
 \item if 
       $X = \PP^{a_1}\times \dotsb \times \PP^{a_w}$,
       then the converse of the above also holds:
       if $F$ strictly depends on all the variables,
          after any automorphism of $X$, then $F$ is concise.
\end{itemize}

In particular, the following property is standard.
Suppose 
      $X = \PP^{a_1}\times  \dotsb \times \PP^{a_w}$
      and $F\in \widetilde{S}_{\ccO_X(d_1,\dotsc,d_w)}$ 
      is concise.
    Then 
      $\borderrank{F} \geqslant \max\setfromto{a_1+1}{a_w+1}$.
More generally:
\begin{prop}
    For any smooth toric projective variety, in the setting of Definition~\ref{def_concise}, 
    if $F$ is concise, then 
       $\borderrank{F} \geqslant \max\set{\dim H^0(\deg(\alpha_i)) \mid i \in \setfromto{1}{n+w}}$.
\end{prop}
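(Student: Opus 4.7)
The plan is to invoke border apolarity (Theorem~\ref{thm_nonsaturated_apolarity}) to obtain, for $r = \borderrank{F}$, a lip $I \in \Slip_{r,X}$ with $I \subset \apolar$. By the definition of the multigraded Hilbert scheme and the function $h_{r,X}$, the ideal $I$ has prescribed Hilbert function, namely $\dim (S/I)_D = \min(r, \dim H^0(D))$ for every $D \in \Pic(X)$. The conciseness hypothesis will then immediately produce the desired lower bound.

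First I would fix an index $i \in \{1, \ldots, n+w\}$ and set $D_i := \deg(\alpha_i)$. Since $\alpha_i \in S_{D_i}$ is a nonzero generator, $D_i$ is effective, hence $\dim H^0(D_i) \geqslant 1$. By conciseness (Definition~\ref{def_concise}), $\apolar_{D_i} = 0$, and since $I \subset \apolar$ we get $I_{D_i} = 0$. Therefore
\[
  \dim (S/I)_{D_i} = \dim S_{D_i} = \dim H^0(D_i).
\]
On the other hand, the Hilbert function condition on the lip forces
\[
  \dim (S/I)_{D_i} = \min\bigl(r, \dim H^0(D_i)\bigr).
\]
Comparing the two expressions yields $\dim H^0(D_i) \leqslant r$. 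Taking the maximum over $i$ finishes the proof.

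There is essentially no obstacle: the argument is a one-line application of border apolarity once one notices that conciseness kills exactly the graded pieces of $\apolar$ in which the variables live, and that the Hilbert function of a lip is pinned down in each of those degrees. The only point to double-check is that the degrees $D_i$ of the Cox ring generators are indeed effective so that $h_{r,X}(D_i) = \min(r, \dim H^0(D_i))$ rather than zero, but this is automatic because $\alpha_i$ itself is a nonzero section of degree $D_i$.
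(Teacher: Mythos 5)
Your proof is correct. Let me note how it relates to the paper's own treatment.

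The paper does not give a direct argument here: it simply cites the catalecticant bound for border rank from Ga{\l}{\k a}zka's thesis (\cite[Cor.~5.5]{galazka_mgr}). That bound says $\borderrank{F}\geqslant \dim (S/\apolar)_D$ for every $D\in\Pic(X)$, and it is classically proved by pure linear algebra: the catalecticant (flattening) map $S_D\to\widetilde{S}_{L-D}$, $\Theta\mapsto\Theta\hook F$, has rank $\dim(S/\apolar)_D$; on the $r$-th secant variety this rank is at most $r$ by lower semicontinuity and subadditivity of matrix rank. Conciseness gives $\apolar_{\deg\alpha_i}=0$, so the rank equals $\dim H^0(\deg\alpha_i)$, and the proposition follows. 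Your argument instead derives the same conclusion as a corollary of the paper's border apolarity theorem (Theorem~\ref{thm_nonsaturated_apolarity}): a lip $I\subset\apolar$ has Hilbert function pinned to $h_{r,X}$, conciseness forces $I_{\deg\alpha_i}=0$, and comparing the two expressions for $\dim(S/I)_{\deg\alpha_i}$ gives $\dim H^0(\deg\alpha_i)\leqslant r$. This is essentially the catalecticant bound rephrased in the language of the multigraded Hilbert scheme. What the classical route buys is that it is elementary and avoids the heavy $\Slip_{r,X}$ machinery; what your route buys is a short self-contained proof inside the framework of the paper and a clean illustration of how Theorem~\ref{thm_nonsaturated_apolarity} subsumes the catalecticant bound. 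Your closing remark about effectivity of $\deg(\alpha_i)$ is a good sanity check, and it is indeed automatic since each $\alpha_i$ is a nonzero section of the corresponding line bundle.
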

This statement is a special case of the catalecticant bound
\cite[Cor.~5.5]{galazka_mgr} for border rank.

In the setting of the proposition  
   we say that $F$ \emph{has minimal border rank} 
   if $F$ is concise and 
   the border rank is equal to the minimal value from the proposition:
   \[
     \borderrank{F} 
     = \max\set{\dim H^0(\deg(\alpha_i)) \mid i \in \setfromto{1}{n+w}}.
   \]
   
A consequence of 
   \cite[Thm~4.8]{nisiabu_jabu_kleppe_teitler_direct_sums} 
   is that for $X=\PP^n$, if $F\in \widetilde{S}_{\ccO(d)}$ 
   has minimal border rank, then the apolar ideal $\apolar \subset S$ 
   has at least $n=\borderrank{F}-1$ minimal generators in degree $d$.
Still in the case of $X=\PP^n$ this is equivalent to the following claim:
\[
  \dim \biggl( S_{d} \left/ 
         \left(\apolar_{d-1} \cdot S_{1} \right)\right.\biggr) \geqslant n+1= \borderrank{F}.
\]
Here we show a generalisation of these claims to toric varieties.
First we present a version for the product of projective spaces, 
   where the dimensions of the factors are equal.
\begin{thm}\label{thm_minimal_border_rank_product_of_Pa_s}
   Suppose $X= \left(\PP^a\right)^{\times w}$ for an integer $a$,
     and $F\in \widetilde{S}_L$ 
     is a partially symmetric tensor of minimal border rank. 
   Then the apolar ideal $\apolar \subset S$ has at least 
     $a=\borderrank{F}-1$ minimal generators in degree $L$.
\end{thm}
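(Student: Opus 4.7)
The plan is to invoke the Border Apolarity Theorem~\ref{thm_nonsaturated_apolarity} with $r = \borderrank F = a+1$ to produce a lip $I \in \Slip_{r,X}$ satisfying $I \subseteq \apolar$, and then to prove the inclusion
\[
J \subseteq I_L, \qquad \text{where} \qquad J := \sum_{0 < D < L} S_{L-D} \cdot \apolar_D \subseteq \apolar_L.
\]
Since the number of minimal generators of $\apolar$ in degree $L$ equals $\dim \apolar_L - \dim J$, and since $\dim \apolar_L = \dim S_L - 1$ while $\dim I_L = \dim S_L - (a+1)$ by the definition of the Hilbert function $h_{r,X}$, the inclusion $J \subseteq I_L$ will yield $\dim \apolar_L - \dim J \geq (\dim S_L - 1) - (\dim S_L - (a+1)) = a$, which is exactly the conclusion.

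The crucial intermediate step will be to establish $I_{L-e_k} = \apolar_{L-e_k}$ for every $k$ with $l_k \geq 1$, where $e_k \in \Pic(X)$ denotes the class coming from the $k$-th factor. Conciseness gives $\apolar_{e_k} = 0$, so the catalecticant $C_{e_k}: S_{e_k} \to \widetilde{S}_{L-e_k}$ is injective of rank $a+1$; in particular $\dim S_{L-e_k} \geq a+1$ automatically. By the symmetry of catalecticants — both $C_{e_k}$ and $C_{L-e_k}$ arise from the same bilinear pairing $S_{e_k} \otimes S_{L-e_k} \to \CC$, $(\Theta,\Psi) \mapsto (\Theta\Psi) \hook F$, and so have equal rank — I obtain $\dim \apolar_{L-e_k} = \dim S_{L-e_k} - (a+1)$. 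The Hilbert function $h_{r,X}$ forces the matching equality $\dim I_{L-e_k} = \dim S_{L-e_k} - (a+1)$, and combined with the inclusion $I_{L-e_k} \subseteq \apolar_{L-e_k}$ this gives $I_{L-e_k} = \apolar_{L-e_k}$.

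The propagation from these "near-top" degrees to all of $J$ is then formal. For any $0 < D < L$ I will choose an index $k$ with $D_k < l_k$ (which exists because $D \neq L$, and such a $k$ automatically has $l_k \geq 1$) and factor $S_{L-D} = S_{e_k} \cdot S_{L-D-e_k}$. The ideal property of $\apolar$ gives $S_{L-D-e_k} \cdot \apolar_D \subseteq \apolar_{L-e_k} = I_{L-e_k}$, and then the ideal property of $I$ gives $S_{e_k} \cdot I_{L-e_k} \subseteq I_L$; chaining these shows $S_{L-D} \cdot \apolar_D \subseteq I_L$, and summing over $D$ proves $J \subseteq I_L$. The main technical delicacy I anticipate is checking the catalecticant symmetry in the multigraded setting, but this reduces to the symmetry of the bilinear pairing induced by $F$ exactly as in the classical single-graded case. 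A pleasant feature of this approach is that it only requires $I$ and $\apolar$ to coincide at the specific degrees $L - e_k$; at intermediate multidegrees the two ideals may differ, and no saturation or smoothability property of $I$ is ever invoked beyond its being a lip.
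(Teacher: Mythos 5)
Your proof is correct and follows essentially the same route as the paper's: the paper isolates your key intermediate step as Lemma~\ref{lem_minimal_border_rank}, proving $I_{L-\deg(\alpha_i)} = \apolar_{L-\deg(\alpha_i)}$ (hence $\apolar_{L-\deg(\alpha_i)}\cdot S_{\deg(\alpha_i)}\subset I_L$) by the Gorenstein symmetry $\dim(S/\apolar)_D = \dim(S/\apolar)_{L-D}$, which is precisely the catalecticant symmetry you invoke. The only cosmetic difference is that you spell out the propagation $\sum_{0<D<L}S_{L-D}\cdot\apolar_D \subseteq \sum_k S_{e_k}\cdot\apolar_{L-e_k}$ that the paper uses implicitly when identifying the degree-$L$ part of the subideal generated below degree $L$ with the sum over variables.
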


This statement applies, for instance, to the case 
   $X=\PP^3\times \PP^3 \times \PP^3$ and its $4$-th secant variety.
The famous Salmon Problem posed by Allman 
   asked for the description of the ideal of this secant variety in $\PP^{63}$.
This was partially solved 
   by Friedland \cite{friedland_salmon_problem_paper},
   who gave its set-theoretic equations, 
   at the same time providing criteria for a tensor in 
   $\CC^4\otimes \CC^4\otimes \CC^4$ to have border rank $4$.
Theorem~\ref{thm_minimal_border_rank_product_of_Pa_s} 
   (with $a=4$, $w=3$) 
   provides some numerical necessary conditions for such tensors. 
It is an interesting problem to determine if (in higher dimensions)
   they are covered by previous research 
   and if they are sufficient for $a=4$ and $w=3$. 
   
More generally, for any toric variety we have 
  a slightly weaker version of this statement.

\begin{thm}\label{thm_minimal_border_rank_general_case}
   Suppose $X$ is a smooth toric projective variety
     and $F\in \widetilde{S}_L$ 
     is of minimal border rank.
   Let $i\in \setfromto{1}{n+w}$ be an integer 
     such that 
     $\borderrank{F}= \dim H^0(\deg(\alpha_i))$.
   Then  
   \[
       \dim \biggl( S_L \left/ 
         \left(\apolar_{L-\deg(\alpha_i)} \cdot S_{\deg{\alpha_i}} \right)\right.\biggr) \geqslant \borderrank{F}.
   \]
\end{thm}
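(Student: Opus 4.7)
The plan is to invoke border apolarity (Theorem~\ref{thm_nonsaturated_apolarity}) to produce an ideal $I \in \Slip_{r,X}$ with $I \subset \apolar$, and then use minimality of the border rank to force the equality $I_{L-d_i} = \apolar_{L-d_i}$. Throughout I write $r := \borderrank{F}$ and $d_i := \deg(\alpha_i)$. The hypothesis reads $r = \dim H^0(d_i) = \dim S_{d_i}$, and conciseness (built into the definition of minimal border rank) gives $\apolar_{d_i} = 0$.

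The first step is a dimension count in degree $L - d_i$. The catalecticant
\[
  C_F^{d_i} \colon S_{d_i} \to \widetilde{S}_{L-d_i}, \qquad \Theta \mapsto \Theta \hook F,
\]
has kernel $\apolar_{d_i} = 0$, so it is injective and its image has dimension $\dim S_{d_i} = r$. The map $C_F^{L-d_i} \colon S_{L-d_i} \to \widetilde{S}_{d_i}$ is its transpose with respect to the apolar pairing (both encode the bilinear form $(\Theta,\Psi) \mapsto (\Theta\Psi) \hook F$), hence has image of the same dimension $r$. Since $\dim \widetilde{S}_{d_i} = r$, this transpose is surjective; in particular $\dim S_{L-d_i} \geqslant r$, and
\[
  \dim \apolar_{L-d_i} = \dim S_{L-d_i} - r = \dim S_{L-d_i} - h_{r,X}(L-d_i) = \dim I_{L-d_i}.
\]
Combined with $I_{L-d_i} \subset \apolar_{L-d_i}$ (from $I \subset \apolar$), this yields $I_{L-d_i} = \apolar_{L-d_i}$.

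Since $I$ is an ideal, multiplying by $S_{d_i}$ gives $\apolar_{L-d_i} \cdot S_{d_i} = I_{L-d_i} \cdot S_{d_i} \subset I_L$. Non-degeneracy of $X \subset \PPof{H^0(L)^*}$ (via the complete linear system of $L$) forces $r \leqslant \dim S_L$, for otherwise a sufficiently general $r$-tuple of points of $X$ would already span all of $\PPof{H^0(L)^*}$, contradicting $\borderrank{F} = r$. Hence $h_{r,X}(L) = r$, so $\dim I_L = \dim S_L - r$, and
\[
  \dim \bigl( S_L / (\apolar_{L-d_i} \cdot S_{d_i}) \bigr) \geqslant \dim ( S_L / I_L ) = r,
\]
which is the desired inequality.

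The subtle point is the surjectivity of $C_F^{L-d_i}$: conciseness alone forces only injectivity of $C_F^{d_i}$, but the minimality hypothesis $r = \dim \widetilde{S}_{d_i}$ is exactly what promotes this injectivity to surjectivity of the transpose via the catalecticant-symmetry argument above. Everything else is bookkeeping with Hilbert functions and the defining inclusion $I_{L-d_i} \cdot S_{d_i} \subset I_L$.
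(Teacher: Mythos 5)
Your proof is correct and follows essentially the same route as the paper: fix $I\in\Slip_{r,X}$ with $I\subset\apolar$, show that the Hilbert function of $S/\apolar$ in degree $L-\deg(\alpha_i)$ equals $r$, deduce $I_{L-\deg(\alpha_i)}=\apolar_{L-\deg(\alpha_i)}$, multiply by $S_{\deg(\alpha_i)}$ to get $\apolar_{L-\deg(\alpha_i)}\cdot S_{\deg(\alpha_i)}\subset I_L$, and conclude by $\codim I_L = r$. The paper wraps the middle steps into Lemma~\ref{lem_minimal_border_rank} and cites the symmetry $\dim(S/\apolar)_{\deg(\alpha_i)}=\dim(S/\apolar)_{L-\deg(\alpha_i)}$ from Ga{\l}{\k a}zka, whereas you reprove that symmetry on the spot via the transpose catalecticant $C_F^{L-\deg(\alpha_i)}=(C_F^{\deg(\alpha_i)})^{\mathsf T}$ --- a small but pleasant gain in self-containedness; otherwise the arguments coincide.
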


The proofs of both theorems follow the idea that we phrase as the following lemma.

\begin{lemma}\label{lem_minimal_border_rank}
   Suppose $F$ is concise and 
      of minimal border rank,
      let $I \in \bVSP(F,\borderrank{F})$ and let $i$ be such that
      $\borderrank{F}= \dim H^0(\deg(\alpha_i))$.
   Then 
      \[
         \apolar_{L-\deg(\alpha_i)} \cdot S_{\deg{\alpha_i}} 
          \subset I_L.
      \]
\end{lemma}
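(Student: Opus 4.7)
\textbf{Proof plan for Lemma~\ref{lem_minimal_border_rank}.}
The idea is to show the stronger fact that $I$ actually \emph{agrees} with $\apolar$ in the degree $L - \deg(\alpha_i)$; the inclusion in the lemma will then follow from the ideal property of $I$. Throughout, set $D_i := \deg(\alpha_i)$ and $r := \borderrank{F} = \dim S_{D_i}$, and use the dual description $\apolar_E^\perp = S_{L-E} \hook F \subset \widetilde{S}_E$ from Remark~\ref{rem_interactions_between_all_sorts_of_things}.

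First, I compute $\dim \apolar_{L-D_i}$. Since $F$ is concise, $\apolar_{D_i} = 0$, so the apolarity pairing $S_{D_i} \otimes F \to \widetilde{S}_{L-D_i}$, $\Theta \mapsto \Theta \hook F$, is injective. This yields two useful consequences: (a) $\dim S_{L-D_i} \geqslant \dim S_{D_i} = r$, so that $h_{r,X}(L-D_i) = r$, and (b) $\dim (S_{D_i} \hook F) = r$, which through the perpendicular description gives
\[
  \dim \apolar_{L-D_i} \;=\; \dim S_{L-D_i} - r.
\]

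Next I compute $\dim I_{L-D_i}$. Since $I \in \bVSP(F,r) \subset \Slip_{r,X}$, the ideal $I$ lies in the multigraded Hilbert scheme $\Hilb^{h_{r,X}}_S$, so $\dim(S/I)_{L-D_i} = h_{r,X}(L-D_i) = r$ by (a). Therefore
\[
  \dim I_{L-D_i} \;=\; \dim S_{L-D_i} - r \;=\; \dim \apolar_{L-D_i}.
\]
Because $I \subset \apolar$ forces $I_{L-D_i} \subset \apolar_{L-D_i}$, the equality of dimensions upgrades this to $I_{L-D_i} = \apolar_{L-D_i}$.

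Finally, since $I$ is a homogeneous ideal of $S$, it is closed under multiplication by $S_{D_i}$. Hence
\[
  \apolar_{L-D_i} \cdot S_{D_i} \;=\; I_{L-D_i} \cdot S_{D_i} \;\subset\; I_L,
\]
which is the claim. The main (though modest) obstacle is to keep track of the two dual pictures: one uses the lip condition on $I$ (control of its Hilbert function in degree $L-D_i$) while the other uses conciseness of $F$ (control of the apolar ideal in degree $D_i$, and hence via duality in degree $L-D_i$); the match of these two codimensions is exactly where minimal border rank $r = \dim S_{D_i}$ enters decisively.
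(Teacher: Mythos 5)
Your proof is correct and follows essentially the same route as the paper: both arguments show that $I_{L-\deg(\alpha_i)}$ and $\apolar_{L-\deg(\alpha_i)}$ have the same codimension $\borderrank{F}$ in $S_{L-\deg(\alpha_i)}$, deduce equality from the inclusion $I\subset\apolar$, and then multiply by $S_{\deg(\alpha_i)}$ using the ideal property. The only cosmetic difference is that the paper cites the symmetry of the Hilbert function of $S/\apolar$ \cite[Prop.~4.5]{galazka_mgr}, whereas you verify the needed instance of that symmetry directly from conciseness and the duality $\apolar_D^{\perp}=S_{L-D}\hook F$ of Remark~\ref{rem_interactions_between_all_sorts_of_things}.
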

\begin{prf}
   By the symmetry of the Hilbert function \cite[Prop.~4.5]{galazka_mgr}
      we have
      \begin{align*}
         \borderrank{F} =\dim S_{\deg(\alpha_i)} 
         &= \dim \left(S/\apolar\right)_{\deg(\alpha_i)}\\
         &= \dim \left(S/\apolar\right)_{L-\deg(\alpha_i)}
         = \dim \left(S/I\right)_{L-\deg(\alpha_i)}.
      \end{align*}
  Since $I_{L-\deg(\alpha_i)} \subset  \apolar_{L-\deg(\alpha_i)}$  
     by the definition of $\bVSP(F,\borderrank{F})$, 
     and their dimensions agree by the above calculation, 
     we must have $\apolar_{L-\deg(\alpha_i)}= I_{L-\deg(\alpha_i)}$.
  Thus
     $\apolar_{L-\deg(\alpha_i)} \cdot S_{\deg{\alpha_i}} \subset I_L$
     as claimed.
\end{prf}

\begin{prf}[ of Theorem~\ref{thm_minimal_border_rank_product_of_Pa_s}]
    Let $I\in \bVSP(F, a+1)$.
    Since every $i \in \setfromto{1}{(a+1)w}$ is such that
    $\borderrank{F}= \dim H^0(\deg(\alpha_i))$, by 
    Lemma~\ref{lem_minimal_border_rank} 
    we have
    \[
       \sum_{i} \apolar_{L-\deg(\alpha_i)} \cdot S_{\deg{\alpha_i}} 
          \subset I_L \subset \apolar_L\subset S_L.
    \]
    Therefore, the codimension of $\sum_{i} \apolar_{L-\deg(\alpha_i)} \cdot S_{\deg{\alpha_i}}$ in $S_L$ is at least
    the codimension of $I_L$ in $S_L$, that is, $\borderrank{F}$.
    On the other hand, the codimension of $\apolar_L$ in $S_L$
      is $1$.
    Therefore the codimension of 
    $\sum_{i} \apolar_{L-\deg(\alpha_i)} \cdot S_{\deg{\alpha_i}} \subset \apolar_L$ is at least $\borderrank{F}-1$,
      that is, we must have at least $\borderrank{F}-1$ minimal generators 
      of $\apolar$ in degree $L$. 
\end{prf}

\begin{prf}[ of Theorem~\ref{thm_minimal_border_rank_general_case}]
    By Lemma~\ref{lem_minimal_border_rank} 
      the codimension of $\apolar_{L-\deg(\alpha_i)} \cdot S_{\deg{\alpha_i}}$ in $S_L$ is at least $\codim I_L = \borderrank{F}$,
      and the claim follows. 
\end{prf}

\section{Concerning monomials}\label{sec_monomials}

Throughout this section we 
suppose 
$F= x_1^{(a_1)}x_2^{(a_2)} \dotsm x_{n+w}^{(a_{n+w})}
   \in \widetilde{S}_L$ 
       is a monomial.
The main interest is in the case of $X=\PP^n$, 
and our main goal in this section is to prove 
Theorem~\ref{thm_monomials_intro} which includes the calculation of the border rank of monomials in three variables, that is, for $X =\PP^2$. Along the way we prove some other statements of interest and show examples of applications of the border apolarity. 

\subsection{The upper bound} 

For monomials, the main case under consideration is 
  $X =\PP^n$.
Both the rank $\rank[\PP^n]{F}$ and the variety of sums of powers $\mathit{VSP}(F,\rank[\PP^n]{F})$ 
       are calculated in
\cite{carlini_catalisano_geramita_Waring_rank_of_monomials}
       and in
\cite{nisiabu_jabu_teitler_Waring_decompositions_of_monomials}.
Moreover, the border rank 
    $\borderrank[\PP^n]{F}$ is discussed in 
    \cite{oeding_border_rank_monomials},
     but at the time of submission of this article there are gaps in the argument.
For some other low dimensional toric varieties $X$, 
   the rank and border rank (and also cactus rank) of some monomials are calculated and estimated 
   in \cite{galazka_mgr}.
   
It is generally expected that in case $X=\PP^n$ 
   the border rank is equal to the well known upper bound \cite[Thm~11.2]{landsberg_teitler_ranks_and_border_ranks_of_symm_tensors}:
   \begin{equation}\label{equ_upper_bound_monos}
      \borderrank[\PP^n]{x_0^{(a_0)}x_1^{(a_1)} \dotsm x_{n}^{(a_{n})}}\leqslant (a_1+1)(a_2+1)\dotsm (a_n+1) 
      \text{ where } a_0\geqslant a_1\ge\dotsb\geqslant a_n.
   \end{equation}
If $n=1$, then equality in \eqref{equ_upper_bound_monos}
is standard and well known: for instance, see  \cite[Thm~11.2]{landsberg_teitler_ranks_and_border_ranks_of_symm_tensors} again.

   The upper bound
     generalises 
     to any smooth toric projective variety. 
   For $X= \PP^{a}\times \PP^{b}\times \PP^{c} \times \dotsm$
     this can be seen directly 
     from the submultiplicativity of rank and border rank: 
     if $X$ and $Y$ are toric varieties with Cox rings 
     $S[X]$ and $S[Y]$, 
     and $F\in \widetilde{S}[X]$, $G\in \widetilde{S}[Y]$ are homogeneous, then $S[X\times Y] = S[X] \otimes S[Y]$ and
     $\borderrank[X\times Y]{F \cdot  G} \leqslant \borderrank[X]{F}\borderrank[Y]{G}$.
   In general, the proof is a straightforward generalisation 
     of the argument for $\PP^n$, 
     as in~\cite{ranestad_schreyer_on_the_rank_of_a_symmetric_form}.
   \begin{lemma}\label{lem_monomial_upper_bound_on_general_toric_variety}
      Let $F=x_1^{(a_1)} \dotsm x_{n+w}^{(a_{n+w})}$ be a monomial 
        in the Cox ring $S$.
      Suppose $\AAA^n \simeq U\subset X$ is an open affine torus-invariant subset given as the complement of 
      $\zeroscheme(\alpha_{i_1}\dotsm\alpha_{i_w})$
      for some choice of pairwise different indices 
      $\fromto{i_1}{i_w}$. 
      Denote 
      \[
        J:= \setfromto{1}{n+w} \setminus\setfromto{i_1}{i_w},
      \]
        the complement of those indices.
      Then 
      \[
           \borderrank{F} \leqslant 
       \prod_{j\in J}(a_{j}+1).
      \]
   \end{lemma}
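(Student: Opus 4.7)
The plan is to reduce the bound to the classical upper bound for monomials on $\PP^n$ by approximating $F$ inside the torus-invariant affine chart $U \cong \AAA^n$ of $X$.

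First, I would set up explicit affine coordinates on $U$. Since $U \subset X$ is a smooth torus-invariant affine of dimension $n$ with $\alpha_{i_1}, \dots, \alpha_{i_w}$ invertible on $U$, the Picard degrees $\deg(\alpha_{i_1}), \dots, \deg(\alpha_{i_w})$ span $\Pic(X) \otimes \QQ$ in a way that lets us write, for each $j \in J$, a unique monomial $m_j$ in the $\alpha_{i_k}$ with $\deg(m_j) = \deg(\alpha_j)$. Then $u_j := \alpha_j / m_j$ is a regular function of trivial degree on $U$, and the $u_j$, $j \in J$, realize $U \cong \Spec \CC[u_j : j \in J] \cong \AAA^n$. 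Fix also a monomial $m_L$ in the $\alpha_{i_k}$ of degree $L$ so that multiplication by $m_L^{-1}$ trivializes $L|_U$.

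Second, I would dehomogenize $F$. Under the trivialization of $L|_U$ by $m_L$, the sections of $L$ restrict to a finite-dimensional subspace $W \subset \CC[u_j : j \in J]$ of polynomials of bounded degree. Pairing against $\widetilde{S}_L$ via \eqref{equ_apolarity_hook_definition}, the monomial $F = x_1^{(a_1)} \dotsm x_{n+w}^{(a_{n+w})}$ corresponds, up to a universal scalar, to the affine monomial $\prod_{j \in J} u_j^{a_j}$: the exponents of the $\alpha_{i_k}$ only contribute multiplicatively (they are units on $U$), while the exponents in $J$ get promoted to affine powers.

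Third, I would apply the classical Landsberg--Teitler upper bound \cite[Thm~11.2]{landsberg_teitler_ranks_and_border_ranks_of_symm_tensors} (or the Ranestad--Schreyer construction \cite{ranestad_schreyer_on_the_rank_of_a_symmetric_form}): after homogenizing the monomial $\prod_{j\in J} u_j^{a_j}$ by any auxiliary variable, there exist $r = \prod_{j \in J}(a_j+1)$ points $\bar p_b(t) \in \AAA^n$ and scalars $c_b(t)$, indexed by multi-indices $b=(b_j)_{j\in J}$ with $0\leqslant b_j\leqslant a_j$, such that
\[
   \prod_{j\in J} u_j^{a_j} = \lim_{t\to 0} \sum_b c_b(t)\cdot \mathrm{ev}_{\bar p_b(t)}
\]
as functionals on $W$. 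Interpreting each $\bar p_b(t)$ as a point of $U \subset X$ and transporting the identity back to $\widetilde{S}_L$ via the trivialization from Step~2, I would conclude that $F$ is a limit of $r$-term sums of lifts of points of $X$, so $\borderrank F \leqslant r$.

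The main obstacle is Step~2: making the identification of $F$ with the affine monomial $\prod_{j\in J} u_j^{a_j}$ precise, so that the limit computed in the affine patch really does match a limit in $\widetilde{S}_L$ under the embedding $X \subset \PPof{H^0(L)^*}$. Once the divided-power pairing, the choice of $m_L$, and the evaluation maps are reconciled, Steps 3--4 are essentially a change-of-variables built on the projective space result.
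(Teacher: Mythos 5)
Your proposal is correct in spirit but takes a genuinely different and more indirect route than the paper. The paper argues intrinsically on $X$: it observes that the monomial ideal $(\alpha_j^{a_j+1} : j \in J)$ is saturated, contained in $\apolar$, and cuts out a finite scheme $R \subset U \subset X$ of length $\prod_{j\in J}(a_j+1)$ supported at the torus-fixed point of $U$. Since $R$ is defined by a monomial ideal in an affine space, it is smoothable by \cite[Prop.~4.15]{cartwright_erman_velasco_viray_Hilb8}, so the smoothable rank of $F$ — hence the border rank — is bounded by $\len R$. This is a three-line argument once the ``smoothable rank $\geqslant$ border rank'' inequality is in hand. Your argument instead transports $F$ to the $\PP^n$ case by dehomogenizing in the chart $U\cong\AAA^n$, homogenizing with an auxiliary variable, invoking the Landsberg--Teitler bound, and projecting the resulting limit of point-evaluation functionals back through the inclusion $H^0(L) \hookrightarrow S^D V$. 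The two arguments end up resting on the same apolar monomial scheme (the Landsberg--Teitler bound is itself proved this way), so the paper's version just skips the detour through $\PP^n$.

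Two details in your route deserve attention beyond the reconciliation you flagged. First, for the Landsberg--Teitler exponent to come out as $\prod_{j\in J}(a_j+1)$ rather than some permutation, the auxiliary exponent must dominate: you need the homogenization degree $D$ chosen large enough that $D - \sum_{j\in J} a_j \geqslant \max_{j\in J} a_j$, which is a free choice but should be stated. Second, one must verify that the approximating points can be taken in the affine chart $\{u_0\neq 0\}$ (so that their images under the linear projection $\psi^*\colon S^D V^* \twoheadrightarrow H^0(L)^*$ actually lie on $\hat X$); this is true — either because the Ranestad--Schreyer-type smoothing keeps the points near the origin, or by a density argument — but it is exactly the kind of point the intrinsic proof avoids having to make. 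If you want to keep your reduction, you may as well run the intrinsic argument directly: the scheme $R$ you would construct on $\PP^n$ to prove the Landsberg--Teitler bound is carried by your chart isomorphism to the scheme the paper writes down on $X$.
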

   \begin{prf}
      Consider the scheme 
      $R = \zeroscheme\bigl(\alpha_j^{a_{j} +1} \mid  j \in J\bigr)$.
      Note that its support is the torus-fixed point of $U$,
        and its length is $\prod_{j\in J}(a_{j}+1)$.
      Moreover, the ideal used to define $R$ is saturated 
        and thus $\idealof(R) \subset \apolar$.
      Also $R$ is smoothable by 
        \cite[Prop.~4.15]{cartwright_erman_velasco_viray_Hilb8}.
      Therefore $R$ shows that the smoothable rank of $F$ 
        is at most $\prod_{j\in J}(a_{j}+1)$, 
        and the smoothable rank is an upper bound for the border rank. 
      See for instance 
        \cite[Sect.~1 and~\S2.1]{nisiabu_jabu_smoothable_rank_example}
        for the definition and basic properties of smoothable rank, 
        including the above inequality comparing it with border rank. 
      The cases of $X=\PP^n$ are also discussed 
        in~\cite{ranestad_schreyer_on_the_rank_of_a_symmetric_form} 
        and~\cite{bernardi_brachat_mourrain_comparison}.
   \end{prf}

\subsection{Move-fit}
   
The following is an immediate consequence of the Fixed Ideal Theorem, Theorem~\ref{thm_G_invariant_bVSP}.
Compare also with Corollary~\ref{cor_FIT_decrypted}.
\begin{cor}[Monomial Version of Fixed Ideal Theorem, or move-fit]\label{cor_monomials}
For any smooth toric projective variety $X$,
   the border rank of a monomial 
     $F=x_1^{(a_1)} \dotsm x_{n+w}^{(a_{n+w})}$
     is at most $r$ 
   if and only if there exists a homogeneous ideal 
   $I \subset S=\CC[\fromto{\alpha_1}{\alpha_{n+w}}]$
   such that
   \begin{enumerate}
     \item \label{item_brk_of_monomials_I_is_monomial}
            $I$ is a monomial ideal,
     \item  \label{item_brk_of_monomials_I_subset_F_perp}
            $I  \subset \linspan{\fromto{\alpha_1^{a_1+1}}{\alpha_{n+w}^{a_{n+w}+1}}}$, 
     \item  \label{item_brk_of_monomials_Hilbert_function}
            $\dim (S/I)_D = \min (\dim S_D, r)$, and
     \item  \label{item_brk_of_monomials_I_is_limit_of_pts}
            $I$ is a flat limit of saturated ideals of $r$ distinct points in $X$ which are in a very general position. 
   \end{enumerate}
\end{cor}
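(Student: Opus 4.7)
The plan is to derive this corollary directly from the Fixed Ideal Theorem, Theorem~\ref{thm_G_invariant_bVSP}, in its decrypted form Corollary~\ref{cor_FIT_decrypted}, by taking the connected solvable subgroup $B$ to be the maximal torus $T_X\subset\Aut(X)$ of the smooth toric variety $X$. Since $F=x_1^{(a_1)}\dotsm x_{n+w}^{(a_{n+w})}$ is a monomial in the divided-power coordinates $x_i$, it is a weight vector for the diagonal $T_X$-action on $\widetilde{S}$, and so $[F]\in\PPof{H^0(L)^*}$ is $T_X$-fixed and the hypotheses of the Fixed Ideal Theorem are met. The theorem together with Corollary~\ref{cor_FIT_decrypted} then delivers items~\ref{item_brk_of_monomials_I_subset_F_perp}, \ref{item_brk_of_monomials_Hilbert_function}, \ref{item_brk_of_monomials_I_is_limit_of_pts} at once (from items~\ref{item_FIT_I_subset_F_perp}, \ref{item_FIT_Hilbert_function}, \ref{item_FIT_I_is_limit_of_pts} respectively); only item~\ref{item_brk_of_monomials_I_is_monomial} requires an extra argument.

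The key translation I need to carry out is that a $\Pic(X)$-homogeneous ideal $I\subset S$ is $T_X$-invariant if and only if it is a monomial ideal. This follows from the short exact sequence
\[
 1\longrightarrow \Hom(\Pic(X),\CC^*) \longrightarrow (\CC^*)^{n+w} \longrightarrow T_X \longrightarrow 1
\]
underlying the Cox ring construction. Homogeneity with respect to $\Pic(X)$ is precisely invariance under $\Hom(\Pic(X),\CC^*)$, so combined with $T_X$-invariance it amounts to invariance under the full diagonal torus $(\CC^*)^{n+w}$ acting on $\Spec S = \AAA^{n+w}$. The $n+w$ weights of this action on the variables $\alpha_i$ are the distinct standard basis vectors of $\ZZ^{n+w}$, and so every ideal invariant under it is generated by monomials in $\alpha_1,\dotsc,\alpha_{n+w}$, which yields item~\ref{item_brk_of_monomials_I_is_monomial}.

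A one-line calculation from the apolarity action~\eqref{equ_apolarity_hook_definition} produces $\apolar = (\alpha_1^{a_1+1},\dotsc,\alpha_{n+w}^{a_{n+w}+1})$ for the monomial $F$, so the condition $I\subset\apolar$ coming from item~\ref{item_FIT_I_subset_F_perp} of Corollary~\ref{cor_FIT_decrypted} reads exactly as item~\ref{item_brk_of_monomials_I_subset_F_perp}. I do not expect a genuine obstacle: the substantive work is already packaged in the Fixed Ideal Theorem, and the only step that needs care is the Cox ring short exact sequence identification of $T_X$-invariant, $\Pic(X)$-homogeneous ideals with monomial ideals.
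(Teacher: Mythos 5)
Your proposal is correct and follows exactly the route the paper intends: the paper states without detail that the corollary is ``an immediate consequence'' of the Fixed Ideal Theorem and its decrypted form (Corollary~\ref{cor_FIT_decrypted}), and you fill in that gap by taking $B=T_X$, noting $[F]$ is a $T_X$-weight vector, and using the Cox exact sequence $1\to\Hom(\Pic(X),\CC^*)\to(\CC^*)^{n+w}\to T_X\to 1$ to identify $\Pic(X)$-homogeneous $T_X$-invariant ideals with monomial ideals. The computation $\apolar=(\alpha_1^{a_1+1},\dotsc,\alpha_{n+w}^{a_{n+w}+1})$ is also the right reading of the containment condition, so the argument is complete.
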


We expect the last 
   item~\ref{item_brk_of_monomials_I_is_limit_of_pts}
   is redundant and it is implied by \ref{item_brk_of_monomials_I_is_monomial} and \ref{item_brk_of_monomials_Hilbert_function}, but we have little evidence for that, 
  except the analogy to smoothability of monomial ideals
  \cite[Prop.~4.15]{cartwright_erman_velasco_viray_Hilb8}.

Most of the examples below seem to be new cases of calculation 
  of border rank of monomials.
The only exception is Example~\ref{ex_monomials_monotonic_border_rank},
  which we include here as an illustration, despite existence 
  of a more elementary proof, which is certainly known to experts.
Example~\ref{ex_mono_222} is also covered by a more general 
   Example~\ref{ex_monomials_P2}.

\begin{example}\label{ex_mono_222}
   Suppose $X=\PP^2$ and $F=(x_0 x_1 x_2)^{(2)}$.
   Then $\borderrank{F} \leqslant 9$ by 
      \eqref{equ_upper_bound_monos}.
   If $\borderrank{F} \leqslant 8$, then by \emph{move-fit} 
      there exists a monomial ideal $I\subset (\alpha_0^3,\alpha_1^3, \alpha_2^3)$ with $\dim I_3 =2$, 
      $\dim I_4=7$ and $\dim I_5= 13$.
   The first condition means that $I_3$ (up to reordering the variables) contains $\alpha_0^3$ and $\alpha_1^3$. 
   The condition on $I_4$ means that we need one more monomial in $I_4$ other than those generated by $I_3$, say $\alpha_2^3\cdot\alpha_i \in I_4$.
   But then $\dim I_5\geqslant 15$, a contradiction, thus $\borderrank{F} = 9$.
\end{example}

\begin{example}\label{ex_mono_2211}
   Suppose $X=\PP^3$ and $F=x_0^{(2)} x_1^{(2)} x_2 x_3$.
   We claim $\borderrank{F} = 12$, which is equal to the 
      upper bound of \eqref{equ_upper_bound_monos}.
   Suppose for contradiction that $\borderrank{F} \leqslant  11$,
      and let 
      $I\subset \apolar = (\alpha_0^3,\alpha_1^3, \alpha_2^2, \alpha_3^2)$
      be the monomial ideal obtained 
      by \emph{move-fit}.
   Note that $\dim \apolar_3 =10$ and $\dim I_3=9$,
      thus all but one monomial from $\apolar_3$ 
      are in $I_3$.
   We have $\dim I_4 =24$ and $\dim I_5= 45$.
   \begin{itemize}
    \item  If $\alpha_3^2\alpha_i\notin I$ 
            for some  $i\ne 3$, 
            then $I$ contains powers of all variables, 
            hence its Hilbert polynomial is $0$, a contradiction.
    \item  If $\alpha_3^3 \notin I$,
             then 
             \[
               I_3 = (\alpha_0^3, 
             \alpha_1^3, \alpha_2^2, 
             \alpha_3^2\alpha_0,
             \alpha_3^2\alpha_1,
             \alpha_3^2\alpha_2)_3
             \]
             and $\dim I_4 \geqslant 26$, a contradiction.
      \item If $\alpha_0^3\notin I$, that is, 
            \[
               I_3 = (\alpha_1^3, \alpha_2^2, 
             \alpha_3^2)_3,
            \]
            then $\dim (I_3 \cdot S_1) =23$, 
            and in degree $4$ we need one more generator of $I$ of the form 
            $\alpha_0^3\alpha_i$.
            Independent of $i$, we have $\dim I_4\cdot S_1 \geqslant 47$, a contradiction. 
   \end{itemize}
   The above items cover all the possible choices of one monomial in $\apolar$,
   up to swapping $\alpha_0$ with $\alpha_1$ 
   or $\alpha_2$ with $\alpha_3$.
   Thus the claim about the border rank of $F$ is proved.
\end{example}

\begin{example}
   The methods illustrated in Examples~\ref{ex_mono_222} and \ref{ex_mono_2211} can be automated, also for other low-dimensional toric varieties.
   With a naive implementation, which among 
      all monomial subideals of $\apolar$ just searches for those with correct Hilbert function, 
      we have been able to check that the following monomials in four or five variables have the border rank predicted by the upper bound of \eqref{equ_upper_bound_monos}:
   \begin{align*}
         &  x_0^{(3)} x_1^{(3)} x_2 x_3 x_4, \quad
           x_0^{(2)} x_1^{(2)} x_2 x_3 x_4, \quad
           x_0 x_1 x_2 x_3 x_4,  
           \text{ and }\\
       &    x_0^{(a_1)} x_1^{(a_1)} x_2^{(a_2)} x_3^{(a_3)} \text{ for $3 \geqslant a_1$ and $2\geqslant a_2\geqslant a_3$.}
   \end{align*}
   The verification of the first example took 
      approximately 1 hour and 30 minutes, 
      while the smaller examples took up to a couple of minutes.
   The larger cases we tried exhausted the memory of 
      the machine we worked with, 
      but we did not try to make 
      the algorithm efficient in any way.
\end{example}

\begin{example}\label{ex_monomials_monotonic_border_rank}
   The border rank of monomials is monotonic in the exponents. 
   That is (for any smooth toric variety $X$), if 
   $F= x_1^{(a_1)} \dotsm x_{n+w}^{(a_{n+w})}$
    and 
   $G= x_1^{(b_1)} \dotsm x_{n+w}^{(b_{n+w})}$
   with $a_i \leqslant b_i$ for all $i$, 
   then $\borderrank{F}\leqslant \borderrank{G}$.
   There is a straightforward and elementary way to verify this, 
   but we present an argument involving the border apolarity 
   method as an illustration.
   Indeed, $\apolar[G] \subset \apolar$,
   and thus any ideal $I$ coming from move-fit for $G$
     works for $F$ as well.
   In particular, it is enough to verify whether we have equality in \eqref{equ_upper_bound_monos} for monomials with $a_0 = a_1$.
\end{example}

If $X=\PP^n$, Landsberg and Teitler in 
\cite[Thm~11.3]{landsberg_teitler_ranks_and_border_ranks_of_symm_tensors}
show that the border rank is equal to the upper bound of \eqref{equ_upper_bound_monos} for unbalanced monomials, 
that is, for $F=x_0^{(a_0)} \dotsm x_{n}^{(a_{n})}$ 
  with $a_0 \geqslant a_1 + \dotsb + a_n$.
We treat the next case (a slightly less unbalanced monomial) in the following example.

\begin{example}\label{ex_almost_unbalanced}
   Suppose $X=\PP^n$,
     $F=x_0^{(a_0)} \dotsm x_{n}^{(a_{n})}$,
     and $a_0 + 1 = a_1 + \dotsb + a_n$ with $a_i>0$.
   Then the border rank of $F$ is equal to 
     $r=(a_1+1)\dotsm(a_n+1)$.
   Indeed, in this case 
     $\codim (\apolar_{a_0+1} \subset S_{a_0+1}) = r-1$,
     thus if $\borderrank{F} \leqslant r-1$, 
     then the monomial ideal $I$ from the move-fit 
     satisfies $I_{a_0+1} = \apolar_{a_0+1}$.
     But $\apolar_{a_0+1}$ contains powers of all variables, thus the Hilbert polynomial of $I$ is equal to $0$, a contradiction.
\end{example}

\subsection{Macaulay ideal growth}   
  
   We will use monomial ideals contained in the apolar ideal 
     of a monomial. 
   In sufficiently low degrees the apolar ideal of a monomial has 
     a disjointness property,
     that is, the parts coming from different generators 
     are linearly independent, and the monomial subideal 
     will split accordingly.
   Therefore, we will be interested in 
     minimising the growth 
     of monomial submodules of a graded free module.
   In the case of a free module with one generator over the standard graded polynomial ring, 
     the answer is provided by a classical theorem of Macaulay.
   We need to recall the following definition.
   
   \begin{defin}
      Suppose $X=\PP^n$, $S=\CC[\fromto{\alpha_0}{\alpha_n}]$, 
        and $d$ is a positive integer.
      Then by \emph{lex-segment} in degree $d$ of colength $r$ we mean the linear subspace $\Lex_d^r \subset S_d$ 
        of codimension $r$ spanned by 
        the last $\dim S_d-r$ monomials of $S_d$ 
        in the \grevlex{} (reverse degree-lexicographical) order.
   \end{defin}

For a fixed positive integer $d$, 
   any non-negative integer $r$ can be uniquely written as
   \[
      r= \tbinom{a_d}{d} + \tbinom{a_{d-1}}{d-1} + \dotsb + \tbinom{a_1}{1},
   \]
   where $a_i$ are integers (called the Macaulay coefficients) 
   such that 
   $a_d> a_{d-1} > \dotsb > a_1 \geqslant 0$ 
   \cite[Lem.~4.2.6]{bruns_herzog_Cohen_Macaulay_rings}.
The Macaulay exponent is
\[
   r^{\langle d\rangle}:= \tbinom{a_d+1}{d+1} + \tbinom{a_{d-1}+1}{d} + \dotsb + \tbinom{a_1+1}{2}.
\]   
   \begin{lemma}[{Macaulay, 
       \cite[Prop.~4.2.8, Cor.~4.2.9]{bruns_herzog_Cohen_Macaulay_rings}
       or \cite[Thm~3.3, Prop.~3.7]{green_generic_initial_ideals}}]
      \label{lem_Macaulay_bound}
      Suppose $X=\PP^n$, and $S=\CC[\fromto{\alpha_0}{\alpha_n}]$, 
        and for some positive integer $d$, 
        a linear subspace $I \subset S_d$ 
        has codimension $r$. 
      Then 
      \[
        \codim \left(I\cdot S_1 \subset S_{d+1}\right) 
        \leqslant  \codim \left(\Lex_d^r \cdot S_1 \subset S_{d+1}\right)
        = r^{\langle d \rangle}.
      \]
   \end{lemma}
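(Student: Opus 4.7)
The plan is to follow the classical three-step argument: reduce to monomial ideals via generic initial ideals, show that the lex segment is extremal among monomial ideals through a combinatorial compression, and finally compute the value $r^{\langle d\rangle}$ for the lex segment by direct binomial manipulation. This strategy is developed in \cite{bruns_herzog_Cohen_Macaulay_rings} and \cite{green_generic_initial_ideals}; the task here is to adapt it to the codimension formulation used in the statement.

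First I would replace $I$ by its generic initial ideal. After acting on $I$ by a generic $g \in \operatorname{GL}_{n+1}$ and taking the initial ideal with respect to \grevlex, one obtains a monomial ideal $J$. By Galligo's theorem $J$ is Borel-fixed, and the Hilbert functions of $S/J$ and $S/I$ coincide in every degree, so in particular $\codim(J \subset S_d) = r$ in degree $d$. The dimension of the image of the multiplication map $I \otimes S_1 \to S_{d+1}$ is upper semicontinuous under one-parameter degenerations, hence this image can only shrink under the passage from $g\cdot I$ to $\operatorname{in}_{\grevlex}(g\cdot I)$. Consequently $\codim(I\cdot S_1 \subset S_{d+1}) \leq \codim(J \cdot S_1 \subset S_{d+1})$, and it suffices to prove the bound when $I$ is the (Borel-fixed) monomial ideal $J$.

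Second, among all sets $M$ of $\dim S_d - r$ degree-$d$ monomials, I would show that the choice $M = \Lex_d^r$ minimizes the cardinality $|M \cdot \{\alpha_0, \ldots, \alpha_n\}|$, equivalently maximizes the codimension of $\langle M\rangle \cdot S_1$ in $S_{d+1}$. This is a Kruskal-Katona style compression: starting from $M$, one swaps any $m \in M$ with an earlier monomial $m' \notin M$ in the \grevlex{} order and checks that each such elementary shift weakly decreases $|M\cdot \{\alpha_0,\ldots,\alpha_n\}|$. The process terminates at the \grevlex-initial segment $\Lex_d^r$. The final equality $\codim(\Lex_d^r \cdot S_1) = r^{\langle d \rangle}$ is then a direct combinatorial calculation: writing $r = \binom{a_d}{d} + \cdots + \binom{a_1}{1}$ in Macaulay form, one decomposes $\Lex_d^r$ into blocks according to the least variable dividing a monomial, applies the binomial identity $\binom{a_i}{i} \mapsto \binom{a_i+1}{i+1}$ block by block, and sums.

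The main obstacle is verifying that each compression step does not increase $|M\cdot\{\alpha_0, \ldots, \alpha_n\}|$, since one must carefully track when the ``children'' of an earlier monomial already appear among products of monomials still in $M$. The cleanest route exploits that after the gin reduction $M$ is Borel-fixed: closure under the Borel action forces many coincidences among children to occur automatically, so the counting collapses to a cleaner combinatorial problem, which one resolves by a double induction on $n$ and $d$ after specializing the variable of least \grevlex{} weight to zero.
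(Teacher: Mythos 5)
The paper does not actually prove this lemma --- it is quoted from the cited references (Bruns--Herzog, Green) --- so you are re-deriving a classical result, and your overall skeleton (reduce to monomial subspaces via the generic initial ideal, prove extremality of the lex segment, then compute $r^{\langle d\rangle}$ by the block-by-block binomial count) is indeed the route those references take; the gin reduction in your first step is fine, and even more than is needed, since for any term order $\operatorname{in}(I)\cdot S_1\subseteq\operatorname{in}(I\cdot S_1)$ already gives the reduction to a monomial subspace of the same dimension. The genuine gap is in your second step, in two respects. First, you have the extremal set at the wrong end of the order: the paper's $\Lex_d^r$ is spanned by the \grevlex{}-\emph{last} monomials (equivalently, a lex-\emph{initial} segment after reversing the order of the variables), not the ``\grevlex{}-initial segment''; pushing members of $M$ to earlier positions terminates at the \grevlex{}-initial segment, which is not extremal. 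For instance, with $n=2$, $d=2$ and $|M|=3$, the \grevlex{}-initial segment $\{\alpha_0^2,\alpha_0\alpha_1,\alpha_1^2\}$ has $7$ monomials in $M\cdot\{\alpha_0,\alpha_1,\alpha_2\}$, whereas $\Lex_2^3=\{\alpha_0\alpha_2,\alpha_1\alpha_2,\alpha_2^2\}$ has only $6=\dim S_3-3^{\langle 2\rangle}$. Second, and independently of orientation, the asserted monotonicity of arbitrary elementary swaps is false: for $M=\{\alpha_0\alpha_2,\alpha_1\alpha_2\}$ the set $M\cdot\{\alpha_0,\alpha_1,\alpha_2\}$ has $5$ elements, but replacing $\alpha_1\alpha_2$ by the \grevlex{}-earlier monomial $\alpha_1^2$ gives a set with $6$ elements. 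So a single swap of the kind you describe can strictly increase the shadow, and Borel-fixedness of the gin does not by itself repair a swap-by-swap scheme.

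What does work is exactly what you gesture at in your closing paragraph, and it is where the real content of Macaulay's theorem sits: either compress with respect to a variable (slice $M$ according to the exponent of the last variable, replace each slice by a lex segment in the remaining variables, and induct on the number of variables --- the Clements--Lindstr\"om/Macaulay argument underlying the Bruns--Herzog reference), or avoid compression entirely and argue by restriction to a generic hyperplane with a double induction on $n$ and $d$, using Borel-fixedness of the gin in an essential way (Green's proof). Either of these, combined with your (correct) computation that $\codim\left(\Lex_d^r\cdot S_1\subset S_{d+1}\right)=r^{\langle d\rangle}$, yields the lemma; but as written, the middle step of your proposal asserts a false monotonicity and names the wrong limiting configuration, so it is not yet a proof.
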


\begin{rmk}\label{rem_growth_of_algebras_vs_growth_of_ideals}
  The Macaulay Lemma is referred to as
    giving the \emph{maximal} possible 
    growth of a standard graded \emph{algebra} $S/(I)$. 
  Note that this upper bound does not depend on the number 
    of generators (variables) of the algebra,
    but it strongly depends on the degree. 
  From a complementary point of view,
    we can reinterpret this statement as giving the  \emph{minimal}
    growth of homogeneous \emph{ideals} in the polynomial ring.
  With this approach, one can easily notice that 
    the growth of an ideal strictly depends on 
    the number of variables,
    but it does not really depend on the degree we look at, 
    only on the shape of the ideal inside the polynomial ring.
  For instance, the growth of $I\subset S_e$ from degree $e$ to $e+1$ is the same as the growth of 
  $\alpha_0^{d-e}\cdot I \subset S_d$ from degree $d$ to $d+1$.
  Moreover $I$ is a lex-segment 
     if and only if $\alpha_0^{d-e}\cdot I$ is.
\end{rmk}

We mention two properties of the Macaulay exponent
  $r^{\langle d \rangle}$ that we will use.
\begin{lemma}\label{lem_moving_ideals_with_lots_of_entries}
  Suppose $d$, $e$, $q$, $r$ are non-negative integers,
    and $d \geqslant e > 0$.
  Then $q^{\langle d \rangle} + r^{\langle e\rangle} 
        \leqslant (q+r)^{\langle e \rangle}$.
\end{lemma}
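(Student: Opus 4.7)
The plan is to split the inequality into a superadditivity statement at $d = e$ and a monotonicity-in-$d$ statement, and then glue them.

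\textbf{Step 1 (superadditivity in a single degree).} First I would show $q^{\langle e\rangle} + r^{\langle e\rangle} \leq (q+r)^{\langle e\rangle}$ by exhibiting a codimension-$(q+r)$ monomial subspace whose first shift has codimension at least $q^{\langle e\rangle} + r^{\langle e\rangle}$, and then invoking Lemma~\ref{lem_Macaulay_bound}. Concretely, take two disjoint variable sets $A = \{\alpha_0,\dots,\alpha_N\}$ and $B = \{\beta_0,\dots,\beta_N\}$ with $N$ sufficiently large, put $S = \CC[A \cup B]$, and define
\[
  I \;=\; L_A \,\oplus\, L_B \,\oplus\, P_{\mathrm{mix}} \;\subset\; S_e,
\]
where $L_A = \Lex_e^q \subset \CC[A]_e$ and $L_B = \Lex_e^r \subset \CC[B]_e$ are the lex-segments of the respective colengths inside the smaller polynomial rings, and $P_{\mathrm{mix}}$ spans all degree-$e$ monomials of $S$ involving variables from both $A$ and $B$. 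Then $\codim(I \subset S_e) = q + r$. Because $I$ is monomial and $S_{e+1}$ splits as a direct sum of its pure-$A$, pure-$B$, and mixed monomial subspaces, so does $I \cdot S_1$; the pure-$A$ piece is exactly $L_A \cdot \CC[A]_1$, which by the definition of the Macaulay exponent has codimension $q^{\langle e\rangle}$ inside $\CC[A]_{e+1}$, and symmetrically for $B$. Hence $\codim(I\cdot S_1 \subset S_{e+1}) \geq q^{\langle e\rangle} + r^{\langle e\rangle}$, while Lemma~\ref{lem_Macaulay_bound} gives $\codim(I\cdot S_1) \leq (q+r)^{\langle e\rangle}$, proving the superadditivity.

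\textbf{Step 2 (monotonicity in the degree).} Next I would establish $q^{\langle d\rangle} \leq q^{\langle e\rangle}$ for $d \geq e$. Applying Pascal's identity $\binom{a+1}{k+1} = \binom{a}{k} + \binom{a}{k+1}$ termwise to the Macaulay expansion $q = \sum_i \binom{a_i}{i}$ in degree $d$ yields
\[
  q^{\langle d\rangle} - q \;=\; \sum_i \binom{a_i}{i+1},
\]
so the claim reduces to showing this ``excess'' is weakly decreasing in $d$. This follows by induction on $d$: passing from the degree-$d$ expansion of $q$ to the degree-$(d+1)$ one only refines the leading entry into binomials at higher index, whose contributions to the excess sum to no more than the original, and the remaining terms carry over or themselves split analogously. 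This is a classical combinatorial property of Macaulay expansions.

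Combining the two steps gives $q^{\langle d\rangle} + r^{\langle e\rangle} \leq q^{\langle e\rangle} + r^{\langle e\rangle} \leq (q+r)^{\langle e\rangle}$. The main obstacle is Step~2: the careful bookkeeping of how the Macaulay expansion of a fixed integer reshuffles as $d$ varies, and the verification that the relevant ``excess'' is monotone. Step~1, by contrast, is clean once one hits on the ``two disjoint lex-segments plus everything mixed'' construction, which directly packages the two individual Macaulay shifts into a single shift inside the larger polynomial ring $S$ where the Macaulay bound can be applied to the combined codimension $q+r$.
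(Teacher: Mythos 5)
Your Step 1 is correct and is exactly the paper's argument: place $\Lex_e^q$ and $\Lex_e^r$ in disjoint variable sets, throw in all mixed monomials, and compare the shifted codimension against $(q+r)^{\langle e\rangle}$ via Lemma~\ref{lem_Macaulay_bound}. Your overall reduction is also the paper's: superadditivity at fixed degree plus monotonicity $q^{\langle d\rangle}\leqslant q^{\langle e\rangle}$ in the degree index.

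The gap is in Step~2, which you flag as the obstacle but do not actually close. The sketched induction --- ``passing from the degree-$d$ expansion of $q$ to the degree-$(d+1)$ one only refines the leading entry into binomials at higher index'' --- does not describe how Macaulay expansions actually behave. For instance, $q=5$ has degree-$2$ expansion $\binom{3}{2}+\binom{2}{1}$ and degree-$3$ expansion $\binom{4}{3}+\binom{2}{2}$; the new leading binomial $\binom{4}{3}=4$ exceeds the old leading term $\binom{3}{2}=3$, so the new leading entry absorbs part of the old tail rather than being a refinement of the old leading entry. Tracking the ``excess'' $q^{\langle d\rangle}-q=\sum_i\binom{a_i}{i+1}$ across the re-expansion from degree $d$ to degree $d+1$ is exactly the bookkeeping that has no clean termwise description, which is why your induction has no content as written.

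The idea you are missing is that Step~2 follows \emph{from} Step~1, rather than from direct combinatorial bookkeeping. Write $\binom{a}{d}$ as a hockey-stick sum of degree-$(d-1)$ binomial coefficients, $\binom{a}{d}=\binom{a-1}{d-1}+\binom{a-2}{d-1}+\dots+\binom{d-1}{d-1}$, and observe that each summand $\binom{a-k-1}{d-1}$ has $\langle d-1\rangle$-exponent $\binom{a-k}{d}$, so that $\binom{a}{d}^{\langle d\rangle}=\binom{a+1}{d+1}=\binom{a}{d}+\binom{a-1}{d}+\dots+\binom{d}{d}$ is precisely the sum of those $\langle d-1\rangle$-exponents; then Step~1 (superadditivity) bounds this sum by $\binom{a}{d}^{\langle d-1\rangle}$. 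For general $q$ one peels off the leading binomial $\binom{a_d}{d}$ from the Macaulay expansion, notes that the tail's contribution to $q^{\langle d\rangle}$ is literally its $\langle d-1\rangle$-exponent, applies the special case to the leading term, and invokes superadditivity once more. This gives $q^{\langle d\rangle}\leqslant q^{\langle d-1\rangle}$ cleanly and then iterates down to $e$.
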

\begin{prf}
  We first prove the lemma for $d=e$.
  Consider two independent polynomial rings 
    $\CC[\fromto{\alpha_1}{\alpha_k}]$ 
    and $\CC[\fromto{\beta_1}{\beta_l}]$,
    both having sufficiently many variables.
  Choose two subspaces
  $I_{\alpha}=\Lex_e^q \subset \CC[\fromto{\alpha_1}{\alpha_k}]_e$
  and  
  $I_{\beta}= \Lex_e^r \subset \CC[\fromto{\beta_1}{\beta_l}]_e$.
  Finally, let 
  \[
    I = (I_{\alpha}) + (I_{\beta}) + (\alpha_i\beta_j \mid 1 \leqslant i \leqslant k, 1\leqslant j \leqslant l) \subset  \CC[\fromto{\alpha_1}{\alpha_k},\fromto{\beta_1}{\beta_l}].
  \]
   Setting $A_{\alpha}$, $A_{\beta}$ and $A$ to be the quotient algebras by $(I_{\alpha})$, $(I_{\beta})$ and $I$ respectively,
   we have
    \begin{align*}
         \dim\,(A_{\alpha})_e &= q, & \dim\,(A_{\beta})_e &= r, &
          \dim\,(A)_e &= q + r,\\
         \dim\,(A_{\alpha})_{e+1} &= q^{\langle e \rangle}, & \dim\,(A_{\beta})_{e+1} &= r^{\langle e \rangle},&
          \dim\,(A)_{e+1} &= q^{\langle e \rangle} + r^{\langle e \rangle}.
    \end{align*}
    The last column follows from the observation that
      $A = A_{\alpha} \oplus A_{\beta}$, 
      while the bottom row in the first two columns is 
      a consequence of Lemma~\ref{lem_Macaulay_bound}.
    Since $\dim (A)_{e+1} \leqslant (q+r)^{\langle e \rangle}$ again by Lemma~\ref{lem_Macaulay_bound},
    we obtain
    \begin{equation}\label{equ_subadditivity_of_Macaulay_exponents}
      q^{\langle e \rangle} + r^{\langle e\rangle}\leqslant 
      (q+r)^{\langle e\rangle}.
    \end{equation}
    
    Next we show another special case of the claim with 
    $q= \binom{a}{d}$ for some $a$, $r=0$, and $e=d-1$.
    Then
    \begin{alignat*}{7}
      q &=&\tbinom{a}{d} &= \tbinom{a-1}{d-1}&+&\tbinom{a-2}{d-1} &+ 
      \dotsb  + & \tbinom{d}{d-1} &+& \tbinom{d-1}{d-1}, \text{ and}\\
      q^{\langle d \rangle} &= &\tbinom{a+1}{d+1} &=\hspace{0.9em} \tbinom{a}{d}&+&\tbinom{a-1}{d} &+ 
      \dotsb  +&\tbinom{d+1}{d} &+& \hspace{0.9em} \tbinom{d}{d}\\
        & & &  = \tbinom{a-1}{d-1}^{\langle d-1\rangle}&+&\tbinom{a-2}{d-1}^{\langle d-1\rangle} &+
      \dotsb  +&\tbinom{d}{d-1}^{\langle d-1\rangle} &+& \tbinom{d-1}{d-1}^{\langle d-1\rangle}\\
        & & &  \stackrel{\text{\eqref{equ_subadditivity_of_Macaulay_exponents}}}{\le} q^{\langle d-1\rangle}.
    \end{alignat*}
    Thus $\binom{a}{d}^{\langle d \rangle} \leqslant \binom{a}{d}^{\langle d -1\rangle}$.
    
    If $q$ is arbitrary, then we express it using the Macaulay coefficients:
    \[
       q= \tbinom{a_d}{d} + \underbrace{\tbinom{a_{d-1}}{d-1} + \dotsb + \tbinom{a_1}{1}}_{=:q'}
    \]
    with $a_d > a_{d-1} > \dotsb > a_1 \geqslant 0$.
    Then 
  \begin{equation}\label{equ_increasing_Macaulay_exponent_decreases_result}
    q^{\langle d \rangle} = 
    \tbinom{a_d}{d}^{\langle d \rangle} 
    + (q')^{\langle d -1 \rangle} 
    \leqslant \tbinom{a_d}{d}^{\langle d-1 \rangle} 
    + (q')^{\langle d -1 \rangle} \stackrel{\text{\eqref{equ_subadditivity_of_Macaulay_exponents}}}{\le} 
    \left( \tbinom{a_d}{d} + q'\right)^{\langle d -1 \rangle} = q^{\langle d -1 \rangle}.
  \end{equation}
  The final claim of the lemma follows 
     by successively applying
     \eqref{equ_increasing_Macaulay_exponent_decreases_result}
     to show $q^{\langle d \rangle} \leqslant q^{\langle e \rangle}$
     for any $e\leqslant d$ and then combining it 
     with \eqref{equ_subadditivity_of_Macaulay_exponents}.
\end{prf}

\begin{lemma}\label{lem_moving_ideals_with_few_entries}
  Suppose $d$, $e$, $q$, $r$, $n$ are non-negative integers
    such that $d \geqslant e > 0$, 
    $0 \leqslant q \leqslant \binom{n+d}{d}$, 
    $0 \leqslant r \leqslant \binom{n+e}{e}$,
    and $q+r \geqslant \binom{n+e}{e}$.
  Then 
  \[
    q^{\langle d \rangle} + r^{\langle e\rangle} 
  \leqslant \left(q+r - \tbinom{n+e}{e}\right)^{\langle d \rangle} + \tbinom{n+e}{e}^{\langle e \rangle}.
  \]
\end{lemma}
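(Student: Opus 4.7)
My plan is to adapt the strategy of Lemma~\ref{lem_moving_ideals_with_lots_of_entries}, working concretely inside $R:=\CC[x_0,\ldots,x_n]$, whose degree-$k$ part has dimension $\binom{n+k}{k}$. Set $N:=\binom{n+e}{e}$, $M:=\binom{n+d}{d}$, and introduce $u:=N-r$ and $s:=q+r-N$. The hypotheses translate to $0\leqslant u\leqslant N$, $0\leqslant s$, and $s+u=q\leqslant M$. After substituting $r=N-u$ and $q+r-N=s$, the claimed inequality rearranges to
\[
(s+u)^{\langle d\rangle}-s^{\langle d\rangle}\;\leqslant\;N^{\langle e\rangle}-(N-u)^{\langle e\rangle},
\]
which is a bound on the ``$u$-step marginal'' of the Macaulay function in degree $d$ by the ``top $u$-step marginal'' in degree $e$.

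The first step will be the key identity
\[
N^{\langle e\rangle}-(N-u)^{\langle e\rangle}\;=\;M^{\langle d\rangle}-(M-u)^{\langle d\rangle}\qquad (0\leqslant u\leqslant N).
\]
I would prove this by a monomial bijection. Since $N$ equals the number of monomials of $R_e$, the $N$ smallest monomials of $R_d$ in \grevlex{} are exactly those divisible by $x_n^{d-e}$, and the map $m\mapsto x_n^{d-e}m$ gives a bijection $\Lex_e^{N-u}\xrightarrow{\sim}\Lex_d^{M-u}$ whenever $u\leqslant N$. Multiplication by $x_n^{d-e}$ also identifies $\Lex_e^{N-u}\cdot R_1$ with $\Lex_d^{M-u}\cdot R_1$, so these two sets of monomials have equal cardinality. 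Computing each cardinality via $\dim R_{k+1}-(\text{codim of shift})$ and rearranging yields the identity.

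Using this identity, the inequality reduces to the purely degree-$d$ statement
\[
(s+u)^{\langle d\rangle}+(M-u)^{\langle d\rangle}\;\leqslant\;s^{\langle d\rangle}+M^{\langle d\rangle}\qquad (0\leqslant s,\ s+u\leqslant M). \tag{$\star$}
\]
Since for lex-segments in $R$ one has $\Lex_d^a\cdot R_1=\Lex_{d+1}^{a^{\langle d\rangle}}$ and $\Lex_d^M=0$, the bound $(\star)$ is equivalent to
\[
\bigl|\Lex_d^s\cdot R_1\setminus\Lex_d^{s+u}\cdot R_1\bigr|\;\leqslant\;\bigl|\Lex_d^{M-u}\cdot R_1\bigr|.
\]
Consider the map $\delta\colon R_{d+1}\to R_d$ sending $y$ to $y/x_{k(y)}$, where $k(y)$ is the smallest index with $x_{k(y)}\mid y$. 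For $y\in\Lex_d^a\cdot R_1$ the image $\delta(y)$ is forced to lie in $\Lex_d^a$ (because $\Lex_d^a$ is downward-closed in \grevlex), and the fibre over $z$ has size $k(z)+1$. Hence both sides of the reduced bound become sums $\sum_{z\in W}(k(z)+1)$ over $u$-element ``windows'' $W$ of \grevlex-consecutive monomials of $R_d$, and $(\star)$ is the assertion that this window-sum is maximised when $W$ is the initial (bottom) window $\Lex_d^{M-u}$.

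The main obstacle is precisely this last combinatorial comparison: the sequence $(k(z)+1)$ listed from the smallest monomial of $R_d$ upward is not monotone, so a naive swap argument fails. The plan to overcome this is an induction on $n$ based on the $x_n$-layering of $R_d$: every monomial $z\in R_d$ has a unique decomposition $z=x_n^{j}m$ with $m\in\CC[x_0,\ldots,x_{n-1}]_{d-j}$ and $x_n\nmid m$, and \grevlex{} on $R_d$ restricts to \grevlex{} on each layer and orders layers by decreasing $j$. Since an initial window $\Lex_d^{M-u}$ is a union of complete lowest layers plus (possibly) an initial segment of one more layer, one can peel off layers from the bottom of an arbitrary window $W$, pair them off layer-by-layer with the bottom window, and thereby reduce the comparison either to the inductive case on $\CC[x_0,\ldots,x_{n-1}]$ or to the trivial observation $k(x_n^d)+1=n+1$ being the unique maximum. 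The bookkeeping for windows straddling several layers is what will require care.
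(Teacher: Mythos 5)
Your reduction of the general case to the single-degree inequality $(\star)$ is correct, and the key identity $N^{\langle e\rangle}-(N-u)^{\langle e\rangle}=M^{\langle d\rangle}-(M-u)^{\langle d\rangle}$ via the multiplication-by-$x_n^{d-e}$ bijection is a nice observation (the same layering fact underlies Remark~\ref{rem_growth_of_algebras_vs_growth_of_ideals}). Likewise, your reinterpretation of $(\star)$ as the statement that $\sum_{z\in W}(k(z)+1)$ over a window $W$ of $u$ \grevlex-consecutive monomials of $R_d$ is maximised at the bottom window $\Lex_d^{M-u}$, using the map $\delta$ with fibre sizes $k(z)+1$, is correct. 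However, you do not prove this last claim — you outline an induction on $n$ by $x_n$-layering and then explicitly flag that ``the bookkeeping for windows straddling several layers is what will require care.'' That bookkeeping is exactly the content of the lemma; as you noted, $(k(z)+1)$ is not monotone, and no swap argument or clean layer-pairing is given. As written, the argument has a genuine gap at its last step.

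It is worth noting that $(\star)$ is precisely the $d=e$ instance of the lemma (take $q'=s+u$, $r'=M-u$), so your reduction has shrunk the problem to a special case, not removed the core difficulty. The paper instead proves the full statement in one step: set $K:=\alpha_0^{e+1}\Lex_d^q+\alpha_1^{d+1}\Lex_e^r\subset S_{d+e+1}$, observe that the two summands are disjoint (their monomials would need $\alpha_0$- and $\alpha_1$-exponents summing to more than $d+e+1$) so $\dim K\cdot S_1\leq\dim(\Lex_d^q\cdot S_1)+\dim(\Lex_e^r\cdot S_1)$, and compare $K$ to the lex-segment of $S_{d+e+1}$ of the same dimension, which is $\alpha_0^{e+1}\Lex_d^{q+r-\binom{n+e}{e}}$; Lemma~\ref{lem_Macaulay_bound} then gives the inequality directly. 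This same embedding argument would also finish your $(\star)$ (embed $\Lex_d^{s+u}$ and $\Lex_d^{M-u}$ disjointly into $S_{2d+1}$), so the $\delta$-map window-sum route is an unnecessary detour. I would suggest either proving the window-sum maximisation carefully (it is true, but requires an argument beyond the sketch) or, better, finishing $(\star)$ — or the original statement directly — with the Macaulay-embedding trick.
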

\begin{prf}
   We reinterpret the statement of the lemma in terms 
     of growth of ideals.
   Let $S = \CC[\fromto{\alpha_0,\alpha_1}{\alpha_n}]$, 
     and consider four ideals in $S$: 
     $I = (\Lex_d^q)$, $J = (\Lex_e^r)$, 
     $I'= \left(\Lex_d^{q+r - \binom{n+e}{e}}\right)$,
     $J'= 0$.
   The inequalities guarantee that the lengths of the above
     segments make sense (are non-negative and do not exceed 
     the dimensions of $S_d$ or $S_e$ respectively).
   The claim of the lemma is that
   \[
      \dim (S/I)_{d+1} + \dim (S/J)_{e+1}
      \leqslant  \dim (S/I')_{d+1} + \dim (S/J')_{e+1}.
   \]
   In terms of minimising the growth of ideals instead of maximising the growth of algebras 
   as in Remark~\ref{rem_growth_of_algebras_vs_growth_of_ideals},
   this is equivalent to
   \begin{equation}
     \label{equ_need_to_show_for_moving_all_monomials_to_one_part}
     \dim I_{d+1} + \dim J_{e+1} \geqslant \dim I'_{d+1} + 
     \dim J'_{e+1} = \dim I'_{d+1}.
   \end{equation}

   This last claim follows from Lemma~\ref{lem_Macaulay_bound}
     applied to 
     $K:=\alpha_0^{e+1} \cdot I_d + \alpha_1^{d+1} \cdot J_e \subset S_{d+e+1}$.
     Note that both $K$ and $K\cdot S_1$ are direct sums:
     \[
       K=\alpha_0^{e+1} \cdot I_d \oplus \alpha_1^{d+1} \cdot J_e \quad  \text{ and } \quad 
       K\cdot S_1 =\alpha_0^{e+1} \cdot I_{d+1} \oplus \alpha_1^{d+1} \cdot J_{e+1}.
     \]
     Thus $\dim K\cdot S_1$ is equal to the left hand side of
       \eqref{equ_need_to_show_for_moving_all_monomials_to_one_part}.
     The right hand side is equal to the dimension of  
       $\alpha_0^{e+1}\cdot I'_{d+1} = (\alpha_0^{e+1}\cdot I'_{d})\cdot S_1$,
       where $\alpha_0^{e+1}\cdot I'_{d}$ is   
       the lex-segment of the same dimension as $K$.
     Thus the inequality 
     \eqref{equ_need_to_show_for_moving_all_monomials_to_one_part}
     indeed follows from Lemma~\ref{lem_Macaulay_bound}.
\end{prf}

\subsection{Border rank of monomials in three variables and generalisations}

   We need a generalisation of Lemma~\ref{lem_Macaulay_bound} 
     to monomial ideals that are spread across a couple 
     of disjoint $S_{d_j}$.

   \begin{defin}\label{def_Lex_bar}
      Suppose $X=\PP^n$ and $S=\CC[\fromto{\alpha_0}{\alpha_n}]$.
      Let $\bar{d}=(\fromto{d_1}{d_j})$ 
         be a sequence of $j$ positive integers with 
         $d_1 \leqslant \dotsb \leqslant d_j$.
      Consider the linear space
         $\overline{S}_{\bar{d}} = S_{d_1} \oplus \dotsb \oplus S_{d_j}$.
         Let $\overline{S}_{\bar{d}, i}$ denote the $i$-th factor of $\overline{S}_{\bar{d}}$, isomorphic to $S_{d_i}$
           (note that there might be many factors isomorphic 
            to $S_{d_i}$).
      For an integer $0\leqslant r \leqslant \dim \overline{S}_{\bar{d}}$,
         define $\overline{\Lex}_{\bar{d}}^r$ 
         to be the linear subspace of $\overline{S}_{\bar{d}}$
         of codimension $r$
         such that for some $i_0\in \setfromto{1}{j}$
         we have:
         \begin{itemize}
          \item $\overline{\Lex}_{\bar{d}}^r$ is a direct sum 
                 of subspaces $\overline{\Lex}_{\bar{d},i}^r \subset \overline{S}_{\bar{d}, i}$,
          \item for all $i<i_0$, 
        $\overline{\Lex}_{\bar{d}, i}^r = 0$,
          \item for all $i>i_0$,  
        $\overline{\Lex}_{\bar{d}, i}^r = \overline{S}_{\bar{d}, i}$,
          \item $\overline{\Lex}_{\bar{d}, i_0}^r = \Lex_{d_{i_0}}^{r'}$ for some $r'$.
         \end{itemize}
      Informally,
         $\overline{\Lex}_{\bar{d}}^0 = \overline{S}_{\bar{d}}$,
         and to get $\overline{\Lex}_{\bar{d}}^{r}$ from 
         $\overline{\Lex}_{\bar{d}}^{r-1}$ we remove the 
         first monomial (in the \grevlex{} order) 
         from the first non-zero summand. 
   \end{defin}

   \begin{prop}\label{prop_Macaulay_bound_for_disjoint_modules}
      With  notation as in Definition~\ref{def_Lex_bar},
      suppose $W\subset \overline{S}_{\bar{d}}$ 
        is a direct sum  
        $\bigoplus_{i=1}^j W_i$ 
        of subspaces  $W_i \subset \overline{S}_{\bar{d},i}$.
      Let $r= \codim (W \subset \overline{S}_{\bar{d}})$ 
        and define $\bar{1} = (1,1,\dotsc, 1)$ ($j$ ones).
      Then 
      \[
         \codim\!\left(W \cdot S_1 \subset 
           \overline{S}_{\bar{d} + \bar{1}} \right)
         \leqslant \codim\!\left( \overline{\Lex}_{\bar{d}}^r \cdot S_1
         \subset \overline{S}_{\bar{d} + \bar{1}} \right).
      \]
  \end{prop}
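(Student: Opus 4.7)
The plan is to decompose the statement into a componentwise application of the single-degree Macaulay bound (Lemma~\ref{lem_Macaulay_bound}), followed by a combinatorial redistribution argument driven by Lemmas~\ref{lem_moving_ideals_with_lots_of_entries} and \ref{lem_moving_ideals_with_few_entries}, which tell us exactly how the Macaulay exponent behaves when mass is moved between degrees.

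First, write $W = \bigoplus_{i=1}^{j} W_i$ and set $r_i := \codim(W_i \subset \overline{S}_{\bar d, i})$, so that $\sum r_i = r$. Since multiplication by $S_1$ respects the direct sum decomposition, $W \cdot S_1 = \bigoplus_i (W_i \cdot S_1)$ inside $\overline{S}_{\bar d + \bar 1} = \bigoplus_i S_{d_i+1}$, and Lemma~\ref{lem_Macaulay_bound} applied to each summand yields
\[
\codim(W \cdot S_1 \subset \overline{S}_{\bar d + \bar 1}) = \sum_{i=1}^j \codim(W_i \cdot S_1 \subset S_{d_i+1}) \leqslant \sum_{i=1}^j r_i^{\langle d_i \rangle}.
\]
For $\overline{\Lex}_{\bar d}^{r}$ the analogous identity is an equality, since each summand $\overline{\Lex}_{\bar d, i}^{r}$ is either $0$, the full $S_{d_i}$, or a genuine lex-segment (at $i = i_0$), and in all three cases Macaulay is tight. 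Writing $(r_i^{\mathrm{lex}})$ for the corresponding codimensions, namely $r_i^{\mathrm{lex}} = \dim S_{d_i}$ for $i < i_0$, $r_i^{\mathrm{lex}} = 0$ for $i > i_0$, and $r_{i_0}^{\mathrm{lex}} = r - \sum_{i < i_0} \dim S_{d_i}$, we obtain
\[
\codim(\overline{\Lex}_{\bar d}^{r} \cdot S_1 \subset \overline{S}_{\bar d + \bar 1}) = \sum_{i=1}^j (r_i^{\mathrm{lex}})^{\langle d_i \rangle}.
\]

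The proposition therefore reduces to the purely numerical inequality
\[
\sum_{i=1}^j r_i^{\langle d_i \rangle} \leqslant \sum_{i=1}^j (r_i^{\mathrm{lex}})^{\langle d_i \rangle}
\]
valid for every admissible distribution $(r_i)$ with $\sum r_i = r$ and $0 \leqslant r_i \leqslant \dim S_{d_i}$. I would prove this by iteratively transferring mass from higher to lower indices: given a non-lex $(r_i)$, pick the smallest $i$ with $r_i < \dim S_{d_i}$ and the smallest $i' > i$ with $r_{i'} > 0$. Since $d_i \leqslant d_{i'}$, one of Lemmas~\ref{lem_moving_ideals_with_lots_of_entries} and \ref{lem_moving_ideals_with_few_entries} applies: if $r_i + r_{i'} \leqslant \dim S_{d_i}$, then Lemma~\ref{lem_moving_ideals_with_lots_of_entries} (with $e = d_i$, $d = d_{i'}$, $q = r_{i'}$, $r = r_i$) consolidates both entries at position $i$; otherwise Lemma~\ref{lem_moving_ideals_with_few_entries} saturates position $i$ to $\dim S_{d_i}$ and leaves the excess at $i'$. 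In either case $\sum_i r_i^{\langle d_i \rangle}$ does not decrease.

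The main obstacle is purely bookkeeping, namely verifying that this iteration terminates at the lex configuration rather than oscillating. A convenient complexity measure is the lexicographically-ordered pair consisting of the smallest unsaturated index and the largest non-zero index; each transfer strictly decreases this pair, and the unique configuration of minimal complexity is exactly $(r_i^{\mathrm{lex}})$. Chaining the three displayed expressions then delivers $\codim(W \cdot S_1) \leqslant \codim(\overline{\Lex}_{\bar d}^{r} \cdot S_1)$ as required.
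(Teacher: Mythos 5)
Your approach matches the paper's: decompose componentwise, bound $\codim(W\cdot S_1)$ by $\sum_i r_i^{\langle d_i\rangle}$ via Lemma~\ref{lem_Macaulay_bound}, note the bound is tight for $\overline{\Lex}_{\bar d}^r$, then iterate the two-summand move from Lemmas~\ref{lem_moving_ideals_with_lots_of_entries} and \ref{lem_moving_ideals_with_few_entries} (pushing codimension to the smaller-degree summand) until the configuration is lex --- exactly the paper's ``repeat the two summands step until at most one $W_i$ is neither $0$ nor $\overline{S}_{\bar d,i}$''.

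The one flaw is your termination measure. The pair $(a,b)=$ (smallest unsaturated index, largest nonzero index) need not strictly decrease: in the ``lots of entries'' case, when some index $i''>i'$ still has $r_{i''}>0$, the transfer from $i'$ to $i$ leaves $a$ fixed at $i$ (if $r_i+r_{i'}<\dim S_{d_i}$) and leaves $b$ fixed at that larger $i''$, so $(a,b)$ is unchanged. (The assertion that the lex configuration uniquely minimises $(a,b)$ is also suspect, since $a\leqslant i_0$ for every admissible configuration.) The fix is easy: use a potential such as $\sum_i i\,r_i$, which strictly drops at every step because positive codimension is always moved from $i'$ to the strictly smaller index $i$; when no move is possible, the configuration is $r_1=\dim S_{d_1},\dotsc,r_{i-1}=\dim S_{d_{i-1}}$, $r_{i+1}=\dotsb=r_j=0$, which is lex. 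Everything else in your argument is correct and lines up with the paper's proof step for step.
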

  \begin{prf}
     If there is only one summand, that is, $j=1$, then the claim
       is Lemma~\ref{lem_Macaulay_bound}.
     If there are two summands, $j=2$,
     then the claim follows from 
     Lemmas~\ref{lem_moving_ideals_with_lots_of_entries} 
     and~\ref{lem_moving_ideals_with_few_entries}.
     
     Finally, suppose the number of summands $j$ is arbitrary.
     Pick the first summand $i$ such that 
        $W_i \ne 0$.
     If for all $i'>i$ we have $W_{i'} =\overline{S}_{\bar{d},i'}$, 
        then we are done by Lemma~\ref{lem_Macaulay_bound}.
     So suppose there is $i'>i$ such that $W_{i'} \ne \overline{S}_{\bar{d},i'}$.
     We use the ``two summands step'' 
        to move the dimension from $W_{i}$ to $W_{i'}$,
        arriving either at the case $W_{i'}=\overline{S}_{\bar{d},i'}$
        or $W_{i}=0$.
     Repeat the argument until there is at most one $i$
        such that $W_i \ne \overline{S}_{\bar{d},i}$
        and $W_i \ne 0$ and conclude the proof.
  \end{prf}
  The steps of the above proof are illustrated in 
     Example~\ref{ex_moving_ideal_around}, 
     while Example~\ref{ex_br_mono_4443_lower_bound} 
     shows how the proposition can be useful 
     to provide lower bounds on the border rank of monomials.

  \begin{example}\label{ex_moving_ideal_around}
     Suppose $n=3$, $S=\CC[\alpha_0,\alpha_1,\alpha_2,\alpha_3]$, and $\bar{d}= (2,3,3,4)$.
     The dimension of $\overline{S}_{\bar{d}} = S_2 \oplus S_3\oplus S_3 \oplus S_4$ is equal to 
        $85=10 + 20 + 20 + 35$.
     Suppose we are interested in a linear subspace
        $W= W_1\oplus W_2\oplus W_3\oplus W_4$ with 
        $W_1\subset S_2$, $W_2\subset S_3$, $W_3\subset S_3$, 
        and $W_4\subset S_4$ such that $\dim W = 47$ (equivalently,
        the codimension is $85-47=38$).
     We want to determine the minimal growth of the space to the next degree, that is, 
        we want $S_1\cdot W = S_1\cdot W_1 \oplus S_1 \cdot W_2\oplus S_1\cdot W_3\oplus S_1\cdot W_4$ 
        to have as small dimension as possible. 
     Equivalently, we want its codimension to be as large as possible.
            
     As an illustration, suppose for instance that $\dim W_1 =10 $, 
     $\dim W_2=17$, $\dim W_3=7$ and $\dim W_4=13$.
     Since we look for the minimal growth, we may assume that each $W_i$ is a lex-segment. 
     Then the proof of 
        Proposition~\ref{prop_Macaulay_bound_for_disjoint_modules}
        goes in the following steps:

     \medskip   
     \noindent 
     \begin{tabular}{|r|cccc|c|p{0.27\textwidth}|}
     \hline
         Step&\multicolumn{4}{r|}{Codimension of \qquad\qquad\quad\phantom{a}} & $S_1\cdot W \subset $ & action\\
         &$W_1\subset S_2$&$W_2\subset S_3$&$W_3\subset S_3$
         &$W_4\subset S_4$& $\overline{S}_{3,4,4,5}$&\\
       \hline  
       1. & $0$  & $3$  & $13$ & $22$ & $52$ & move $10$ from $W_1$ to $W_4$,\\
       2. & $10$ & $3$  & $13$ & $12$ & $57$ & move $12$ from $W_2$ to $W_4$,\\
       3. & $10$ & $15$ & $13$ & $0$  & $61$ & move $5$ from $W_2$ to $W_3$,\\
       4. & $10$ & $20$ & $8$  & $0$  & $\mathbf{65}$ & done!\\
       \hline
     \end{tabular}
     \medskip   

     That is, the maximal growth of the codimension is $65$.
     In the table above, in each row the codimension of $S_1\cdot W \subset \overline{S}_{3,4,4,5}$ is obtained as a sum of Macaulay exponents.
     For instance, in the second step, 
     $
      57 = 10^{\langle 2\rangle} +3^{\langle 3\rangle}+13^{\langle 3\rangle}+12^{\langle 4\rangle} = 20+ 3+ 19 + 15
     $.
  \end{example}
   
   \begin{example}\label{ex_br_mono_4443_lower_bound}
      Let $X=\PP^3$
         and $S=\CC[\alpha_0, \alpha_1, \alpha_2, \alpha_3]$ 
         be the Cox ring of $X$.
      Consider the monomial 
         $F=x_0^{(4)}x_1^{(4)}x_2^{(4)}x_3^{(3)}$.
      It is known that $70 \leqslant \borderrank[\PP^3]{F}\leqslant 100$:
      the lower bound is the catalecticant bound
      \cite[Thm~11.2]{landsberg_teitler_ranks_and_border_ranks_of_symm_tensors} 
      and the upper bound is \eqref{equ_upper_bound_monos}.
      Using Proposition~\ref{prop_Macaulay_bound_for_disjoint_modules}, 
      we can show that $\borderrank[\PP^3]{F} \geqslant 86$, halving the range of possible border rank values.
      Indeed, consider $\apolar_8$ and $\apolar_9$. 
       Note that 
       \[
          \apolar_8 = \alpha_0^5\cdot S_3\oplus\alpha_1^5\cdot S_3
          \oplus\alpha_2^5\cdot S_3\oplus\alpha_3^4\cdot S_4 \simeq \overline{S}_{(3,3,3,4)}
       \]
       and $\dim\apolar_8 = 3\cdot 20 + 35 = 95$, while $\dim S_8 =165$.
       Similarly, $\dim\apolar_9=158= 3\cdot 35 + 56 -3$
       (the $-3$ is due to three syzygies between generators of $\apolar$ in degree~$9$:
       $\alpha_i^5 \cdot \alpha_3^4$ can be obtained in two ways for each $i\in \set{0,1,2}$) and  $\dim S_9 =220$.
       
       Suppose on the contrary that $\borderrank{F}\leqslant 85$.
       Then by move-fit (Corollary~\ref{cor_monomials})
         there is a monomial ideal $I\subset \apolar$
         with $\dim\ (S/I)_8=\dim\ (S/I)_9=85$.
       Therefore $\codim\!\left(I_8 \subset \apolar_8\right) = 15$ and  $\codim\!\left(I_9 \subset \apolar_9\right) = 23$.
       Set $W$ to be  $I_8$ viewed as a subspace of $\overline{S}_{(3,3,3,4)}$. 
       Note that since $I$ is a monomial ideal, the space $W$ satisfies
       the assumption of Proposition~\ref{prop_Macaulay_bound_for_disjoint_modules},
       $W=W_1\oplus W_2 \oplus W_3 \oplus W_4$ 
         with $W_i \subset \overline{S}_{(3,3,3,4), i}$.
       By Proposition~\ref{prop_Macaulay_bound_for_disjoint_modules}
         we have $\codim\!\left(S_1\cdot W \subset \overline{S}_{(4,4,4,5)}\right)
          \leqslant 15^{\langle 3 \rangle} = 22$.
       The map 
       \[
         \overline{S}_{(4,4,4,5)} \to \apolar_9 = \alpha_0^5\cdot S_4 +\alpha_1^5\cdot S_4
          +\alpha_2^5\cdot S_4+\alpha_3^4\cdot S_5
       \]
       is surjective, thus the image of $S_1\cdot W$ under this map also has codimension at most $22$ in $\apolar_9$.
       Since the image of $S_1 \cdot W$ is contained in $I_9$,
         it follows that 
       \[
         \codim \left(I_9\subset \apolar_9\right)\leqslant 22,
       \]
       a contradiction.
       This concludes the proof  that $\borderrank[\PP^3]{x_0^{(4)}x_1^{(4)}x_2^{(4)}x_3^{(3)}} \geqslant 86$.
   \end{example}

  \begin{thm}\label{thm_monomials}
     Suppose $X= \PP^2\times Y$ with a smooth toric projective variety $Y$, and 
     $\widetilde{S}[X] = \CC[x_0, x_1, x_2, \  y_1, \dotsc, y_{n+w-3}]$ 
       with the first three variables corresponding to $\PP^2$
       and the remaining variables corresponding to $Y$.
     Fix two monomials,
     $F= x_0^{(a_0)}x_1^{(a_1)}x_2^{(a_2)}$ 
       with $a_0\geqslant a_1 \geqslant a_2$ 
       and $G\in \widetilde{S}[Y] = \CC[y_1, \dotsc, y_{n+w-3}]$. 
     Fix also a degree $D\in \Pic(Y)$ such that $\apolar[G]_D = 0$.
     Then 
     \[
       \borderrank{F\cdot G} \geqslant (a_1+1) \cdot (a_2+1) \cdot \dim H^0(Y, D).
     \]
 \end{thm}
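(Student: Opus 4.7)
My plan is to argue by contradiction, assuming $r := \borderrank{F \cdot G} \leqslant (a_1+1)(a_2+1)N - 1$ where $N := \dim H^0(Y,D)$. I will first invoke the Fixed Ideal Theorem (Theorem~\ref{thm_G_invariant_bVSP}) with the two-dimensional maximal torus $B$ of $\Aut(\PP^2)$, acting trivially on the $Y$ factor of $X = \PP^2 \times Y$; since $F$ is a monomial, $[F\cdot G]\in \PPof{H^0(L)^*}$ is $B$-fixed and the theorem produces a $B$-invariant ideal $I\in\bVSP(F\cdot G, r)$. Torus invariance yields the decomposition $I_{(e,D)} = \bigoplus_{|\alpha|=e} \alpha\otimes W_\alpha$ indexed by degree-$e$ monomials $\alpha$ in $S[\PP^2]$, with subspaces $W_\alpha\subset S[Y]_D$.

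Next I will exploit the assumption $\apolar[G]_D = 0$: it makes $\hook G \colon S[Y]_D \to \widetilde{S}[Y]_{L_G-D}$ injective, and the tensor-product structure of $\hook$ then gives $\apolar[F\cdot G]_{(e,D)} = \apolar[F]_e \otimes S[Y]_D$. The inclusion $I_{(e,D)} \subseteq \apolar[F\cdot G]_{(e,D)}$ therefore forces $W_\alpha = 0$ whenever $\alpha \notin \apolar[F]$. The ideal property of $I$ translates into the monotonicity $W_\alpha \subseteq W_{\alpha \cdot \alpha_i}$ for $i=0,1,2$, and membership in $\Slip_{r,X}$ enforces $\sum_{|\alpha|=e} \dim W_\alpha = \max(\binom{e+2}{2}N - r,\, 0)$ for every $e\geqslant 0$.

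To derive a contradiction I will stratify by dimension: for $k=1,\dots,N$ set $J_k := \langle\alpha : \dim W_\alpha \geqslant k\rangle \subseteq S[\PP^2]$. Each $J_k$ is a torus-invariant (hence monomial) ideal contained in $\apolar[F] = (\alpha_0^{a_0+1},\alpha_1^{a_1+1},\alpha_2^{a_2+1})$; they are nested $J_1\supseteq\dots\supseteq J_N$; the vanishing $I_{(e,D)} = 0$ (valid whenever $\binom{e+2}{2}N\leqslant r$) forces each $J_k$ to have no generators in such small degrees; and together they satisfy $\sum_k \dim(S/J_k)_e = \min(r,\binom{e+2}{2}N)$. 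The key step is then to show that each $J_k$ satisfies $\dim(S/J_k)_e \geqslant (a_1+1)(a_2+1)$ asymptotically; summing over $k$ will yield $r\geqslant N(a_1+1)(a_2+1)$, contradicting the assumption.

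The main obstacle is this last combinatorial $\PP^2$-statement. The plan is to apply Proposition~\ref{prop_Macaulay_bound_for_disjoint_modules} iteratively to control the Hilbert function of $S/J_k$ across successive $\PP^2$-degrees, exploiting that $J_k \subseteq (\alpha_0^{a_0+1},\alpha_1^{a_1+1},\alpha_2^{a_2+1})$ with no low-degree generators forces the monomial complement of $J_k$ to extend to infinity in the $\alpha_0$-direction (where the exponent is largest) with transverse cross-section at least $(a_1+1)(a_2+1)$. The delicate point will be ruling out $J_k$ whose complement extends only along a narrower sub-region (such as the $\alpha_0$-axis alone, which would give cross-section just $1$); this is where Proposition~\ref{prop_Macaulay_bound_for_disjoint_modules} combined with the low-degree vanishing will do the heavy lifting, using the nesting $J_k \supseteq J_{k+1}$ to propagate the bound uniformly in $k$.
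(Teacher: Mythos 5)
Your setup is sound — invoking the Fixed Ideal Theorem (or the paper's monomial version, Corollary~\ref{cor_monomials}), observing that $\apolar[G]_D = 0$ forces $\apolar[F\cdot G]_{(e,D)} = \apolar[F]_e \otimes S[Y]_D$, and stratifying $I_{(e,D)} = \bigoplus_\alpha \alpha\otimes W_\alpha$ by the monomial ideals $J_k = \langle \alpha : \dim W_\alpha \geqslant k\rangle$. The bookkeeping identity $\sum_{k=1}^N \dim(S/J_k)_e = \min(r, N\binom{e+2}{2})$ is correct. But the ``key step'' on which everything rests --- that \emph{each individual} $J_k$ satisfies $\dim (S/J_k)_e \geqslant (a_1+1)(a_2+1)$ asymptotically --- is not a consequence of the hypotheses you list, and in fact fails as a standalone claim. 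Take $a_0=a_1=a_2=1$, so $(a_1+1)(a_2+1)=4$; the ``low-degree vanishing'' only covers $e\leqslant 1$ (since $\binom{3}{2}=3<4$ but $\binom{4}{2}=6\geqslant 4$), while $\apolar[F]=(\alpha_0^2,\alpha_1^2,\alpha_2^2)$ already has its generators in degree~$2$, so nothing prevents $J_k=\apolar[F]$, for which $\dim(S/J_k)_e=0$ for all $e\geqslant 4$. The same happens whenever $\binom{a_2+3}{2}\geqslant (a_1+1)(a_2+1)$, e.g.\ $a_1=a_2\leqslant 2$. So the per-$k$ bound cannot be proved one $k$ at a time from monomiality, containment in $\apolar[F]$, and low-degree vanishing; the only obstruction to such a $J_k$ is the coupled sum condition and nesting with the other $J_{k'}$, which you would have to feed into a Macaulay-type argument jointly across all $k$. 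That coupling is exactly what the paper does, but more economically: rather than tracking $N$ individual ideals over all degrees $e$, the paper aggregates the whole of $I_{(a_1+a_2,D)}$ into one module $W\subset\overline{S}_{\bar d}$ (with $\bar d$ a length-$3N$ tuple repeating the generator degrees of $\apolar[F]$ $N$ times) and applies Proposition~\ref{prop_Macaulay_bound_for_disjoint_modules} \emph{once}, at the single degree jump $(a_1+a_2,D)\to(a_1+a_2+1,D)$. The choice of this specific degree is what makes $\overline{\Lex}_{\bar d}^{r'}$ degenerate to ``all of the $S_{a_1-1}$ and $S_{a_2-1}$ summands plus one extra monomial'', giving the needed codimension bound $r-2<r$ directly. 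To rescue your plan you would either have to rediscover this aggregated one-shot argument, or else state and prove a much stronger coupled combinatorial lemma about nested families of monomial ideals sharing a Hilbert-function budget, which is substantially more work than the sketch suggests.
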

   
 \begin{prf}
    Suppose by way of contradiction that 
    \[
      \borderrank{F\cdot G} \leqslant r:= (a_1+1) \cdot (a_2+1) 
         \cdot \dim H^0(Y, D) - 1.
    \]
    We will be considering the degree 
      $(a_1+a_2, D) \in \Pic(X) = \Pic(\PP^2 \times Y)
      =\ZZ\oplus \Pic(Y)$.
    By Corollary~\ref{cor_monomials},
    there exists a linear subspace 
    $W \subset \apolar[F\cdot G]_{(a_1+a_2, D)}$ 
    spanned by monomials
        such that
    \begin{equation}\label{equ_need_to_show_for_monomials}
        \codim \left(W\subset S[X]_{(a_1+a_2, D)}\right) = r 
        \text{ and }
        \codim \left( W \cdot S[\PP^2]_1
              \subset S[X]_{(a_1+a_2+1, D)}\right) \geqslant r.
    \end{equation}
    We will show this is impossible. Note that 
    \begin{align*}
      \apolar[F\cdot G]_{(a_1+a_2, D)} =&
      \ \  \ \
      \alpha_0^{a_0+1} \cdot S[\PP^2]_{a_1+a_2-a_0-1} \otimes H^0(Y,D)\\
      &\oplus 
      \alpha_1^{a_1+1} \cdot S[\PP^2]_{a_2-1} \otimes H^0(Y,D)\\
      &\oplus
      \alpha_2^{a_2+1} \cdot S[\PP^2]_{a_1-1} \otimes H^0(Y,D)
    \end{align*}
    and an analogous equality holds for 
    $\apolar[F\cdot G]_{(a_1+a_2+1, D)}$. 
    (If $a_1+a_2-a_0-1 < 0$, then we skip it in the above formula
    and we also skip the corresponding parts in the proof below.)
    Thus we are in the situation of 
    Proposition~\ref{prop_Macaulay_bound_for_disjoint_modules} 
    for $n=2$, $j=3\dim H^0(Y,D)$ and 
    \[
      \bar{d}= (\underbrace{a_1+a_2-a_0-1,\dotsc,a_1+a_2-a_0-1}_{\dim H^0(Y,D) \text{ times}},
      \underbrace{a_2-1,\dotsc,a_2-1}_{\dim H^0(Y,D) \text{ times}},
      \underbrace{a_1-1,\dotsc,a_1-1}_{\dim H^0(Y,D) \text{ times}}).
    \]
    Thus 
    $\codim \left( W \cdot S[\PP^2]_1
              \subset \apolar_{(a_1+a_2+1, D)}\right) \leqslant 
     \codim \left(\overline{\Lex}_{\bar{d}}^{r'} \cdot S_1
         \subset \overline{S}_{\bar{d} + \bar{1}} \right)$,
      where 
      \[
         r'= r- \codim \left(\apolar_{(a_1+a_2, D)}\! \subset \! S[X]_{(a_1+a_2, D)}\right) = 
         \dim \left(S[\PP^2]_{a_1+a_2-a_0-1} \otimes H^0(Y,D) \right)-1.
      \]
    The explicit values of $r$ and $r'$ are such that 
       $\overline{\Lex}_{\bar{d}}^{r'}$ contains 
       all  the $S[\PP^2]_{a_1-1}$ summands,
       all  the $S[\PP^2]_{a_2-1}$ summands,
       and in addition one more monomial from one of the $S[\PP^2]_{a_1+a_2-a_0-1}$ summands.
    Thus
    \begin{align*}
       &\codim \left( W \cdot S[\PP^2]_1
              \subset S[X]_{(a_1+a_2+1, D)}\right)
       \\ &=
       \codim \left( W \cdot S[\PP^2]_1
              \subset \apolar_{(a_1+a_2+1, D)}\right)\\
       &\phantom{= } \ +      \codim \left(\apolar_{(a_1+a_2+1, D)}
              \subset  S[X]_{(a_1+a_2+1, D)}\right)
       \\ &\leqslant
            \codim \left(\overline{\Lex}_{\bar{d}}^{r'} \cdot S_1
         \subset \overline{S}_{\bar{d} + \bar{1}} \right)
         +       \codim \left(\apolar_{(a_1+a_2+1, D)}
              \subset  S[X]_{(a_1+a_2+1, D)}\right)
       \\& = r-2,
    \end{align*}
    contradicting \eqref{equ_need_to_show_for_monomials}.    
\end{prf}

In the following examples we keep the notation of 
  Theorem~\ref{thm_monomials}.
\begin{example}\label{ex_monomials_P2}
   If $Y = \set{\ast}$, then Theorem~\ref{thm_monomials} 
     proves that  the border rank of monomials in three variables 
     is equal to the upper bound of \eqref{equ_upper_bound_monos}.
\end{example}

\begin{example}\label{ex_monomials_P2xP1xP1___}
   Suppose $Y = \PP^1\times \PP^1 \times \dotsb$ is a finite product of projective lines.
   Take a monomial 
   $F = x_0^{(a_0)}x_1^{(a_1)}x_2^{(a_2)}\cdot
        y_0^{(b_0)}y_1^{(b_1)}\cdot
        z_0^{(c_0)}z_1^{(c_1)}\dotsm$ such that 
        $a_0\geqslant a_1\geqslant a_2$,
        $b_0\geqslant b_1$,
        $c_0\geqslant c_1$,\dots 
   Then 
   \[
     \borderrank[\PP^2\times \PP^1\times \PP^1 \times \dots]{F}= 
      (a_1 +1)(a_2+1)(b_1+1)(c_1+1)\dotsm.
   \]
   Indeed, use Theorem~\ref{thm_monomials}
      with $D=(b_1,c_1,\dotsc)$ to get the lower bound, 
      and Lemma~\ref{lem_monomial_upper_bound_on_general_toric_variety}
      to obtain the upper bound.
\end{example}

\section{Further research}\label{sec_further_research}

In this section we briefly discuss further research plans related to border apolarity.

\subsection{Efficiency}\label{sec_efficiency}

The method of border apolarity has already shown its potential 
  for new lower bounds for border rank.
However, it needs to be determined how much further we can work with this method.
Originally, it was intended to uniformly describe ``wild'' cases, 
where the border rank is less than the smoothable rank, as briefly discussed in Sections~\ref{sec_tensor_of_border_rank_3} 
  and \ref{sec_wild_cubic}, and in more detail 
  in \cite{nisiabu_jabu_smoothable_rank_example}.
  
Many (or all) classical and modern criteria for the border rank are cursed with
the ``cactus barriers'', that is, they cannot provide bounds for the  border rank
beyond the border cactus rank,
  which (for $X$ of large dimension) 
  is much lower than the border rank. 
Thus the natural question is whether the present method
  is also subject to these cactus barriers.
Seemingly, the immediate answer is negative,
  as the statement of Theorem~\ref{thm_nonsaturated_apolarity} 
  is ``if and only if''.
However, there is a catch, as at this moment we do not have enough criteria to determine if an ideal $I\in \Hilb_{S}^{h_r}$ is contained in $\Slip_{r, X}$ or not. 
Analogously, in the applications discussed in 
   Sections~\ref{sec_applications}, \ref{sec_monomials} and \cite{conner_harper_landsberg_border_apolarity_I} 
   only one implication of Theorem~\ref{thm_nonsaturated_apolarity} is used:
   if there is no ideal $I\in \Hilb_{S}^{h_r}$ 
   such that $I \subset \apolar$, then $\borderrank{F} >r$.
With this simplified approach, 
   the cactus curse is still (partially) valid,
   as we discuss in Subsection~\ref{sec_other_ranks}.
However, in Subsection~\ref{sec_smoothable_ideals} 
   we also briefly report on research in progress
   on conditions for ideals to be 
   in $\Slip_{r, X}$ or not.
It is a subject of ongoing joint work with Landsberg and his research group
  to determine if these methods combined with the study of smoothability 
  of finite schemes can beat the curse of cactus barriers. 

\subsection{Other variants of rank}\label{sec_other_ranks}

In the standard notation 
(as in Section~\ref{sec_multihomogeneous_coordinates}) 
for $X$ and $L$, if $W\subset H^0(X,L)^*$ is a linear subspace,
   then the \emph{rank} of $W$ is the minimal integer $r$
   such that $\PP(W)\subset \linspan{\fromto{p_1}{p_r}}$ 
   for some $p_i\in X\subset \PPof{H^0(X,L)^*}$.
This is sometimes referred to as the  \emph{simultaneous rank} of $W$.
It also has its border analogue: $W$ is of border rank at most $r$ 
   if and only if it is a limit of linear subspaces
   (of the same dimension) which have rank $r$. 
See for instance \cite{landsberg_jabu_ranks_of_tensors},
\cite{jabu_postinghel_rupniewski_Strassen_for_small_tensors}
for more details about these notions. 

It is straightforward to generalise apolarity 
  (Proposition~\ref{prop_multigraded_apolarity})
  and border apolarity 
  (Theorem~\ref{thm_nonsaturated_apolarity})
  to the simultaneous case, and we discuss it in detail in 
  a forthcoming paper.
  
Other variants of rank are the \emph{cactus rank} 
   and \emph{border cactus rank}:
   see \cite{ranestad_schreyer_on_the_rank_of_a_symmetric_form}, 
       \cite{nisiabu_jabu_cactus},
       \cite{nisiabu_jabu_smoothable_rank_example},
       \cite{bernardi_brachat_mourrain_comparison}, 
       \cite{galazka_vb_cactus}, and other related work.
Roughly, the cactus rank arises from linear spans of 
   finite subschemes of $X$ that are not necessarily smooth
   (that is, not necessarily equal to the disjoint union of 
    reduced points).
It is known (see the references above) 
   that most determinantal lower bounds for border rank 
   are in fact lower bounds for border cactus rank, and the latter tends to be much lower than the former.
Thus it is desirable to construct bounds for border rank
   that do not apply for cactus rank.

Our bound arising from border apolarity can also be modified to similar statements for border cactus rank 
 (and also simultaneous border cactus rank).
As a consequence, without taking into account the discussion
  of  membership in $\Slip_{r,X}$,
  it might be hard to use border apolarity 
  to distinguish between border rank and border cactus rank.
Yet the arguments leading to the generalisation
   and even the formulation of the border-cactus-apolarity are not so 
   straightforward 
   and they require using the techniques announced 
   in Subsection~\ref{sec_saturation_open}.
Again, the details will be provided in a forthcoming work.
       
\subsection{Limit ideals of points}\label{sec_smoothable_ideals}

There are four types of irreducible 
   components in
   the multigraded Hilbert scheme $\Hilb_S^{h_r}$.
Two of the types have as a general member a saturated ideal,
   while the remaining two types only consist of non-saturated ideals.
In the other direction, we ask if the scheme defined by
   the general ideal is reduced or not.
Combining these two properties we get our four types, 
   including one type whose general ideal is saturated and defines a reduced subscheme.
   This type consists of a unique component, 
      namely $\Slip_{r,X}$.

Given the discussion in 
Subsections~\ref{sec_efficiency} and \ref{sec_other_ranks}
  and the relations of the four types to border apolarity and border-cactus-apolarity,
it seems critical to learn how to distinguish the four types.
In particular, for $I\in \Hilb_{S}^{h_r}$ we have to decide:
\begin{itemize}
 \item Is $I$ a limit of ideals whose saturation is radical?
 \item Is $I$ a limit of saturated ideals?  
\end{itemize}
If $I\in \Slip_{r,X}$, then certainly the answer to both above questions is affirmative. 
However, at this moment, we do not know if the converse holds.
The first item is an intensively studied topic of 
   \emph{smoothability of finite schemes}; 
   see for instance \cite{cartwright_erman_velasco_viray_Hilb8},
   \cite{casnati_jelisiejew_notari_Hilbert_schemes_via_ray_families},
   \cite{jelisiejew_PhD},
   \cite{jabu_jeliesiejew_finite_schemes_and_secants}, and
   \cite{douvropoulos_jelisiejew_nodland_teitler_11_points_on_A3}.
The second question is new and research in this direction 
   is being conducted by Ma{\'n}dziuk.

\subsection{Open locus of points in general position}\label{sec_saturation_open}

In Propositions~\ref{prop_saturated_is_open},
   \ref{prop_motivation_for_h_r_X}, and also in
   Lemma~\ref{lem_hilb_function_of_very_general_tuple},
   we show that appropriate subsets are dense or empty. 
A stronger claim is in fact true: these subsets 
   (for instance $\Sip_{r,X} \subset \Slip_{r,X}$) 
   are all Zariski open. 
This is fairly standard when $X=\PP^n$, 
   but in order to show it in general 
   (for any smooth projective toric variety $X$),
   we need an algebraic statement,
   that the set of saturated fibres in a flat family of ideals 
   is Zariski open.
A proof of this claim was communicated to us by 
   Jelisiejew and it is both a very interesting observation 
   on its own,
   and also relevant to the proofs of claims
   about border cactus rank 
   and the corresponding apolarity theory.

Again, this will be detailed in a forthcoming work.   

\subsection{Other base fields}\label{sec_other_base_fields}

We also claim that the results of our article can be extended
   to any other algebraically closed base field $\kk$.
There are two issues that should be resolved.

The first problem is the lack of solid reference for 
  Cox rings of toric varieties over base fields $\kk\ne \CC$.
The general consensus among experts is that ``everything works fine'' 
  and ``toric varieties are defined and can be studied without 
  any problem over $\Spec \ZZ$''.
However, this claim is not properly documented and 
  some subtle issues may appear. 
For instance, whenever the class group has torsion 
  of the same order as the characteristic of the base field, 
  quotients by non-reduced group schemes are necessary to consider
  the quotient construction \cite[\S5.1]{cox_book}.
But this will never happen in the setting of smooth projective 
  toric varieties, and in the special cases of interest, 
  namely Segre--Veronese varieties, the ideal--subscheme 
  correspondence is straightforward and clear.
Moreover, more general cases
  are treated 
  in~\cite[Thm~(4.4.3)]{rohrer_qcoh_sheaves_on_toric_schemes}.
  
  Another issue
  is the ``very general'' property.
Over the complex numbers this is a dense property, 
  but over countable fields $\kk$ it may very well mean 
  just an empty set.
This is again dealt with by the Zariski openness of
  the set of saturated fibres
  (mentioned in Subsection~\ref{sec_saturation_open}).
  
Details of this approach will be provided in our forthcoming work.

\subsection{Varieties of sums of powers}

In the notation of Section~\ref{sec_VSP},
   looking at $\bVSP(F,r)$ might help 
   to (partially) resolve the singularities appearing 
   in $\mathit{VSP}(F,r)$.
   Since $\bVSP(F,r)$ is defined in a natural way, 
     it may be easier to study in cases of interest.
   In fact, in the course of proofs about $\mathit{VSP}(F,r)$
     existing so far, one of the key technical steps
     implies the claim 
     (implicitly, as the terminology was not present at that time)
     that $\mathit{VSP}(F,r)$ is equal to $\bVSP(F,r)$:
     see for instance 
     \cite[Prop.~3.1]{ranestad_voisin_VSP_and_divisors_in_the_moduli_of_cubic_fourfolds}.
     
It is an interesting open problem 
  to investigate 
  the details of the interaction between the two varieties $\mathit{VSP}(F,r)$ and $\bVSP(F,r)$.
In particular, this question includes 
  understanding the conditions on $F$, 
  and also on $X$ and $L$, 
  where $F\in \PPof{H^0(X, L)^*}$,
  that force the two varieties to be isomorphic.

\bibliography{apolarity-for-border-rank}

\def\polhk#1{\setbox0=\hbox{#1}%
  {\ooalign{\hidewidth\lower1.5ex\hbox{`}\hidewidth\crcr\unhbox0}}}\def\dbar{\leavevmode\hbox
  to 0pt{\hskip.2ex\accent"16\hss}d}
\begin{thebibliography}{BBKT15}

\bibitem[BB13a]{ballico_bernardi_4th_secant_of_Veronese}
Edoardo Ballico and Alessandra Bernardi.
\newblock Stratification of the fourth secant variety of {V}eronese varieties
  via the symmetric rank.
\newblock {\em Adv. Pure Appl. Math.}, 4(2):215--250, 2013.

\bibitem[BB13b]{brown_jabu_maps_of_toric_varieties}
Gavin Brown and Jaros\l{}aw Buczy\'nski.
\newblock Maps of toric varieties in {C}ox coordinates.
\newblock {\em Fund. Math.}, 222:213--267, 2013.

\bibitem[BB14]{nisiabu_jabu_cactus}
Weronika Buczy{\'n}ska and Jaros{\l}aw Buczy{\'n}ski.
\newblock Secant varieties to high degree {V}eronese reembeddings,
  catalecticant matrices and smoothable {G}orenstein schemes.
\newblock {\em J. Algebraic Geom.}, 23:63--90, 2014.

\bibitem[BB15]{nisiabu_jabu_smoothable_rank_example}
Weronika Buczy{\'n}ska and Jaros{\l}aw Buczy{\'n}ski.
\newblock On differences between the border rank and the smoothable rank of a
  polynomial.
\newblock {\em Glasg. Math. J.}, 57(2):401--413, 2015.

\bibitem[BBKT15]{nisiabu_jabu_kleppe_teitler_direct_sums}
Weronika Buczy\'nska, Jaros{\l}aw Buczy\'nski, Johannes Kleppe, and Zach
  Teitler.
\newblock Apolarity and direct sum decomposability of polynomials.
\newblock {\em Michigan Math. J.}, 64(4):675--719, 2015.

\bibitem[BBM14]{bernardi_brachat_mourrain_comparison}
Alessandra Bernardi, J\'{e}r\^{o}me Brachat, and Bernard Mourrain.
\newblock A comparison of different notions of ranks of symmetric tensors.
\newblock {\em Linear Algebra Appl.}, 460:205--230, 2014.

\bibitem[BBT13]{nisiabu_jabu_teitler_Waring_decompositions_of_monomials}
Weronika Buczy{\'n}ska, Jaros{\l}aw Buczy{\'n}ski, and Zach Teitler.
\newblock Waring decompositions of monomials.
\newblock {\em J. Algebra}, 378:45--57, 2013.

\bibitem[BCP97]{magma}
Wieb Bosma, John Cannon, and Catherine Playoust.
\newblock The {M}agma algebra system. {I}. {T}he user language.
\newblock {\em J. Symbolic Comput.}, 24(3-4):235--265, 1997.
\newblock Computational algebra and number theory (London, 1993). Available for
  use on-line at \url{http://magma.maths.usyd.edu.au/calc/}.

\bibitem[BGI11]{bernardi_gimigliano_ida}
Alessandra Bernardi, Alessandro Gimigliano, and Monica Id{\`a}.
\newblock Computing symmetric rank for symmetric tensors.
\newblock {\em J. Symbolic Comput.}, 46(1):34--53, 2011.

\bibitem[BH93]{bruns_herzog_Cohen_Macaulay_rings}
Winfried Bruns and J\"{u}rgen Herzog.
\newblock {\em Cohen-{M}acaulay rings}, volume~39 of {\em Cambridge Studies in
  Advanced Mathematics}.
\newblock Cambridge University Press, Cambridge, 1993.

\bibitem[BJ17]{jabu_jeliesiejew_finite_schemes_and_secants}
Jaros{\l}aw Buczy\'{n}ski and Joachim Jelisiejew.
\newblock Finite schemes and secant varieties over arbitrary characteristic.
\newblock {\em Differential Geom. Appl.}, 55:13--67, 2017.

\bibitem[BL13]{landsberg_jabu_ranks_of_tensors}
Jaros{\l}aw Buczy{\'n}ski and Joseph~M. Landsberg.
\newblock Ranks of tensors and a generalization of secant varieties.
\newblock {\em Linear Algebra Appl.}, 438(2):668--689, 2013.

\bibitem[BL14]{landsberg_jabu_third_secant}
Jaros\l~aw Buczy\'{n}ski and J.~M. Landsberg.
\newblock On the third secant variety.
\newblock {\em J. Algebraic Combin.}, 40(2):475--502, 2014.

\bibitem[Bore91]{borel}
Armand Borel.
\newblock {\em Linear algebraic groups}, volume 126 of {\em Graduate Texts in
  Mathematics}.
\newblock Springer-Verlag, New York, second edition, 1991.

\bibitem[BPR20]{jabu_postinghel_rupniewski_Strassen_for_small_tensors}
Jaros{\l}aw Buczy\'{n}ski, Elisa Postinghel, and Filip Rupniewski.
\newblock On {S}trassen's rank additivity for small three-way tensors.
\newblock {\em SIAM J. Matrix Anal. Appl.}, 41(1):106--133, 2020.

\bibitem[CCG12]{carlini_catalisano_geramita_Waring_rank_of_monomials}
Enrico Carlini, Maria~Virginia Catalisano, and Anthony~V. Geramita.
\newblock The solution to the {W}aring problem for monomials and the sum of
  coprime monomials.
\newblock {\em J. Algebra}, 370:5--14, 2012.

\bibitem[CEVV09]{cartwright_erman_velasco_viray_Hilb8}
Dustin~A. Cartwright, Daniel Erman, Mauricio Velasco, and Bianca Viray.
\newblock Hilbert schemes of 8 points.
\newblock {\em Algebra Number Theory}, 3(7):763--795, 2009.

\bibitem[Chia04]{chiantini_lectures_on_structure_of_projective_embeddings}
L.~Chiantini.
\newblock Lectures on the structure of projective embeddings.
\newblock {\em Rend. Sem. Mat. Univ. Politec. Torino}, 62(4):335--388, 2004.

\bibitem[CHL19]{conner_harper_landsberg_border_apolarity_I}
Austin Conner, Alicia Harper, and Joseph~M. Landsberg.
\newblock Border apolarity of tensors {I}: new lower bounds for matrix
  multiplication and $\operatorname{det}_3$.
\newblock arXiv:1911.07981, 2019.

\bibitem[CJN15]{casnati_jelisiejew_notari_Hilbert_schemes_via_ray_families}
Gianfranco Casnati, Joachim Jelisiejew, and Roberto Notari.
\newblock Irreducibility of the {G}orenstein loci of {H}ilbert schemes via ray
  families.
\newblock {\em Algebra Number Theory}, 9(7):1525--1570, 2015.

\bibitem[CLS11]{cox_book}
David~A. Cox, John~B. Little, and Henry~K. Schenck.
\newblock {\em Toric varieties}, volume 124 of {\em Graduate Studies in
  Mathematics}.
\newblock American Mathematical Society, Providence, RI, 2011.

\bibitem[Cox95]{cox_homogeneous}
David~A. Cox.
\newblock The homogeneous coordinate ring of a toric variety.
\newblock {\em J. Algebraic Geom.}, 4(1):17--50, 1995.

\bibitem[DJNT17]{douvropoulos_jelisiejew_nodland_teitler_11_points_on_A3}
Theodosios Douvropoulos, Joachim Jelisiejew, Bernt Ivar~Utst{\o}l N{\o}dland,
  and Zach Teitler.
\newblock The {H}ilbert scheme of 11 points in {$\mathbb{A}^3$} is irreducible.
\newblock In {\em Combinatorial algebraic geometry}, volume~80 of {\em Fields
  Inst. Commun.}, pages 321--352. Fields Inst. Res. Math. Sci., Toronto, ON,
  2017.

\bibitem[DT15]{derksen_teitler_lower_bounds_for_ranks_of_invariant_forms}
Harm Derksen and Zach Teitler.
\newblock Lower bound for ranks of invariant forms.
\newblock {\em J. Pure Appl. Algebra}, 219(12):5429--5441, 2015.

\bibitem[Frie13]{friedland_salmon_problem_paper}
Shmuel Friedland.
\newblock On tensors of border rank {$l$} in {$\mathbb{C}^{m\times n\times
  l}$}.
\newblock {\em Linear Algebra Appl.}, 438(2):713--737, 2013.

\bibitem[Ga{\l}{\k a}14]{galazka_mgr}
Maciej Ga{\l}{\k a}zka.
\newblock Multigraded apolarity.
\newblock Master's thesis, Institute of Mathematics, Warsaw University, 2014.
\newblock
  \url{https://www.mimuw.edu.pl/~jabu/teaching/Theses/MSc_thesis_Galazka.pdf},
  arxiv:1601.06211.

\bibitem[Ga{\l}{\k a}17]{galazka_vb_cactus}
Maciej Ga{\l}{\k a}zka.
\newblock Vector bundles give equations of cactus varieties.
\newblock {\em Linear Algebra Appl.}, 521:254--262, 2017.

\bibitem[Gree98]{green_generic_initial_ideals}
Mark~L. Green.
\newblock Generic initial ideals.
\newblock In {\em Six lectures on commutative algebra}, volume 166 of {\em
  Progress in Mathematics}, pages 119--186. Birkh\"auser Verlag, Basel, 1998.

\bibitem[GRV18]{gallet_ranestad_villamizar}
Matteo Gallet, Kristian Ranestad, and Nelly Villamizar.
\newblock Varieties of apolar subschemes of toric surfaces.
\newblock {\em Ark. Mat.}, 56(1):73--99, 2018.

\bibitem[Harr95]{harris}
Joe Harris.
\newblock {\em Algebraic geometry}, volume 133 of {\em Graduate Texts in
  Mathematics}.
\newblock Springer-Verlag, New York, 1995.
\newblock A first course, Corrected reprint of the 1992 original.

\bibitem[Hart77]{hartshorne}
Robin Hartshorne.
\newblock {\em Algebraic geometry}.
\newblock Springer-Verlag, New York, 1977.
\newblock Graduate Texts in Mathematics, No. 52.

\bibitem[HMV19]{huang_michalek_ventura_wild_forms}
Hang Huang, Mateusz Micha{\l}ek, and Emanuele Ventura.
\newblock Vanishing {H}essian, wild forms and their border {VSP}.
\newblock arXiv:1912.13174, 2019.

\bibitem[HS04]{haiman_sturmfels_multigraded_Hilb}
Mark Haiman and Bernd Sturmfels.
\newblock Multigraded {H}ilbert schemes.
\newblock {\em J. Algebraic Geom.}, 13(4):725--769, 2004.

\bibitem[IK99]{iarrobino_kanev_book_Gorenstein_algebras}
Anthony Iarrobino and Vassil Kanev.
\newblock {\em Power sums, {G}orenstein algebras, and determinantal loci},
  volume 1721 of {\em Lecture Notes in Mathematics}.
\newblock Springer-Verlag, Berlin, 1999.
\newblock Appendix C by Iarrobino and Steven L. Kleiman.

\bibitem[IR01]{iliev_ranestad_K3_of_genus_8_and_VSP}
Atanas Iliev and Kristian Ranestad.
\newblock {$K3$} surfaces of genus 8 and varieties of sums of powers of cubic
  fourfolds.
\newblock {\em Trans. Amer. Math. Soc.}, 353(4):1455--1468, 2001.

\bibitem[Jeli17]{jelisiejew_PhD}
Joachim Jelisiejew.
\newblock {\em Hilbert schemes of points and their applications}.
\newblock PhD thesis, University of Warsaw, 2017.
\newblock
  \url{https://www.mimuw.edu.pl/~jabu/teaching/Theses/PhD_Thesis_Jelisiejew.pdf}.

\bibitem[Land17]{landsberg_geometry_and_complexity}
Joseph~M. Landsberg.
\newblock {\em Geometry and complexity theory}, volume 169 of {\em Cambridge
  Studies in Advanced Mathematics}.
\newblock Cambridge University Press, Cambridge, 2017.

\bibitem[LM17]{landsberg_michalek_geometry_of_border_rank_decompositions}
Joseph~M. Landsberg and Mateusz Micha{\l}ek.
\newblock On the geometry of border rank decompositions for matrix
  multiplication and other tensors with symmetry.
\newblock {\em SIAM J. Appl. Algebra Geom.}, 1(1):2--19, 2017.

\bibitem[LT10]{landsberg_teitler_ranks_and_border_ranks_of_symm_tensors}
Joseph~M. Landsberg and Zach Teitler.
\newblock On the ranks and border ranks of symmetric tensors.
\newblock {\em Found. Comput. Math.}, 10(3):339--366, 2010.

\bibitem[Oedi19]{oeding_border_rank_monomials}
Luke Oeding.
\newblock Border ranks of monomials.
\newblock arXiv:1608.02530, 2019.

\bibitem[Rohr14]{rohrer_qcoh_sheaves_on_toric_schemes}
Fred Rohrer.
\newblock Quasicoherent sheaves on toric schemes.
\newblock {\em Expo. Math.}, 32(1):33--78, 2014.

\bibitem[RS00]{ranestad_schreyer_VSP}
Kristian Ranestad and Frank-Olaf Schreyer.
\newblock Varieties of sums of powers.
\newblock {\em J. Reine Angew. Math.}, 525:147--181, 2000.

\bibitem[RS11]{ranestad_schreyer_on_the_rank_of_a_symmetric_form}
Kristian Ranestad and Frank-Olaf Schreyer.
\newblock On the rank of a symmetric form.
\newblock {\em J. Algebra}, 346:340--342, 2011.

\bibitem[Russ16]{russo_geometry_of_special_varieties}
Francesco Russo.
\newblock {\em On the geometry of some special projective varieties}, volume~18
  of {\em Lecture Notes of the Unione Matematica Italiana}.
\newblock Springer, Cham; Unione Matematica Italiana, Bologna, 2016.

\bibitem[RV17]{ranestad_voisin_VSP_and_divisors_in_the_moduli_of_cubic_fourfolds}
Kristian Ranestad and Claire Voisin.
\newblock Variety of power sums and divisors in the moduli space of cubic
  fourfolds.
\newblock {\em Doc. Math.}, 22:455--504, 2017.

\bibitem[Teit14]{teitler_lower_bound_for_generalized_ranks}
Zach Teitler.
\newblock Geometric lower bounds for generalized ranks.
\newblock arXiv: 1406.5145, 2014.

\bibitem[Zak93]{zak_tangents}
F.~L. Zak.
\newblock {\em Tangents and secants of algebraic varieties}, volume 127 of {\em
  Translations of Mathematical Monographs}.
\newblock American Mathematical Society, Providence, RI, 1993.
\newblock Translated from the Russian manuscript by the author.

\end{thebibliography}
\bibliographystyle{alpha_four}

\end{document}